\crefname{equation}{}{}
\numberwithin{equation}{section}
\numberwithin{figure}{section}
\theoremstyle{plain}
\newtheorem{thm}{\protect\theoremname}[section]
\theoremstyle{plain}
\newtheorem{assumption}[thm]{\protect\assumptionname}
\theoremstyle{definition}
\newtheorem{defn}[thm]{\protect\definitionname}
\theoremstyle{remark}
\newtheorem{rem}[thm]{\protect\remarkname}
\theoremstyle{plain}
\newtheorem{prop}[thm]{\protect\propositionname}
\theoremstyle{plain}
\newtheorem{lem}[thm]{\protect\lemmaname}
\theoremstyle{remark}
\newtheorem*{acknowledgement*}{\protect\acknowledgementname}
\let\std@footnotetext\@footnotetext
\let\@footnotetext\std@footnotetext
\providecommand{\acknowledgementname}{Acknowledgement}
\providecommand{\assumptionname}{Assumption}
\providecommand{\definitionname}{Definition}
\providecommand{\lemmaname}{Lemma}
\providecommand{\propositionname}{Proposition}
\providecommand{\remarkname}{Remark}
\providecommand{\theoremname}{Theorem}
\begin{document}
\title[The Dirichlet problem for nonlinear diffusion equations]{Entropy solutions to the Dirichlet problem for nonlinear diffusion
equations with conservative noise}
\author{Kai Du}
\address[K. Du]{Shanghai Center for Mathematical Sciences, Fudan University, Shanghai 200438, China.}
\email{kdu@fudan.edu.cn}

\author{Ruoyang Liu}
\address[R.Liu]{Corresponding author. School of Mathematical Sciences, Fudan University, Shanghai 200433, China.}
\email{ruoyangliu18@fudan.edu.cn}
\author{Yuxing Wang}
\address[Y. Wang]{School of Mathematical Sciences, Fudan University, Shanghai 200433, China.} 
\email{wangyuxing@fudan.edu.cn}

\subjclass[2010]{60H15; 35R60; 35K59} 
\begin{abstract}
Motivated by porous medium equations with randomly perturbed velocity field, this paper considers a class of nonlinear degenerate diffusion equations with nonlinear conservative noise in bounded domains. 
The existence, uniqueness and $L_{1}$-stability
of non-negative entropy solutions under the homogeneous Dirichlet boundary condition are proved.
The approach combines Kruzhkov's doubling variables technique with
a revised strong entropy condition that is automatically satisfied 
by the solutions of approximate equations.
\end{abstract}

\keywords{nonlinear diffusion equation, entropy solution, Dirichlet problem, stochastic porous
medium equation, conservation noise.}
\maketitle

\section{Introduction\label{sec:Introduction}}

This paper is concerned with the Dirichlet problem for nonlinear diffusion
equations
\begin{equation}
\begin{aligned}\mathrm{d}u(t,x) & =\big(\Delta\Phi(u)+\nabla\cdot G(x,u)+F(x,u)\big)\mathrm{d}t\\
 & \quad+\big(\nabla\cdot\sigma^{k}(x,u)\big)\circ\mathrm{d}W^{k}(t),\quad(t,x)\in(0,T)\times D;\\
u(0,x) & =\xi(x),\quad x\in D;\\
u(t,x) & =0,\quad(t,x)\in[0,T]\times\partial D,
\end{aligned}
\label{eq:S-integral}
\end{equation}
where $D$ is a bounded domain in $\mathbb{R}^{d}$ with smooth boundary, and
$\Phi:\mathbb{R\rightarrow\mathbb{R}}$ is a monotone function. The noise $\{W^{k}\}_{k\in\mathbb{N}}$
is a sequence of independent standard Brownian motions defined on
a complete filtered probability space $(\Omega,\mathcal{F},(\mathcal{F}_{t})_{t\ge0},\mathbb{P})$,
and the stochastic integral is in the Stratonovich sense. The Einstein
summation convention is used throughout this paper.

A typical example of \cref{eq:S-integral} and also our primary motivation
of this paper is the porous medium equation in random environment,
which describes the flow of an ideal gas in a homogeneous porous medium:
let $u$ be the gas density that satisfies
\begin{equation}
\varepsilon\partial_{t}u+\nabla\cdot(u\bm{V})=0,\label{eq:mass balance}
\end{equation}
where $\varepsilon\in(0,1)$ is the porosity of the medium, and $\bm{V}$
is the randomly perturbed velocity field of the form
\begin{equation}
\bm{V}=\bm{V}_{0}+\sigma^{k}\circ\dot{W}^{k},\label{eq:random velocity field}
\end{equation}
where $\bm{V}_{0}$ is derived from Darcy's law 
\[
\mu\bm{V}_{0}=-k\nabla p=-k\nabla(p_{0}u^{m-1}).
\]
Then, we can informally derive a stochastic porous medium equation
\[
\mathrm{d}u(t,x)=c\Delta u^{m}\mathrm{d}t+\varepsilon^{-1}\nabla\cdot(u\sigma^{k})\circ\mathrm{d}W^{k}(t).
\]
For more details of the model, we refer to \cite{vazquez2007porous}
and references therein. Moreover, equations of type \cref{eq:S-integral}
also arise as limits of interacting particle systems driven by common
noise from mean field models \cite{lasry2006jeuxa,lasry2006jeux,lasry2007mean},
and as simplified models of fluctuations in non-equilibrium statistical
physics \cite{dirr2016entropic}; for more applications we refer to
\cite{fehrman2019well,dareiotis2020nonlinear} and the references
therein.

The well-posedness of the Cauchy problem for stochastic nonlinear diffusion equations with general noise has
been investigated in various frameworks, for example, the variational
approach in the space $H^{-1}$ (cf. \cite{barbu2008existence,barbu2016stochastic,ciotir2020stochastic}
etc.), the kinetic formulation (cf. \cite{debussche2016degenerate,gess2018well,fehrman2021path}
etc.), and the entropy formulation (cf. \cite{bauzet2015degenerate,dareiotis2019entropy,dareiotis2020ergodicity}
etc.).  The case of linear gradient noise,  say $\sigma(x,u)=h(x)u$ in \cref{eq:S-integral}, has been studied in \cite{dareiotis2019supremum,tolle2020stochastic,ciotir2020stochastic}, and the general case was discussed in \cite{fehrman2019well} by a kinetic approach with rough path techniques (cf. \cite{lions2013scalar,lions2014scalar,gess2015scalar,gess2017long,gess2017stochastic}),  requiring $\sigma\in C^\gamma_b(\mathbb{T}^d\times\mathbb{R})$ for some $\gamma>5$.  
The paper \cite{dareiotis2020nonlinear} relaxed the regularity assumption to $\sigma\in C^3_b(\mathbb{T}^d\times\mathbb{R})$ and treated general diffusion nonlinearity $\Phi$ under the entropy formulation.
The recent work \cite{fehrman2021well} introduced a concept of stochastic kinetic solution based on the concept of renormalized solutions, and proved the existence and uniqueness results under a very general setting that can even cover the case $\sigma(x,u)=f(x)\sqrt{u}$ with $f\in C^2_b(\mathbb{T}^d)$. 

In this paper, we adopt the entropy approach to study \cref{eq:S-integral}. The notion of entropy solution was proposed for nonlinear PDEs in
1970s to tackle the difficulty that the weak solution may not be unique
(see, for example, \cite{kruvzkov1970first,serre1999systems,andreianov2000l1,dafermos2005hyperbolic}
for scalar conservation laws and \cite{carrillo1999entropy} for degenerate
parabolic equations). The entropy solution discriminates the physical
admissible solution and maintains the uniqueness theoretically.

The entropy formulation has been naturally extended to nonlinear SPDEs.
Many works have been done for first order equations (see, for example,
\cite{kim2003stochastic} for additive noise and \cite{feng2008stochastic,vallet2009stochastic,chen2012nonlinear,bauzet2012cauchy,bauzet2014dirichlet}
for multiplicative noise). For second order equations, most works
considered the Cauchy problem in the whole space or a torus; for instance,
\cite{bauzet2015degenerate} studied the Cauchy problem of parabolic-hyperbolic
SPDEs with the noise term $\sigma(x,u)\mathrm{d}W$ and Lipschitz
functions $\Phi$. Stochastic nonlinear diffusion equations in a torus
are considered in \cite{dareiotis2019entropy} for the noise term $\sigma(x,u)\mathrm{d}W$
and in \cite{dareiotis2020nonlinear} for the conservative noise $\nabla\cdot\sigma(x,u)\circ\mathrm{d}W$.
The obstacle problem with the noise term $\sigma(x,u)\mathrm{d}W$
in a torus is introduced in \cite{liu2021obstacle}.

The results on the Dirichlet problem for stochastic nonlinear diffusion
equations are relatively few. The recent papers \cite{bavnas2020numerical,hensel2021finite}
adopted the variational approach from \cite{ren2007stochastic} with
the monotone condition and affine noise, thus not covering our setting;
\cite{dareiotis2020ergodicity} considered entropy solutions of stochastic
porous medium equations with the noise term $\sigma(x,u)\mathrm{d}W$.
A recent paper \cite{clini2023porous} is quite relevant to our paper,
studying kinetic solutions to the Dirichlet problem for porous medium
equations with nonlinear gradient noise driven by rough path. Thanks
to the entropy approach, our result is built on the same regularity
conditions with the existing work on the Cauchy problem in torus (cf. \cite{dareiotis2020nonlinear}),
and do not require extra technical assumptions like \cite[Condition~(2.4)]{clini2023porous}
that prevents the space characteristics from escaping the domain. 
According to the recent work \cite{fehrman2021well}, it is a very interesting question how to relax the regularity condition on $\sigma$ for the Dirichlet problem.

The strategy of the proof of our main result (\cref{thm:main theorem})
basically follows from \cite{dareiotis2019entropy,dareiotis2020ergodicity,dareiotis2020nonlinear},
combining the method of strong entropy condition (called the $(\star)$-property
in this paper) and Kruzhkov's doubling variables technique (cf. \cite{kruvzkov1970first}).
The notion of strong entropy condition was introduced by \cite{feng2008stochastic}
to tackle the uniqueness issue of stochastic scalar conservation laws.
The strategy can be summarized to two steps:
\begin{enumerate}
\item $L_{1}$-estimates: to derive an estimate for $\mathbb{E}\Vert u(t,\cdot)-\tilde{u}(t,\cdot)\Vert_{L_{1}(D)}$,
providing one of the entropy solutions $u$ and $\tilde{u}$ (with
different initial data) satisfies the $(\star)$-property.
\item Approximation: to construct a sequence of non-degenerate equations
whose solutions have the $(\star)$-property and converge to the entropy
solution of \cref{eq:S-integral}.
\end{enumerate}
Kruzhkov's doubling variables technique plays a key role in the proof
of the $L_{1}$-estimates.

The new difficulty arising in the Dirichlet problem, comparing to
the Cauchy problem (cf. \cite{dareiotis2020nonlinear}), is how to
deal with the boundary integral terms that may emerge when applying
the divergence theorem in the proof of the $L_{1}$-estimates. In the paper
\cite{dareiotis2020ergodicity} where the noise term is $\sigma(x,u)\mathrm{d}W$,
a weighted space with a weight function $w\in H_{0}^{1}$ satisfying
$\Delta w=-1$ is introduced to eliminate the boundary terms. However,
in the case of gradient noise, there will be a new ``trouble'' term
$|(u-\tilde{u})\nabla w|$ appearing in the estimate, which cannot
be dominated by any ``good'' terms like $|u-\tilde{u}|w$. 

There are three key points in our approach to overcome the above difficulty:
i) to expand the set of test functions in the definition of entropy
solution (see \cref{def:entropy-solution}), ii) to refine
the strong entropy condition, and iii) to construct subtly a pair
of the convex function $\eta$ and the test function $\phi$ to avoid
the appearance of boundary terms. Specifically, when applying Kruzhkov's
doubling variables technique to our problem, we have to estimate both
$(\tilde{u}(s,x)-u(t,y))^{+}$ and $(u(t,x)-\tilde{u}(s,y))^{+}$
rather than $|u(t,x)-\tilde{u}(s,y)|$ as in \cite{dareiotis2019entropy,dareiotis2020ergodicity,dareiotis2020nonlinear}.
Inspired by \cite{carrillo1999entropy,bauzet2014dirichlet}, we make
use of a partition of unity and shifted mollifiers to keep the test
function in $C_{c}^{\infty}(D)$ with respect to the variable $y$;
and for the variable $x$, we choose a sequence of smooth convex functions
to approach $\eta(r)=r^{+}$. Those carefully chosen functions along
our modified definition of entropy solution and the refined strong
entropy condition let the boundary terms vanish. 
More specific details are given at the beginning of \Cref{sec:-property-and--esitmate} and Remarks \ref{rem:adjust star-property}, \ref{rem:star property in Lemma of L1} and \ref{rem:different star property}.
It is worth noting that our approach
avoids the involvement of weight functions and considers the $L_{1}$-estimates
in standard Sobolev spaces, and our method may also apply to nonlinear
diffusion equations with the noise term $\sigma(x,u)\mathrm{d}W$
to obtain the $L_{1}$-estimates without weight.

This paper is organized as follows. \Cref{sec:Entropy-formulation} describes the entropy
formulation and presents the main theorem. \Cref{sec:-property-and--esitmate}, which is the
main part of this paper, introduces a refined strong entropy condition
and derives the $L_{1}$-estimates for the difference of two entropy solutions.
\Cref{sec:Approximation} constructs the approximate equations and proves the strong
entropy condition of their solutions as well as their solvability.
\Cref{sec:Existence-and-Uniqueness} completes the proof of the main theorem. Two auxiliary lemmas
are proved in the final section.

We conclude the introduction with some notation. Fix $T>0$. Define
$\Omega_{T}\coloneqq\Omega\times[0,T]$ and $D_{T}\coloneqq[0,T]\times D$.
Define $|D|$ and $\overline{D}$ as the volume and closure of $D$, respectively. $L_{p}$ and $H_{p}^{k}$ are
the usual Lebesgue spaces and Sobolev spaces. 
Denote by $H_{p,0}^{k}$
the closure of $C_{c}^{\infty}$ in $H_{p}^{k}$. When $p=2$, we
simplify the notation by $H^{k}\coloneqq H_{2}^{k}$ and $H_{0}^{k}\coloneqq H_{2,0}^{k}$.
Moreover, if a function space is given on $\Omega$ or $\Omega_{T}$,
we understand it to be defined with respect to $\mathcal{F}_{T}$
and the predictable $\sigma$-field, respectively. Let $E$ be a Banach
space and $U=D$ or $\mathbb{R}$. For a function $f:U\rightarrow E$,
we define
\begin{align*}
[f]_{C^{\alpha}(U;E)} & \coloneqq\sup_{x,y\in U,\ x\neq y}\frac{\Vert f(x)-f(y)\Vert_{E}}{|x-y|^{\alpha}},\quad\alpha\in(0,1],\\
\Vert f\Vert_{C^{\alpha}(U;E)} & \coloneqq[f]_{C^{\alpha}(U;E)}+\sup_{x\in U}\Vert f(x)\Vert_{E}.
\end{align*}
We define a non-negative smooth mollifier $\rho:\mathbb{R}\rightarrow\mathbb{R}$,
such that $\text{supp}\,\rho\subset(0,1)$, $\rho\leq2$ and $\int_{\mathbb{R}}\rho(r)\mathrm{d}r=1$.
For $\delta>0$, we set $\rho_{\delta}(r)\coloneqq\delta^{-1}\rho(\delta^{-1}r)$
as a sequence of mollifiers. 

\section{Entropy formulation and main results\label{sec:Entropy-formulation}}

First of all, we rewrite \cref{eq:S-integral} into an It\^{o} form
(the notation follows from \cite{dareiotis2020nonlinear}):
\begin{gather}
\begin{aligned}\mathrm{d}u(t,x) & =\big[\Delta\Phi(u)+\partial_{x_{i}}\big(a^{ij}(x,u)\partial_{x_{j}}u+b^{i}(x,u)+f^{i}(x,u)\big)+F(x,u)\big]\mathrm{d}t\\
 & \quad+\big(\nabla\cdot\sigma^{k}(x,u)\big)\mathrm{d}W^{k}(t),\quad(t,x)\in(0,T)\times D;\\
u(0,x) & =\xi(x),\quad x\in D;\\
u(t,x) & =0,\quad(t,x)\in[0,T]\times\partial D,
\end{aligned}
\label{eq:ito-integral}
\end{gather}
where $i,j=1,\dots,d$ and
\begin{align*}
a^{ij}(x,r) & =\frac{1}{2}\sigma_{r}^{ik}(x,r)\sigma_{r}^{jk}(x,r),\quad b^{i}(x,r)=\sigma_{r}^{ik}(x,r)\sigma_{x_{j}}^{jk}(x,r),\\
f^{i}(x,r) & =G^{i}(x,r)-\frac{1}{2}b^{i}(x,r).
\end{align*}
We denote by $\Pi(\Phi,\xi)$ the Dirichlet problem \cref{eq:ito-integral}
with given $\Phi$ and $\xi$.

Throughout this paper, we denote 
\[
\mathfrak{a}(r)\coloneqq\sqrt{\Phi^{\prime}(r)};
\]
for a function $g:D\times\mathbb{R}\rightarrow\mathbb{R}$, we use
the notation
\[
\llbracket g\rrbracket(x,r)\coloneqq\int_{0}^{r}g(x,s)\mathrm{d}s,
\]
and drop $x$ in the above if $g$ does not depend on $x\in D$. Our
condition on the nonlinearity $\Phi$ is the same with \cite{dareiotis2019entropy,dareiotis2020ergodicity,dareiotis2020nonlinear}.
\begin{assumption}
\label{assu:=00005CPhi}$\Phi:\mathbb{R}\rightarrow\mathbb{R}$ is
 differentiable, strictly increasing and satisfying $\Phi(0)=0$. With $\mathfrak{a}(r)=\sqrt{\Phi^{\prime}(r)}$,
there exist constants $m>1$ and $K>0$ such that
\begin{equation}
\begin{aligned} & |\mathfrak{a}(0)|\leq K,\quad|\mathfrak{a}^{\prime}(r)|\leq K|r|^{\frac{m-3}{2}}\bm{1}_{r\neq0},\quad\mathfrak{a}(r)\geq K^{-1}\bm{1}_{|r|\geq1},\\
 & |\llbracket\mathfrak{a}\rrbracket(r)-\llbracket\mathfrak{a}\rrbracket(s)|\geq\begin{cases}
K^{-1}|r-s|, & \mbox{if }\ |r|\lor|s|\geq1,\\
K^{-1}|r-s|^{\frac{m+1}{2}}, & \mbox{if }\ |r|\lor|s|<1.
\end{cases}
\end{aligned}
\label{eq:a K}
\end{equation}
\end{assumption}

The following definition of entropy solution is based on the formulation
in \cite{dareiotis2020ergodicity,dareiotis2020nonlinear} with a slight
but significant modification inspired by \cite[Definition 1]{bauzet2014dirichlet}.
Define two sets
\begin{align*}
\mathcal{E} & \coloneqq\{\eta\in C^{2}(\mathbb{R}):\eta^{\prime\prime}\geq0,\ \text{supp}\,\eta^{\prime\prime}\text{ is compact}\},\\
\mathcal{E}_{0} & \coloneqq\{\eta\in\mathcal{E}:\eta^{\prime}(0)=0\}.
\end{align*}

\begin{defn}
\label{def:entropy-solution}An entropy solution of \cref{eq:ito-integral}
is a predictable stochastic process $u:\Omega_{T}\rightarrow L_{1}(D)$
such that
\begin{enumerate}[label=(\roman*)]
\item $u\in L_{m+1}(\Omega_{T};L_{m+1}(D))$;

\item For all $f\in C_{b}(\mathbb{R})$, we have $\llbracket\mathfrak{a}f\rrbracket(u)\in L_{2}(\Omega_{T};H_{0}^{1}(D))$
and
\[
\partial_{x_{i}}\llbracket\mathfrak{a}f\rrbracket(u)=f(u)\partial_{x_{i}}\llbracket\mathfrak{a}\rrbracket(u);
\]

\item For all 
\[
(\eta,\varphi,\varrho)\in\big(\mathcal{E}\times C_{c}^{\infty}[0,T)\times C_{c}^{\infty}(D)\big)\cup\big(\mathcal{E}_{0}\times C_{c}^{\infty}[0,T)\times C^{\infty}(\overline{D})\big)
\]
such that $\phi\coloneqq\varphi\times\varrho\geq0$, we have almost
surely
\begin{align}
 & -\int_{0}^{T}\int_{D}\eta(u)\partial_{t}\phi\mathrm{d}x\mathrm{d}t\label{eq:entropy formulation}\\
 & \leq\int_{D}\eta(\xi)\phi(0)\mathrm{d}x+\int_{0}^{T}\int_{D}\Big(\llbracket\mathfrak{a}^{2}\eta^{\prime}\rrbracket(u)\Delta\phi+\llbracket a^{ij}\eta^{\prime}\rrbracket(x,u)\phi_{x_{i}x_{j}}\Big)\mathrm{d}x\mathrm{d}t\nonumber \\
 & +\int_{0}^{T}\int_{D}\Big(\llbracket a_{x_{j}}^{ij}\eta^{\prime}-f_{r}^{i}\eta^{\prime}\rrbracket(x,u)-\eta^{\prime}(u)b^{i}(x,u)\Big)\phi_{x_{i}}\mathrm{d}x\mathrm{d}t\nonumber \\
 & +\int_{0}^{T}\int_{D}\Big(\eta^{\prime}(u)f_{x_{i}}^{i}(x,u)-\llbracket f_{rx_{i}}^{i}\eta^{\prime}\rrbracket(x,u)+\eta^{\prime}(u)F(x,u)\Big)\phi\mathrm{d}x\mathrm{d}t\nonumber \\
 & +\int_{0}^{T}\int_{D}\Big(\frac{1}{2}\eta^{\prime\prime}(u)\sum_{k=1}^{\infty}|\sigma_{x_{i}}^{ik}(x,u)|^{2}-\eta^{\prime\prime}(u)|\nabla\llbracket\mathfrak{a}\rrbracket(u)|^{2}\Big)\phi\mathrm{d}x\mathrm{d}t\nonumber \\
 & +\int_{0}^{T}\int_{D}\Big(\eta^{\prime}(u)\phi\sigma_{x_{i}}^{ik}(x,u)-\llbracket\sigma_{rx_{i}}^{ik}\eta^{\prime}\rrbracket(x,u)\phi-\llbracket\sigma_{r}^{ik}\eta^{\prime}\rrbracket(x,u)\phi_{x_{i}}\Big)\mathrm{d}x\mathrm{d}W^{k}(t).\nonumber 
\end{align}
\end{enumerate}
\end{defn}

\begin{rem}
Comparing with \cite[Definition 2.4]{dareiotis2020ergodicity}, we
expand the set of test functions $(\eta,\varphi,\varrho)$ in order
to give a more precise characterization of the behavior of solutions
near the boundary. This is a critical point in our proof of the $L_{1}$-estimates.
 Moreover, the Dirichlet boundary condition is satisfied
implicitly according to \cref{def:entropy-solution} (ii)
and (iii).
\end{rem}

The regularity assumption on coefficients coincides with \cite[Assumption 2.3]{dareiotis2020nonlinear}
for the Cauchy problem in a torus.
\begin{assumption}
\label{assu:Coefficients}Let $G^{i}:D\times\mathbb{R}\rightarrow\mathbb{R}$
and $\sigma^{i}:D\times\mathbb{R}\rightarrow l^{2}$ for $i\in\{1,\ldots,d\}$
and $F:D\times\mathbb{R}\rightarrow\mathbb{R}$ are all continuous.
For all $i,l\in\{1,\dots,d\}$, $q\in\{1,2\}$ and all multi-indices
$\gamma\in\mathbb{N}^{d}$ with $q+|\gamma|\leq3$, the derivatives
$\partial_{r}G^{i}$, $\partial_{x_{l}}G^{i}$, $\partial_{rx_{l}}G^{i}$
and $\partial_{r}^{q}\partial_{x}^{\gamma}\sigma^{i}$ exist and are
continuous on $D\times\mathbb{R}$. Moreover, there exist $\bar{\kappa}\in((m\land2)^{-1},1]$,
$\beta\in((2\bar{\kappa})^{-1},1]$, $N_{0}>0$ and $\tilde{\beta}\in(0,1)$
such that for all $i,j,l\in\{1,\dots,d\}$ and $r\in\mathbb{R}$,
we have:
\begin{align}
 & \sup_{r}\Vert\sigma_{r}^{i}(\cdot,r)\Vert_{H_{\infty}^{2}(D;l^{2})}+\Vert\sigma_{x_{i}}^{i}(\cdot,0)\Vert_{C^{\bar{\kappa}}(D,l^{2})}\leq N_{0},\label{eq:sigma x}\\
 & \sup_{x\in D}\Big([\sigma(x,\cdot)]_{C^{\beta}(\mathbb{R},l^{2})}+\Vert\sigma_{rx_{l}}^{i}(x,\cdot)\Vert_{H_{\infty}^{1}(\mathbb{R};l^{2})}\Big)\leq N_{0},\label{eq:sigma r}\\
 & \|\partial_{r}(\sigma_{x_{j}}^{jk}\sigma_{rx_{l}}^{ik})\|_{L_{\infty}}\leq N_{0},\label{eq:interchange}\\
 & \sup_{x\in D}\Big(\Vert G_{r}^{i}(x,\cdot)\Vert_{C^{\beta}(\mathbb{R})}+\Vert\partial_{r}(\sigma_{r}^{ik}\sigma_{x_{j}}^{jk})(x,\cdot)\Vert_{C^{\beta}(\mathbb{R})}\Big)\leq N_{0},\label{eq:f r}\\
 & [G_{x_{l}}^{i}(\cdot,r)]_{C^{\tilde{\beta}}(D)}+[\partial_{x_{l}}(\sigma_{r}^{ik}(\cdot,r)\sigma_{x_{j}}^{jk}(\cdot,r))]_{C^{\tilde{\beta}}(D)}\leq N_{0}(1+|r|),\label{eq:f x}\\
 & \|\partial_{x_{l}}\partial_{r}(\sigma_{r}^{ik}\sigma_{x_{j}}^{jk})\|_{L_{\infty}}+\|G_{x_{l}r}^{i}\|_{L_{\infty}}\leq N_{0},\label{eq:f r xl}\\
 & \sup_{x\in D}[F(x,\cdot)]_{C^{1}(\mathbb{R})}+\sup_{r}\Vert F(\cdot,r)\Vert_{C^{\tilde{\beta}}(D)}\leq N_{0}.\label{eq:F}
\end{align}
\end{assumption}

Due to the physical background, we focus on non-negative solutions
to the concerned problem, for which we need the following natural
condition.
\begin{assumption}
\label{assu:zero condition}The coefficients satisfy
\begin{equation}
\nabla_{x}\cdot G(x,0)+F(x,0)\geq0,\quad\nabla_{x}\cdot\sigma(x,0)=\{0\}_{k=1}^{\infty},\quad x\in D.\label{eq:zero condition}
\end{equation}
\end{assumption}

The main result of this paper is stated as follows.
\begin{thm}
\label{thm:main theorem}Under Assumptions \ref{assu:=00005CPhi}, \ref{assu:Coefficients} and \ref{assu:zero condition},
 for all non-negative initial function $\xi\in L_{m+1}(\Omega,\mathcal{F}_{0};L_{m+1}(D))$,
we have that
\begin{enumerate}[label=(\roman*)]
\item the problem $\Pi(\Phi,\xi)$ has a unique entropy solution
$u$; 

\item $u\geq0$ for almost all $(\omega,t,x)\in\Omega\times D$;

\item if $\tilde{u}$ is the entropy solution to $\Pi(\Phi,\tilde{\xi})$
with $0\leq\tilde{\xi}\in L_{m+1}(\Omega,\mathcal{F}_{0};L_{m+1}(D))$,
then
\begin{equation}
\underset{t\in[0,T]}{\mathrm{ess\,sup}}\,\mathbb{E}\Vert(u(t,\cdot)-\tilde{u}(t,\cdot))^{+}\Vert_{L_{1}(D)}\leq C\mathbb{E}\Vert(\xi-\tilde{\xi})^{+}\Vert_{L_{1}(D)},\label{eq:L1-est}
\end{equation}
where the constant $C$ depends only on $N_{0}$, $K$, $d$, $T$
and $|D|$.
\end{enumerate}
\end{thm}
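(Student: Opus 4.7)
The plan is to assemble the two main ingredients developed in Sections 3 and 4: the $L_1$-contraction estimate for entropy solutions, valid whenever one of the two solutions enjoys the strong entropy $(\star)$-property, and the construction of non-degenerate approximations whose solutions automatically possess the $(\star)$-property. With these in hand, all three conclusions of \cref{thm:main theorem} follow in a unified way.

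For existence in part \emph{(i)}, I would replace $\Phi$ by a strictly non-degenerate $\Phi_\varepsilon = \Phi + \varepsilon\,\mathrm{id}$ and suitably mollify the coefficients so that the approximate Dirichlet problem $\Pi(\Phi_\varepsilon,\xi_\varepsilon)$, with smooth non-negative $\xi_\varepsilon \to \xi$, admits by Section 4 a sufficiently regular solution $u^\varepsilon$ that satisfies the $(\star)$-property. Comparing two parameters $\varepsilon,\varepsilon'$ via the $L_1$-estimate of Section 3 — using either side as the $(\star)$-solution — yields a Cauchy property of $\{u^\varepsilon\}$ in $L_1(\Omega\times D_T)$, hence a strong limit $u$. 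The uniform $L_{m+1}$-bound intrinsic to the entropy formulation (see \cref{def:entropy-solution}(i)) supplies the integrability needed to pass to the limit in every term of the entropy inequality, together with the continuity properties of $\Phi,\sigma,G,F$ from \cref{assu:Coefficients} and standard arguments for the stochastic integral.

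For the non-negativity \emph{(ii)}, I would observe that under \cref{assu:zero condition} the constant function zero is itself an entropy solution to $\Pi(\Phi,0)$. Applying the $L_1$-estimate to the pair $(0,u^\varepsilon)$ — with $u^\varepsilon$ serving as the $(\star)$-side — produces
\begin{equation*}
\mathbb{E}\big\|(0-u^\varepsilon(t,\cdot))^+\big\|_{L_1(D)} \leq C\,\mathbb{E}\big\|(0-\xi_\varepsilon)^+\big\|_{L_1(D)}=0,
\end{equation*}
since $\xi_\varepsilon\geq 0$. Thus $u^\varepsilon\geq 0$ almost surely, and the bound persists in the limit $\varepsilon\to 0$.

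For uniqueness and the stability bound \emph{(iii)}, take any entropy solutions $u,\tilde u$ of $\Pi(\Phi,\xi)$ and $\Pi(\Phi,\tilde\xi)$ and let $u^\varepsilon$ be the approximation to the first problem. Since $u^\varepsilon$ possesses the $(\star)$-property, the $L_1$-estimate gives
\begin{equation*}
\mathbb{E}\big\|(u^\varepsilon(t,\cdot)-\tilde u(t,\cdot))^+\big\|_{L_1(D)} \leq C\,\mathbb{E}\big\|(\xi_\varepsilon-\tilde\xi)^+\big\|_{L_1(D)};
\end{equation*}
sending $\varepsilon\to 0$ produces \cref{eq:L1-est}, and taking $\tilde\xi=\xi$ forces $u=\tilde u$. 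The delicate part throughout will be the passage to the limit in the entropy inequality — controlling nonlinear compositions such as $\llbracket\mathfrak{a}^2\eta'\rrbracket(u^\varepsilon)$ and $\llbracket a^{ij}\eta'\rrbracket(\cdot,u^\varepsilon)$, together with the stochastic integral, in the Dirichlet geometry without reintroducing boundary contributions — which is precisely what the refined test-function class and modified $(\star)$-property of Section 3 were designed to handle.
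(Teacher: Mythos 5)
Your high-level architecture — build non-degenerate approximations with the $(\star)$-property, pass to the limit, then use the $L_1$-contraction to pin down any entropy solution — matches the paper's strategy for parts (i) and (iii). However, your treatment of part (ii) contains a genuine circularity. The $L_1$-estimate machinery (\cref{lem:u-=00005Ctildeu} and \cref{thm:the first part}) is only established for \emph{non-negative} entropy solutions: the very first step of the proof of \cref{lem:u-=00005Ctildeu} invokes \cref{prop:non-negative} to conclude $u,\tilde u\geq 0$, and assertion (i) of the $(\star)$-property in \cref{def:star property} itself restricts the test function $u$ to be non-negative. You cannot therefore apply the $L_1$-estimate to the pair $(0,u^\varepsilon)$ in order to \emph{derive} $u^\varepsilon\geq 0$, since the estimate already presupposes that fact. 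Moreover, your premise that the constant function $0$ is an entropy solution to $\Pi(\Phi,0)$ is false in general: testing with $\eta\in\mathcal{E}$ having $\eta'(0)<0$ and $\varrho\in C_c^\infty(D)$, the only surviving term on the right of \cref{eq:entropy formulation} is $\int_0^T\int_D\eta'(0)\bigl(\nabla\cdot G(x,0)+F(x,0)\bigr)\phi\,\mathrm{d}x\,\mathrm{d}t$, which is non-positive whenever \cref{assu:zero condition} holds with strict inequality somewhere, violating the entropy inequality. The paper avoids all of this by proving \cref{prop:non-negative} directly and independently from the entropy formulation, using $\eta(\cdot)=\eta_\delta(-\cdot)$ with an $x$-independent test function, Gronwall, and \cref{lem:initial value}; that proposition is a logical prerequisite for the $L_1$-estimate, not a corollary of it.

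Two smaller remarks. First, your approximation $\Phi_\varepsilon=\Phi+\varepsilon\,\mathrm{id}$ with mollified coefficients is not the scheme the paper actually uses (\cref{prop:Phi_n} produces $\Phi_n\in C^\infty$ with bounded derivatives satisfying \cref{assu:=00005CPhi} with constant $3K$, and the coefficients are left untouched), so you would need to re-check \cref{eq:the uniform estimates of u_n and a_n}--\cref{eq:u_n one order} and \cref{prop:u_n star property} for your family; in particular $\Phi+\varepsilon\,\mathrm{id}$ need not have bounded derivatives when $\Phi$ does not. Second, in part (iii) the step ``sending $\varepsilon\to 0$ produces \cref{eq:L1-est}'' tacitly identifies the limit of $u^\varepsilon$ with the \emph{arbitrary} entropy solution $u$, which is exactly the uniqueness assertion; the correct order (as in the paper) is to first prove that $u^\varepsilon$ converges to a \emph{specific constructed} solution $u_*$ that inherits the $(\star)$-property via \cref{lem:Limit star property}, compare any $\tilde u$ with $u_*$ to deduce $\tilde u=u_*$ when $\tilde\xi=\xi$, and only then read off \cref{eq:L1-est}. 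Both estimates $\mathbb{E}\Vert(u_*-\tilde u)^+\Vert_{L_1}$ and $\mathbb{E}\Vert(\tilde u-u_*)^+\Vert_{L_1}$ are needed for this, which is precisely why \cref{thm:the first part} is stated so that the $(\star)$-property of \emph{either} solution suffices.
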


We conclude this section by proving the non-negativity of entropy solutions
under our assumptions.
\begin{prop}
\label{prop:non-negative}Under the condition of \cref{thm:main theorem},
each entropy solution to $\Pi(\Phi,\xi)$ with $0\leq\xi\in L_{m+1}(\Omega,\mathcal{F}_{0};L_{m+1}(D))$
is non-negative for almost all $(\omega,t,x)\in\Omega_{T}\times D$.
\end{prop}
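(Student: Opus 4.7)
The plan is to test the entropy inequality \cref{eq:entropy formulation} against a carefully chosen pair $(\eta_\epsilon,\phi)$ that isolates the negative part of $u$, and then to pass to the limit. Concretely, I would take a smooth convex approximation $\eta_\epsilon\in\mathcal{E}_0$ of $r\mapsto r^-$ with $\eta_\epsilon\equiv 0$ on $[0,\infty)$, $\eta_\epsilon'(r)=-1$ on $(-\infty,-2\epsilon]$, and $\eta_\epsilon''\ge 0$ supported in $[-2\epsilon,0]$ with unit mass. Since $\eta_\epsilon'(0)=0$, the triple $(\eta_\epsilon,\varphi,1)$ with any non-negative $\varphi\in C_c^\infty[0,T)$ is admissible for the expanded test-function class in \cref{def:entropy-solution}(iii), and the resulting $\phi(t,x)=\varphi(t)$ is independent of $x$. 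This kills every term in \cref{eq:entropy formulation} involving $\phi_{x_i}$ or $\phi_{x_ix_j}$, and the initial contribution dies because $\xi\ge 0$ forces $\eta_\epsilon(\xi)=0$.

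Taking expectation annihilates the stochastic integral (boundedness of $\eta_\epsilon'$ and of the relevant $\sigma$-coefficients secures the martingale property), and the non-positive term $-\eta_\epsilon''(u)|\nabla\llbracket\mathfrak{a}\rrbracket(u)|^2$ can be dropped. Using $\eta_\epsilon'(0)=0$, integration by parts in $s$ collapses the drift pair to
\[
\eta_\epsilon'(u)f^i_{x_i}(x,u)-\llbracket f^i_{rx_i}\eta_\epsilon'\rrbracket(x,u)=\int_0^u f^i_{x_i}(x,s)\eta_\epsilon''(s)\,ds,
\]
which, added to $\eta_\epsilon'(u)F(x,u)$, converges pointwise for $u<0$ to $-[f^i_{x_i}(x,0)+F(x,0)]-[F(x,u)-F(x,0)]$. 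The critical use of \cref{assu:zero condition} is that differentiating $\sigma^{jk}_{x_j}(x,0)=0$ in $x_i$ yields $\sigma^{jk}_{x_jx_i}(x,0)=0$, whence a direct computation of $b^i=\sigma^{ik}_r\sigma^{jk}_{x_j}$ gives $b^i_{x_i}(x,0)=0$; consequently $f^i_{x_i}(x,0)=G^i_{x_i}(x,0)$ and $f^i_{x_i}(x,0)+F(x,0)\ge 0$, so the bracketed term is non-positive, while the remainder is controlled by $[F]_{C^1(\mathbb{R})}\,u^-$. The It\^o correction $\tfrac{1}{2}\eta_\epsilon''(u)\sum_k|\sigma^{ik}_{x_i}(x,u)|^2$ vanishes in the limit because $\sigma^{ik}_{x_i}(x,0)=0$ and the Lipschitz estimates from \cref{assu:Coefficients} give $\sum_k|\sigma^{ik}_{x_i}(x,u)|^2\le C u^2$, while $\eta_\epsilon''\le C/\epsilon$ on $[-2\epsilon,0]$, yielding a contribution of order $\epsilon$.

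Letting $\epsilon\to 0$ and choosing $\varphi$ as a standard mollified indicator of $[0,\tau]$, the left-hand side converges to $\mathbb{E}\|u^-(\tau,\cdot)\|_{L_1(D)}$ for almost every $\tau\in[0,T]$, and the inequality reduces to
\[
\mathbb{E}\|u^-(\tau,\cdot)\|_{L_1(D)}\le C\int_0^\tau\mathbb{E}\|u^-(t,\cdot)\|_{L_1(D)}\,dt,
\]
with $C$ depending only on $N_0$; Gronwall's lemma then forces $u^-\equiv 0$ almost everywhere on $\Omega\times[0,T]\times D$. The main obstacle, in my view, is the bookkeeping around \cref{assu:zero condition}: the hypothesis \cref{eq:zero condition} is stated for the Stratonovich drift $\nabla\cdot G+F$ and for $\nabla\cdot\sigma$, whereas the entropy formulation uses the It\^o-corrected drift $f^i$ and its $x$-derivatives, so one must verify that the required positivity survives the It\^o--Stratonovich conversion. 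It is precisely the enlarged test-function class in \cref{def:entropy-solution}(iii) that permits the choice $\varrho\equiv 1$ and thereby avoids introducing any boundary contributions that would otherwise have to be estimated separately.
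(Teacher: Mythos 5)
Your proposal is correct and follows essentially the same route as the paper: test the entropy inequality against a smooth convex approximation of $r^-$ (the paper uses $\eta(\cdot)=\eta_\delta(-\cdot)$ with $\eta_\delta''=\rho_\delta$, which has exactly the shape you describe) paired with a spatially constant $\phi$, drop the non-positive gradient term, kill the martingale term under expectation, and close with Gronwall. The one place where you genuinely diverge from the paper is the treatment of the initial time. The paper takes $\phi_n\in C_c^\infty((0,T))$ with $\phi_n\to V_{(\gamma)}$ and $V_{(\gamma)}(0)=0$, which produces a difference of $L_1$-masses at two intermediate times $s<\tau$ and then requires the auxiliary \cref{lem:initial value} (an $L_2$-continuity at $t=0$ estimate) to push $s\to 0^+$; you instead exploit the larger test class $C_c^\infty[0,T)$ by taking $\varphi(0)=1$, so the initial term $\int_D\eta_\epsilon(\xi)\varphi(0)\,\mathrm{d}x$ appears explicitly and is annihilated by $\xi\geq0$. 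Your variant is a modest simplification for this proposition alone, since it bypasses \cref{lem:initial value} entirely; the paper likely reuses that lemma here because it is needed anyway in the proof of \cref{thm:the first part}. You also correctly isolate the one nontrivial algebraic check that the paper packages into the inequality \cref{eq:growth of sigma f}: that the It\^o correction in $f^i=G^i-\tfrac12 b^i$ does not disturb the sign condition \cref{eq:zero condition}, because $b^i_{x_i}(\cdot,0)=0$. One small imprecision: from $\nabla_x\cdot\sigma(x,0)=0$ you only get $\sum_j\sigma^{jk}_{x_jx_i}(x,0)=0$, not that each $\sigma^{jk}_{x_jx_i}(x,0)$ vanishes individually; this summed identity is what actually kills the term $\sigma^{ik}_r(x,0)\,\sigma^{jk}_{x_jx_i}(x,0)$ in $b^i_{x_i}(x,0)$, so the conclusion stands but the intermediate assertion should be stated with the divergence sum.
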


\begin{proof}
For a small $\delta>0$, we introduce a function $\eta_{\delta}\in C^{2}(\mathbb{R})$
defined by 
\[
\eta_{\delta}(0)=\eta_{\delta}^{\prime}(0)=0,\quad\eta_{\delta}^{\prime\prime}(r)=\rho_{\delta}(r).
\]
Applying the entropy inequality \cref{eq:entropy formulation} with
$\eta(\cdot)=\eta_{\delta}(-\cdot)$ and $\phi$ independent of $x$,
using the non-negativity of $\xi$, we have
\begin{align}
 & -\mathbb{E}\int_{0}^{T}\int_{D}\eta_{\delta}(-u)\partial_{t}\phi\mathrm{d}x\mathrm{d}t\label{eq:entropy for positive}\\
 & \leq\mathbb{E}\int_{0}^{T}\int_{D}\Big(\llbracket f_{rx_{i}}^{i}\eta_{\delta}^{\prime}(-\cdot)\rrbracket(x,u)-\eta_{\delta}^{\prime}(-u)\big(f_{x_{i}}^{i}(x,u)-G_{x_{i}}^{i}(x,0)\big)\Big)\phi\mathrm{d}x\mathrm{d}t\nonumber \\
 & \quad-\mathbb{E}\int_{0}^{T}\int_{D}\eta{}_{\delta}^{\prime}(-u)\big(F(x,u)+G_{x_{i}}^{i}(x,0)\big)\phi\mathrm{d}x\mathrm{d}t\nonumber \\
 & \quad+\mathbb{E}\int_{0}^{T}\int_{D}\Big(\frac{1}{2}\eta_{\delta}^{\prime\prime}(-u)\sum_{k=1}^{\infty}|\sigma_{x_{i}}^{ik}(x,u)|^{2}\phi-\eta_{\delta}^{\prime\prime}(-u)|\nabla\llbracket\mathfrak{a}\rrbracket(u)|^{2}\phi\Big)\mathrm{d}x\mathrm{d}t.\nonumber 
\end{align}
With \cref{assu:zero condition} and \cref{eq:F} in 
\cref{assu:Coefficients}, we have
\begin{equation}
\sup_{x}\big(F(x,r)+G_{x_{i}}^{i}(x,0)\big)\geq\sup_{x}\big(F(x,0)+G_{x_{i}}^{i}(x,0)\big)-C|r|\geq-C|r|\label{eq:growth of F,G}
\end{equation}
for a positive constant $C$. Moreover, using \cref{assu:zero condition},
the definition of $f^{i}$ and \cref{eq:sigma r} and \cref{eq:f r xl}
in \cref{assu:Coefficients}, we have
\begin{equation}
\sup_{x}|\sigma_{x_{i}}^{i}(x,r)|_{l_{2}}+\sup_{x}|f_{x_{i}}^{i}(x,r)-G_{x_{i}}^{i}(x,0)|\leq C|r|.\label{eq:growth of sigma f}
\end{equation}
Combining \cref{eq:entropy for positive}-\cref{eq:growth of sigma f}
with \cref{eq:f r xl} in \cref{assu:Coefficients}, we
have
\begin{align*}
-\mathbb{E}\int_{0}^{T}\int_{D}\eta_{\delta}(-u)\partial_{t}\phi\mathrm{d}x\mathrm{d}t & \leq C\mathbb{E}\int_{0}^{T}\int_{D}(-u)^{+}\phi\mathrm{d}x\mathrm{d}t+C\delta.
\end{align*}
Since $|\eta_{\delta}(r)-r^{+}|\leq\delta$, taking $\delta\rightarrow0^{+}$,
we have
\begin{equation}
-\mathbb{E}\int_{0}^{T}\int_{D}(-u)^{+}\partial_{t}\phi\mathrm{d}x\mathrm{d}t\leq C\mathbb{E}\int_{0}^{T}\int_{D}(-u)^{+}\phi\mathrm{d}x\mathrm{d}t.\label{eq:non-negative with phi}
\end{equation}
Let $0<s<\tau<T$ be Lebesgue points of the function 
\[
t\mapsto\mathbb{E}\int_{D}(-u(t,x))^{+}\mathrm{d}x.
\]
Fix a constant $\gamma\in(0,(\tau-s)\lor(T-\tau))$. We choose a sequence
of functions $\{\phi_{n}\}_{n\in\mathbb{N}}$ satisfying $\phi_{n}\in C_{c}^{\infty}((0,T))$
and $\Vert\phi_{n}\Vert_{L_{\infty}(0,T)}\lor\Vert\partial_{t}\phi_{n}\Vert_{L_{1}(0,T)}\leq1$,
such that
\[
\lim_{n\rightarrow\infty}\Vert\phi_{n}-V_{(\gamma)}\Vert_{H_{0}^{1}(0,T)}=0,
\]
where $V_{(\gamma)}:[0,T]\rightarrow\mathbb{R}$ satisfies $V_{(\gamma)}(0)=0$
and $V_{(\gamma)}^{\prime}=\gamma^{-1}\mathbf{1}_{[s,s+\gamma]}-\gamma^{-1}\mathbf{1}_{[\tau,\tau+\gamma]}$.
Taking $\phi=\phi_{n}$ in \cref{eq:non-negative with phi} and passing to 
the limit $n\rightarrow\infty$, we have
\begin{align*}
  \frac{1}{\gamma}\mathbb{E}\int_{\tau}^{\tau+\gamma}\int_{D}(-u)^{+}\mathrm{d}x\mathrm{d}t
 \leq C\mathbb{E}\int_{0}^{\tau+\gamma}\int_{D}(-u)^{+}\mathrm{d}x\mathrm{d}t+\frac{1}{\gamma}\mathbb{E}\int_{s}^{s+\gamma}\int_{D}(-u)^{+}\mathrm{d}x\mathrm{d}t.
\end{align*}
Let $\gamma\rightarrow0^{+}$, we have
\[
\mathbb{E}\int_{D}(-u(\tau,x))^{+}\mathrm{d}x\leq C\mathbb{E}\int_{0}^{\tau}\int_{D}(-u)^{+}\mathrm{d}x\mathrm{d}t+\mathbb{E}\int_{D}(-u(s,x))^{+}\mathrm{d}x
\]
holds for almost all $s\in(0,\tau)$. Then, for each $\tilde{\gamma}\in(0,\tau)$,
by averaging over $s\in(0,\tilde{\gamma})$, we have
\begin{align*}
\mathbb{E}\int_{D}(-u(\tau,x))^{+}\mathrm{d}x
\leq C\mathbb{E}\int_{0}^{\tau}\int_{D}(-u)^{+}\mathrm{d}x\mathrm{d}t+\frac{1}{\tilde{\gamma}}\mathbb{E}\int_{0}^{\tilde{\gamma}}\int_{D}(-u)^{+}\mathrm{d}x\mathrm{d}s.
\end{align*}
Taking the limit $\tilde{\gamma}\rightarrow0^{+}$ and using 
\cref{lem:initial value}, the non-negativity of $\xi$ and Gronwall's
inequality, we have $u\geq0$ for almost all $(\omega,t,x)\in\Omega_{T}\times D$.
\end{proof}

\section{Strong entropy condition and $L_{1}$-estimates\label{sec:-property-and--esitmate}}

The uniqueness for entropy solutions of stochastic partial differential equations is usually a challenging problem. The seminal work \cite{feng2008stochastic} introduced a notion of strong entropy condition, called the $(\star)$-property in what follows, to deal with the stochastic integral in the $L_1$-estimates
and proved the uniqueness of the strong entropy solution (namely, an entropy solution that satisfies the $(\star)$-property) for the Cauchy
problem of stochastic scalar conservation laws.    
Recently, a series of papers \cite{dareiotis2019entropy,dareiotis2020ergodicity,dareiotis2020nonlinear} improved this technique and proved the uniqueness of entropy solutions to stochastic porous medium equations. 
The basic idea is to estimate the $L_1$-difference between an entropy solution and a strong entropy solution, which leads to the uniqueness of entropy solutions by proving that the entropy solutions constructed from approximation always satisfy the $(\star)$-property. 
The key technique in the proof of the $L_1$-estimates is Kruzhkov's doubling variables method \cite{kruvzkov1970first}, which is a classical method to study the uniqueness problem for deterministic conservation laws. 

From the Cauchy problem to the Dirichlet problem, the new technical difficulties mainly lie in how to handle the boundary terms. 
Considering the noise form $\sigma(x,u)\mathrm{d}W$, the paper
\cite{dareiotis2020ergodicity} introduced a weighted space to overcome the difficulties, where the weight function $w\in H_{0}^{1}$ satisfying
$\Delta w=-1$ is used to remove the boundary terms in the estimate. 
However, in the case of \cref{eq:S-integral}, there will be a new ``trouble'' term
$|(u-\tilde{u})\nabla w|$ appearing in the $L_1$-estimates, which cannot
be dominated by any ``good'' terms like $|u-\tilde{u}|w$.

In order to deal with the boundary terms in our case, we do not introduce the weight function but modify the definition of the $(\star)$-property by subtly choosing the test functions. 
Since the modified definition is highly intricate,
we give a heuristic explanation before the exact formulation.

We begin by addressing the problem of obtaining the $L_1$-difference between entropy solutions $u$ and $\tilde{u}$ of \cref{eq:S-integral}.
Using Kruzhkov's doubling variables technique (see, for example,  \cite{kruvzkov1970first,li2012homogeneous,dareiotis2019entropy,dareiotis2020ergodicity,dareiotis2020nonlinear}), we wish to estimate the term
\begin{equation}\label{eq:formalKruzhkov}
\mathbb{E}\int_{D_T}\int_{D_T}|u(t,x)-\tilde{u}(s,y)|\rho_\theta(s-t)\varphi_\varepsilon(x,y) \mathrm{d}x\mathrm{d}t\mathrm{d}y\mathrm{d}s,
\end{equation}
where $\rho_\theta(\cdot)\coloneqq\theta^{-1}\rho(\theta^{-1}\cdot)$ defined before \Cref{sec:Entropy-formulation} is a time mollifier, and $\varphi_\varepsilon\in C^\infty(\overline{D}\times \overline{D})$ is a spatial mollifier which satisfies $\lim_{\varepsilon\rightarrow 0^+}\varphi_\varepsilon(x,y)=\delta_0(x-y)$ for all $(x,y)\in \overline{D}\times 
\overline{D}$. When $\theta,\varepsilon\rightarrow 0^+$, we have the estimate for $\mathbb{E}\int_{D_T}|u(t,x)-\tilde{u}(t,x)|\mathrm{d}x\mathrm{d}t$.

How to select a suitable spatial mollifier is important but quite subtle: this is standard when $x$ is an interior point (see, e.g., \cite[proof of Proposition 4.2]{dareiotis2020ergodicity}), but much more complicated when $x$ is on the boundary.
Following the idea from \cite{carrillo1999entropy,bauzet2014dirichlet}, we introduce a partition of unity $\sum_{i=1}^N\psi_i\equiv1$ on $\overline{D}$ as the set of localization functions, and then choose corresponding mollifiers $\varrho_{\varepsilon,i}\in C^\infty(\mathbb{R}^d)$ such that $\text{supp}\,\varrho_{\varepsilon,i}(x-\cdot)\subset D$ for all $x\in\text{supp}\,\psi_i$ and sufficiently small $\varepsilon$. 
Then, the spatial mollifier $\varphi_\varepsilon(x,y)\coloneqq\sum^N_{i=1} \psi_i(x)\varrho_{\varepsilon,i}(x-y)$ satisfies
\[
\lim_{\varepsilon\rightarrow 0^+}\varphi_\varepsilon(x,y)=\lim_{\varepsilon\rightarrow 0^+}\sum^N_{i=1} \psi_i(x)\varrho_{\varepsilon,i}(x-y)=\delta_0(x-y),\quad\forall x,y\in \overline{D}.
\]
It is worth noting that this mollifier is asymmetric in the spatial variables, specifically, for all sufficiently small $\varepsilon$, 
\[
\varphi_\varepsilon(x,\cdot)\in C_c^\infty(D),\  \forall x\in\overline{D},\quad  \text{but} \quad \varphi_\varepsilon(\cdot,y)\in C^\infty(\overline{D}),\ \forall y\in\overline{D}.
\]

Consequently, this asymmetry makes it difficult to estimate \cref{eq:formalKruzhkov}. Instead, we respectively estimate both
\begin{align}
&\mathbb{E}\int_{D_T}\int_{D_T}\big(u(t,x)-\tilde{u}(s,y)\big)^+\rho_\theta(s-t)\varphi_\varepsilon(x,y) \mathrm{d}t\mathrm{d}x\mathrm{d}s\mathrm{d}y,\quad \text{and}\label{eq:formalKruzhkovpositivepart}\\
&\mathbb{E}\int_{D_T}\int_{D_T}\big(\tilde{u}(t,x)-u(s,y)\big)^+\rho_\theta(s-t)\varphi_\varepsilon(x,y) \mathrm{d}t\mathrm{d}x\mathrm{d}s\mathrm{d}y.\nonumber
\end{align} 
Actually, when directly applying the methods in \cite{dareiotis2019entropy,dareiotis2020ergodicity,dareiotis2020nonlinear}, the estimate of \cref{eq:formalKruzhkov} contains the following terms
\begin{align}
&\mathbb{E}\int_0^T\int_{D}\int_{D}\partial_{x_{i}x_{j}}\varphi_\varepsilon(x,y)\int_0^{u(t,x)} a^{ij}(x,r)\text{sgn}(r-\tilde{u}(t,y))\mathrm{d}r\mathrm{d}x\mathrm{d}y\mathrm{d}t\label{eq:integrate by part term}\\
&\quad+\mathbb{E}\int_0^T\int_{D}\int_{D}\partial_{x_{i}}\varphi_\varepsilon(x,y)\int_0^{u(t,x)} a_{x_j}^{ij}(x,r)\text{sgn}(r-\tilde{u}(t,y))\mathrm{d}r\mathrm{d}x\mathrm{d}y\mathrm{d}t,\nonumber
\end{align}
which may not be equal to
\begin{align}
&\mathbb{E}\int_0^T\int_{D}\int_{D}\partial_{x_{i}x_{j}}\varphi_\varepsilon(x,y)\int_{\tilde{u}(t,y)}^{u(t,x)} a^{ij}(x,r)\text{sgn}(r-\tilde{u}(t,y))\mathrm{d}r\mathrm{d}x\mathrm{d}y\mathrm{d}t\label{eq:integrate by parts term after}\\
&\quad+\mathbb{E}\int_0^T\int_{D}\int_{D}\partial_{x_{i}}\varphi_\varepsilon(x,y)\int_{\tilde{u}(t,y)}^{u(t,x)} a_{x_j}^{ij}(x,r)\text{sgn}(r-\tilde{u}(t,y))\mathrm{d}r\mathrm{d}x\mathrm{d}y\mathrm{d}t.\nonumber
\end{align}
The reason for this discrepancy is that when formally applying the divergence theorem in $x$ to the first term in \cref{eq:integrate by part term}, the boundary term does not vanish due to the asymmetry of the spatial mollifier, and estimating it when $\varepsilon\rightarrow 0^+$ is challenging due to the potential absence of a trace for $u$ caused by its low regularity. While for the case of \cref{eq:formalKruzhkovpositivepart}, this equivalence is guaranteed by the support of $\eta(\cdot)=(\cdot-\tilde{u}(s,y))^+$.

Note that the time mollifier is also asymmetric and always requires $s>t$.
For this, we have an important observation: we only need the $(\star)$-property of the function at the larger time variable $s$. However, when applying this observation to  \cref{eq:formalKruzhkovpositivepart}, it necessitates the $(\star)$-property of both $u$ and $\tilde{u}$, which is not what we desire. 
Therefore, we turn to estimate
\begin{align}
&\mathbb{E}\int_{D_T}\int_{D_T}\big(u(t,x)-\tilde{u}(s,y)\big)^+\rho_\theta(s-t)\varphi_\varepsilon(x,y) \mathrm{d}t\mathrm{d}x\mathrm{d}s\mathrm{d}y,\quad\text{and}\label{eq:formalKruzhkovpositivepartchangetime}\\
&\mathbb{E}\int_{D_T}\int_{D_T}\big(\tilde{u}(s,x)-u(t,y)\big)^+\rho_\theta(s-t)\varphi_\varepsilon(x,y) \mathrm{d}t\mathrm{d}x\mathrm{d}s\mathrm{d}y,\nonumber
\end{align}
which only use $\tilde{u}$ at time $s$ and thus require the $(\star)$-property of $\tilde{u}$. 
Since these two terms are asymmetric, we adapt the definition of the $(\star)$-property for both formulations in \cref{eq:formalKruzhkovpositivepartchangetime} (see the definition of the test functions at the beginning of the \Cref{sec:star-property}). Fortunately, the entropy solutions constructed from the vanishing viscosity approximation satisfy this modified $(\star)$-property, given a stronger integrability condition on the initial data in $\omega$.

\subsection*{Construction of the spatial mollifier}
Now, we give the specific construction of the spatial mollifier. Define
\[
\text{dist}(A_{1},A_{2})\coloneqq\inf_{x_{1}\in A_{1},\ x_{2}\in A_{2}}|x_{1}-x_{2}|,\quad A_{1},A_{2}\subset\mathbb{R}^{d}.
\]
Fix an open covering of $\partial D$ by balls $\{B^\prime_j\}_{j=1}^{N^\prime}$ which satisfy that $B_{j}^{\prime}\cap\partial D$
is part of a Lipschitz graph for each $j =1,\ldots,N^\prime$.
Choose an open covering of $\overline{D}$ by $B_{0}$ and balls
$\{B_{i}\}_{i=1}^{N}$ satisfying $\text{dist}(B_{0},\partial D)>0$,
and for each $i>0$, there exists $j\in\{1,\ldots,N^\prime\}$ such that $B_{i}\subset B_{j}^{\prime}$ and $\text{dist}(B_{i},\partial B_{j}^{\prime})>0$.

From \cite[Lemma 9.3]{brezis2011functional}, we know that there exist
functions $\psi_{0},\psi_{1},\ldots,\psi_{N}\in C^{\infty}(\mathbb{R}^{d})$
such that $0\leq\psi_{i}\leq1$, $\text{supp}\, \psi_{i}\subseteq B_{i}$
for $i=0,1,\ldots,N$, and 
\[
\sum_{i=0}^{N}\psi_{i}(x)\equiv1,\quad x\in\overline{D}.
\]
Without loss of generality, we assume $\text{dist}(\text{supp}\,\psi_{i},\partial B_{i})>0$ for all $i=0,1,\ldots,N$. Otherwise, we
consider larger open domains, also denoted as $B_{i}$ for convenience, satisfying $\text{dist}(B_{0},\partial D)>0$ 
and $\text{dist}(B_{i},\partial B_{j}^{\prime})>0$ for at least one $j\in\{1,\ldots,N^\prime\}$. 

For $x=(x_{1},x_{2},\ldots,x_{d})\in\mathbb{R}^{d}$, define the function
$\tilde{\varrho}(x)\coloneqq\prod_{i=1}^{d}\rho(x_{i}-1/2)$ and the
mollifier $\tilde{\varrho}_{\varepsilon}(\cdot)\coloneqq\tilde{\varrho}(\cdot/\varepsilon)/(\varepsilon^{d})$.
Similar to \cite[Section 3.2.1]{bauzet2014dirichlet}, for each $i=1,\ldots,N$,
there exist a constant $\varepsilon_{i}$ and a vector $\tilde{\eta}_{i}\in\mathbb{R}^{d}$,
such that the translated sequence of mollifiers $\varrho_{\varepsilon,i}(\cdot)\coloneqq\tilde{\varrho}_{\varepsilon}(\cdot-\tilde{\eta}_{i})$
satisfying $y\mapsto\varrho_{\varepsilon,i}(x-\cdot)\in C_{c}^{\infty}(D)$
for all $(x,\varepsilon)\in(B_{i}\cap\overline{D})\times(0,\varepsilon_{i})$.
The vector $\tilde{\eta}_{i}$ depends only on the local representation
of the boundary of $D$ in $B_{i}^{\prime}$ as the graph of a Lipschitz
function. We also define $\varrho_{\varepsilon,0}(\cdot)\coloneqq\tilde{\varrho}_{\varepsilon}(\cdot)$.
\begin{rem}
\label{rem:support of rho}From the construction of the function $\varrho_{\varepsilon,i}$,
there exists a constant $\tilde{K}$ depending on the maximum norm of $\eta_{i}$,
$i=1,\ldots,N$, such that $\text{supp}\,\varrho_{\varepsilon,i}\subset\{x\in\mathbb{R}^{d}:|x|<\tilde{K}\varepsilon\}$
holds for all $\varepsilon\in(0,\varepsilon_{i})$ and $i=0,1,\ldots,N$.
\end{rem}

\begin{rem}
\label{rem:support of rho y}For each $i=1,\ldots,N$, since $\text{dist}(\text{supp}\, \psi_{i},\partial B_{i})>0$,
there exists a $\bar{\varepsilon}_{i}\in(0,\varepsilon_{i})$ such
that $\text{supp}\, \varrho_{\varepsilon,i}(x-\cdot)\subset B_{i}\cap D$
for all $(x,\varepsilon)\in(\text{supp}\, \psi_{i}\cap\overline{D})\times(0,\bar{\varepsilon}_{i})$.
Moreover, there exists a $\varepsilon_{0}\in(0,1)$ such that $\text{supp}\, \varrho_{\varepsilon,0}(x-\cdot)\subset D$
for all $(x,\varepsilon)\in B_{0}\times(0,{\varepsilon}_{0})$. In the rest of this article, we define 
\[
\bar{\varepsilon}\coloneqq\min\{\varepsilon_{0},\varepsilon_{1},\bar{\varepsilon}_{1},\varepsilon_{2},\bar{\varepsilon}_{2},\ldots,\varepsilon_{N},\bar{\varepsilon}_{N}\}.
\]
\end{rem}

\subsection{$(\star)$-property\label{sec:star-property}}
For $i \in\{1,\ldots,N\}$, define the sets
\begin{align*}
\mathcal{C}^{-}&\coloneqq\{f\in C^{\infty}(\mathbb{R}):f^{\prime}\in C_{c}(\mathbb{R}),\ \text{supp}\, f\subseteq(-\infty,0]\},\\
\mathcal{C}^{+}&\coloneqq\{f\in C^{\infty}(\mathbb{R}):f^{\prime}\in C_{c}(\mathbb{R}),\ \text{supp}\, f\subseteq(0,\infty)\},\\
\Gamma_{B_i}^{-} &\coloneqq\{f\in C^{\infty}(\overline{D}\times\overline{D}):\text{supp}\, f\subset(B_i\cap\overline{D})\times(B_i\cap\overline{D}),\\
&\quad\ \ f(x,\cdot)\in C_{c}^{\infty}(D),\ \forall x\in B_i\cap\overline{D}\},\\
\Gamma_{B_i}^{+} &\coloneqq\{f\in C^{\infty}(\overline{D}\times\overline{D}):\text{supp}\, f\subset(B_i\cap\overline{D})\times(B_i\cap\overline{D}),\\
&\quad\ \ f(\cdot,y)\in C_{c}^{\infty}(D),\ \forall y\in B_i\cap\overline{D}\}.
\end{align*}
Let $(g,\varphi,u,h)\in(\Gamma_{B_i}^{-}\cup\Gamma_{B_i}^{+})\times C_{c}^{\infty}((0,T))\times L_{m+1}(\Omega_{T};L_{m+1}(D))\times(\mathcal{C}^{-}\cup\mathcal{C}^{+})$.
For $\theta>0$, we introduce
\begin{align*}
\phi_{\theta}(t,s,x,y)\coloneqq g(x,y)\rho_{\theta}(s-t)\varphi(\frac{t+s}{2}),
\end{align*}
and
\begin{align*}
H_{\theta}(s,x,z) & \coloneqq\int_{0}^{T}\int_{y}\bigg(h(u(t,y)-z)\sigma_{y_{i}}^{ik}(y,u(t,y))\phi_{\theta}(t,s,x,y)\\
 & \quad-\int_{0}^{u(t,y)}h(r-z)\sigma_{ry_{i}}^{ik}(y,r)\mathrm{d}r\phi_{\theta}(t,s,x,y)\\
 & \quad-\int_{0}^{u(t,y)}h(r-z)\sigma_{r}^{ik}(y,r)\mathrm{d}r\partial_{y_{i}}\phi_{\theta}(t,s,x,y)\bigg)\mathrm{d}W^{k}(t),
\end{align*}
\begin{align*}
\mathcal{E}(u,w,\theta) & \coloneqq-\mathbb{E}\int_{t,s,x,y}\partial_{x_{j}y_{i}}\phi_{\theta}(t,s,x,y)\int_{u}^{w}\int_{\tilde{r}}^{u}h^{\prime}(r-\tilde{r})\sigma_{r}^{ik}(y,r)\sigma_{r}^{jk}(x,\tilde{r})\mathrm{d}r\mathrm{d}\tilde{r}\\
 & \quad-\mathbb{E}\int_{t,s,x,y}\partial_{y_{i}}\phi_{\theta}(t,s,x,y)\int_{u}^{w}\int_{\tilde{r}}^{u}h^{\prime}(r-\tilde{r})\sigma_{r}^{ik}(y,r)\sigma_{rx_{j}}^{jk}(x,\tilde{r})\mathrm{d}r\mathrm{d}\tilde{r}\\
 & \quad+\mathbb{E}\int_{t,s,x,y}\partial_{y_{i}}\phi_{\theta}(t,s,x,y)\int_{w}^{u}h^{\prime}(r-w)\sigma_{r}^{ik}(y,r)\sigma_{x_{j}}^{jk}(x,w)\mathrm{d}r\\
 & \quad-\mathbb{E}\int_{t,s,x,y}\partial_{x_{j}}\phi_{\theta}(t,s,x,y)\int_{u}^{w}\int_{\tilde{r}}^{u}h^{\prime}(r-\tilde{r})\sigma_{ry_{i}}^{ik}(y,r)\sigma_{r}^{jk}(x,\tilde{r})\mathrm{d}r\mathrm{d}\tilde{r}\\
 & \quad-\mathbb{E}\int_{t,s,x,y}\phi_{\theta}(t,s,x,y)\int_{u}^{w}\int_{\tilde{r}}^{u}h^{\prime}(r-\tilde{r})\sigma_{ry_{i}}^{ik}(y,r)\sigma_{rx_{j}}^{jk}(x,\tilde{r})\mathrm{d}r\mathrm{d}\tilde{r}\\
 & \quad+\mathbb{E}\int_{t,s,x,y}\phi_{\theta}(t,s,x,y)\int_{w}^{u}h^{\prime}(r-w)\sigma_{ry_{i}}^{ik}(y,r)\sigma_{x_{j}}^{jk}(x,w)\mathrm{d}r\\
 & \quad+\mathbb{E}\int_{t,s,x,y}\partial_{x_{j}}\phi_{\theta}(t,s,x,y)\int_{u}^{w}h^{\prime}(u-\tilde{r})\sigma_{y_{i}}^{ik}(y,u)\sigma_{r}^{jk}(x,\tilde{r})\mathrm{d}\tilde{r}\\
 & \quad+\mathbb{E}\int_{t,s,x,y}\phi_{\theta}(t,s,x,y)\int_{u}^{w}h^{\prime}(u-\tilde{r})\sigma_{y_{i}}^{ik}(y,u)\sigma_{rx_{j}}^{jk}(x,\tilde{r})\mathrm{d}\tilde{r}\\
 & \quad-\mathbb{E}\int_{t,s,x,y}\phi_{\theta}(t,s,x,y)h^{\prime}(u-w)\sigma_{y_{i}}^{ik}(y,u)\sigma_{x_{j}}^{jk}(x,w),
\end{align*}
where in the integrand $u=u(t,y)$ and $w=w(t,x)$. For simplicity,
here and below we write $\int_{t}$ in place of $\int_{0}^{T}\cdot\mathrm{d}t$
(and similarly for $\int_{s}$), $\int_{x}$ in place of $\int_{D}\cdot\mathrm{d}x$
(and similarly for $\int_{y}$), and $\int_{z}$ in place of $\int_{\mathbb{R}}\cdot\mathrm{d}z$.
However, to avoid confusion, we use the usual notation if the integral
is taken on a different domain or is a stochastic integral. 
\begin{rem}
\label{rem:Htheta format}Since $\text{supp}\,\varphi\subset(0,T)$,
for a sufficiently small $\theta$, the function $H_{\theta}$ has a
similar form as \cite[(3.7)]{dareiotis2019entropy}. The definition of the sets $\Gamma_{B_i}^{+}$ and $\Gamma_{B_i}^{-}$ indicates $\text{supp}\,H_{\theta}(s,\cdot,z)\subset B_i\cap\overline{D}$
for all $(s,z)\in[0,T]\times\mathbb{R}$. Furthermore, there exists
a modification of $H_{\theta}$ which is smooth in $(s,x,z)$ (see
\cite[Exercise 3.15]{kunita1997stochastic}). Throughout this paper,
we will use this smooth version and still denote it by $H_{\theta}$.
\end{rem}

Fix a constant $\mu\coloneqq(3m+5)/(4m+4)$, which is chosen so that $\mu\in((m+3)/(2m+2),1)$.
\begin{defn}
\label{def:star property}We say that a function $w\in L_{m+1}(\Omega_{T};L_{m+1}(D))$
has the $(\star)$-property, if 
\begin{enumerate}[label=(\roman*)]
\item For each $i\in\{0,1,\ldots,N\}$ and all 
\[(g,\varphi,u,h)\in\Gamma_{B_i}^{-}\times C_{c}^{\infty}((0,T))\times L_{m+1}(\Omega_{T};L_{m+1}(D))\times\mathcal{C}^{-}\]
satisfying $u\geq0$ for almost all $(\omega,t,x)\in\Omega_{T}\times D$,
and for all sufficiently small $\theta\in(0,1)$, we have $H_{\theta}(\cdot,\cdot,w(\cdot,\cdot))\in L_{1}(\Omega_{T}\times D)$
and
\[
\mathbb{E}\int_{s,x}H_{\theta}(s,x,w(s,x))\leq C\theta^{1-\mu}+\mathcal{E}(u,w,\theta)
\]
for a constant $C$ independent of $\theta$.

\item For each $i\in\{0,1,\ldots,N\}$ and all \[(g,\varphi,u,h)\in\Gamma_{B_i}^{+}\times C_{c}^{\infty}((0,T))\times L_{m+1}(\Omega_{T};L_{m+1}(D))\times\mathcal{C}^{+}\]
and sufficiently small $\theta\in(0,1)$, we have $H_{\theta}(\cdot,\cdot,w(\cdot,\cdot))\in L_{1}(\Omega_{T}\times D)$
and
\begin{equation*}
\mathbb{E}\int_{s,x}H_{\theta}(s,x,w(s,x))\leq C\theta^{1-\mu}+\mathcal{E}(u,w,\theta)
\end{equation*}
for a constant $C$ independent of $\theta$.
\end{enumerate}
\end{defn}

\begin{rem}\label{rem:adjust star-property}
Differing from the $(\star)$-property in \cite{dareiotis2019entropy,dareiotis2020ergodicity,dareiotis2020nonlinear},
we adjust test functions $g$, $u$ and $h$ to apply the divergence
theorem with the Dirichlet boundary condition (see the proof of
\cref{prop:u_n star property}). \Cref{rem:star property in Lemma of L1}
 provides the justification for our distinct consideration of assertions (i) and (ii) in \cref{def:star property}.
\end{rem}

\begin{rem}
If $g\in \Gamma_{B_i}^{+}$, define $\tilde{g}(x,y)\coloneqq g(y,x)$, then we have $\tilde{g}\in \Gamma_{B_i}^{-}$. Moreover, if we relabel $x\leftrightarrow y$ in assertion (ii) of \cref{def:star property}, it becomes an estimate to
\begin{align*}
&\mathbb{E}\int_{s,y}H_{\theta}(s,y,w(s,y))\\
 &= \mathbb{E}\int_{s,y}\Bigg[\int_{0}^{T}\int_{x}\bigg(h(u(t,x)-z)\sigma_{x_{i}}^{ik}(x,u(t,x))\tilde{g}(x,y)\rho_{\theta}(s-t)\varphi(\frac{t+s}{2})\\
 & \quad-\int_{0}^{u(t,x)}h(r-z)\sigma_{rx_{i}}^{ik}(x,r)\mathrm{d}r\tilde{g}(x,y)\rho_{\theta}(s-t)\varphi(\frac{t+s}{2})\\
 & \quad-\int_{0}^{u(t,x)}h(r-z)\sigma_{r}^{ik}(x,r)\mathrm{d}r\partial_{x_{i}}\tilde{g}(x,y)\rho_{\theta}(s-t)\varphi(\frac{t+s}{2})\bigg)\mathrm{d}W^{k}(t)\Bigg]_{z=w(s,y)}.
\end{align*}
This is used in the proof of \cref{lem:u-=00005Ctildeu} for the case that $u$ has the $(\star)$-property.

\end{rem}

The following lemmas are introduced from \cite{dareiotis2019entropy,dareiotis2020nonlinear},
and the proofs are similar no matter the space is $D$ or $\mathbb{T}^{d}$.
We omit the proofs here.
\begin{lem}
\label{lem:partial H}Under \cref{eq:sigma x}-\cref{eq:sigma r}
in \cref{assu:Coefficients}, for all $\lambda\in((m+3)/(2m+2),1)$,
$\bar{k}\in\mathbb{N}$ and sufficiently small $\theta\in(0,1)$,
we have 
\begin{gather*}
\mathbb{E}\Vert\partial_{z}H_{\theta}\Vert_{L_{\infty}([0,T];H_{m+1}^{\bar{k}}(D\times\mathbb{R}))}^{m+1}\leq C\theta^{-\lambda(m+1)}\mathcal{N}_{m}(u),
\end{gather*}
where
\[
\mathcal{N}_{m}(u)\coloneqq\mathbb{E}\int_{0}^{T}\Big(1+\Vert u(t)\Vert_{L_{\frac{m+1}{2}}(D)}^{m+1}+\Vert u(t)\Vert_{L_{2}(D)}^{m+1}\Big)\mathrm{d}t,
\]
and the constant $C=C(N_{0},k,d,T,\lambda,|D|,m,h,g,\varphi)$ is
independent of $\theta$. In particular, we have
\begin{align*}
&\mathbb{E}\Vert\partial_{z}H_{\theta}\Vert_{L_{\infty}([0,T];H_{m+1}^{\bar{k}}(D\times\mathbb{R}))}^{m+1}\leq C\theta^{-\lambda(m+1)}\Big(1+\Vert u\Vert_{L_{m+1}(D_{T})}^{m+1}\Big).
\end{align*}
\end{lem}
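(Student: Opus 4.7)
The plan is to follow the argument used in the Cauchy setting \cite{dareiotis2019entropy,dareiotis2020nonlinear} essentially verbatim, the key observation being that every estimate is pointwise in $y\in D$, so the change of spatial domain from $\mathbb{T}^{d}$ to $D$ plays no role in the proof. The three ingredients are: differentiation under the stochastic integral, a Burkholder--Davis--Gundy (BDG) inequality combined with a fractional Sobolev embedding in $s$, and H\"older's inequality in $y$ together with the coefficient bounds from \cref{assu:Coefficients}.

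First I would differentiate $H_{\theta}$ in $(x,z)$ under the It\^o integral. Each $z$-derivative produces a factor $h^{(j)}(u(t,y)-z)$ which is compactly supported in $z$ uniformly in $\theta$ and $u$ since $h'\in C_{c}(\mathbb{R})$; hence $z$-norms on any derivative reduce to $L_{\infty}$-norms on a fixed compact $z$-set. Derivatives in $x$ up to order $\bar{k}$ fall only on $g(x,y)$ inside $\phi_{\theta}$ and produce a constant $C(g,\bar{k})$. The resulting finite list of It\^o integrals has integrands pointwise bounded by $h$-dependent constants times $\sigma$-and-$u$ expressions. Only this list needs to be controlled in $L_{m+1}(\Omega;L_{\infty}([0,T];H_{m+1}^{\bar{k}}(D\times\mathbb{R})))$.

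Second, to remove $\sup_{s}$, I would invoke the fractional Sobolev embedding $W^{\alpha,m+1}([0,T])\hookrightarrow L_{\infty}([0,T])$, valid for $\alpha>1/(m+1)$, and estimate the $W^{\alpha,m+1}$-norm using BDG. The crux is the interpolation bound $|\rho_{\theta}(s_{1}-t)-\rho_{\theta}(s_{2}-t)|\le C\theta^{-1-\kappa}|s_{1}-s_{2}|^{\kappa}$ for any $\kappa\in[0,1]$, obtained from $\|\rho_{\theta}\|_{\infty}\le C\theta^{-1}$ and $\|\rho_{\theta}'\|_{\infty}\le C\theta^{-2}$. Combined with the fact that the effective $t$-integration interval has length $\theta$, BDG yields
\[
\mathbb{E}\,\|\partial_{z}H_{\theta}(s_{1})-\partial_{z}H_{\theta}(s_{2})\|_{H_{m+1}^{\bar{k}}}^{m+1}\le C\theta^{(m+1)/2-(1+\kappa)(m+1)}|s_{1}-s_{2}|^{\kappa(m+1)}\mathcal{N}_{m}(u),
\]
and an appropriate choice of $\alpha$ just above $1/(m+1)$ and $\kappa$ just below $1/2$ produces the prefactor $\theta^{-\lambda(m+1)}$ for any $\lambda>(m+3)/(2m+2)$, as required.

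Finally, the factor $\mathcal{N}_{m}(u)$ arises from pointwise bounds on the integrand. By \cref{eq:sigma x} and \cref{eq:sigma r} in \cref{assu:Coefficients}, $\sigma_{r}$, $\sigma_{ry}$ and $\sigma_{rx}$ are bounded, while $\sigma_{y}(y,u)$ grows at most linearly in $u$. H\"older's inequality in $y$, the normalization $\int\rho_{\theta}=1$, and the compact support of $h$ or $h'$ then produce precisely the $L_{2}(D)$ and $L_{(m+1)/2}(D)$ norms of $u(t)$ collected in $\mathcal{N}_{m}(u)$. The second displayed inequality follows from $\|u(t)\|_{L_{p}(D)}\le|D|^{1/p-1/(m+1)}\|u(t)\|_{L_{m+1}(D)}$ for $p\le m+1$. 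The main obstacle is the delicate bookkeeping of $\theta$-exponents in the BDG/Sobolev step, namely balancing $\alpha$ against $\kappa$ so that $\lambda$ can be pushed arbitrarily close to $(m+3)/(2m+2)$ from above; once this optimization is carried out the rest is routine.
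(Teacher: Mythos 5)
Your approach is the right one, and is indeed the same route taken in \cite{dareiotis2019entropy} (the paper explicitly omits this proof, citing that the argument is unchanged when $\mathbb{T}^{d}$ is replaced by $D$). The three ingredients you list — differentiation under the It\^{o} integral, BDG plus the fractional Sobolev embedding $W^{\alpha,m+1}([0,T])\hookrightarrow L_{\infty}([0,T])$ for $\alpha>1/(m+1)$, and coefficient bounds with H\"older in $y$ — are exactly what is needed, and the observation that the $x$-derivatives land only on $g(x,y)$ while the $z$-derivatives produce compactly supported $h^{(j)}$ is correct. The displayed increment estimate with prefactor $\theta^{(m+1)/2-(1+\kappa)(m+1)}\,|s_1-s_2|^{\kappa(m+1)}$ is, after simplification, the same exponent one gets from the sharp argument.

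There are, however, two places where the sketch is off. First, to land on that exponent (rather than an exponent worse by a factor $\theta$) you cannot simply bound $|\rho_\theta(s_1-\cdot)-\rho_\theta(s_2-\cdot)|^2$ in $L_\infty$ and use the length-$\theta$ support together with H\"older in $t$; that yields only $\theta^{(m-1)/2}$, hence the restriction $\lambda>(m+5)/(2m+2)$. The sharp prefactor requires $|\rho_\theta(s_1-t)-\rho_\theta(s_2-t)|^2\le C\theta^{-1-\kappa'}|s_1-s_2|^{\kappa'}\bigl(\rho_\theta(s_1-t)+\rho_\theta(s_2-t)\bigr)$ with $\kappa'\in[0,1]$, followed by Jensen's inequality against the probability density $\rho_\theta(s_i-\cdot)$ to pull the power $(m+1)/2$ inside the $t$-integral; this gives $\theta^{-(1+\kappa')(m+1)/2}|s_1-s_2|^{\kappa'(m+1)/2}$, which matches your formula with $\kappa=\kappa'/2$. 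You allude to the normalization $\int\rho_\theta=1$ at the end, but it must enter the BDG/Sobolev step itself, not only the treatment of the $u$-norms. Second, the optimization direction is reversed: to cover every $\lambda>(m+3)/(2m+2)$ you must take $\kappa$ just \emph{above} $1/(m+1)$ (with $\alpha\in(1/(m+1),\kappa)$), not just below $1/2$; taking $\kappa$ near $1/2$ gives only the endpoint $\lambda=1$. A minor further point: the $z$-support of $\partial_z\!\int_0^{u}h(r-z)\,\mathrm{d}r$ is not a fixed compact set but has length $O(1+|u(t,y)|)$, so the $L_{m+1}(\mathbb{R}_z)$-norm contributes an extra $(1+|u|)^{1/(m+1)}$ factor rather than reducing to an $L_\infty$-norm on a fixed set; this is harmless, being absorbed into $\mathcal{N}_m(u)$, but should be stated as such.
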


\begin{lem}
\label{lem:appro for Htheta}Let $w\in L_{2}(\Omega_{T}\times D)$.
Then, for all sufficiently small $\theta\in(0,1)$, we have 
\begin{gather*}
\mathbb{E}\int_{s,x}H_{\theta}(s,x,w(s,x))=\lim_{\lambda\rightarrow0}\mathbb{E}\int_{s,x,z}H_{\theta}(s,x,z)\rho_{\lambda}(w(s,x)-z).
\end{gather*}
\end{lem}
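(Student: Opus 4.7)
The plan is a two-step argument: first establish pointwise convergence in $(\omega,s,x)$ of the mollified quantity to $H_\theta(s,x,w(s,x))$, and then invoke dominated convergence to interchange the limit with $\mathbb{E}\int_{s,x}$.

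For the pointwise step, I work with the smooth modification of $H_\theta$ guaranteed by \cref{rem:Htheta format}, so that for $\mathbb{P}$-almost every $\omega$ the map $z\mapsto H_\theta(\omega,s,x,z)$ is continuous for every $(s,x)$. Since $\rho_\lambda$ is a non-negative smooth function of unit mass supported in $(0,\lambda)$, the quantity $\int_z H_\theta(s,x,z)\rho_\lambda(w(s,x)-z)$ is a weighted average of $H_\theta(s,x,\cdot)$ over $(w(s,x)-\lambda,w(s,x))$, and standard one-sided mollification theory yields its convergence to $H_\theta(s,x,w(s,x))$ as $\lambda\to 0^+$.

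The main technical step is constructing an integrable dominant. I choose $\bar{k}$ large enough that $H_{m+1}^{\bar{k}}(D\times\mathbb{R})\hookrightarrow C_b^0(D\times\mathbb{R})$ by Sobolev embedding; \cref{lem:partial H} then gives $\mathbb{E}\|\partial_z H_\theta\|_{L_\infty([0,T];L_\infty(D\times\mathbb{R}))}^{m+1}<\infty$. The fundamental theorem of calculus yields
$$|H_\theta(s,x,z)|\leq |H_\theta(s,x,0)|+|z|\,\|\partial_z H_\theta(\omega,s,\cdot,\cdot)\|_{L_\infty(D\times\mathbb{R})},$$
so for $\lambda\in(0,1)$ the mollified quantity is dominated by $|H_\theta(s,x,0)|+(|w(s,x)|+1)\|\partial_z H_\theta\|_{L_\infty}$. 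The It\^{o} isometry controls the first piece in $L_2(\Omega_T\times D)$, while H\"{o}lder's inequality, combined with $w\in L_2(\Omega_T\times D)\subset L_{(m+1)/m}(\Omega_T\times D)$ (valid since $m>1$ and $\Omega_T\times D$ has finite measure), bounds the cross term against the $L_{m+1}$ moment of $\|\partial_z H_\theta\|_{L_\infty}$.

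With the dominant in hand, dominated convergence yields the claimed identity. The main obstacle is precisely this dominating bound---using Sobolev embedding and \cref{lem:partial H} to control $H_\theta$ pointwise in $z$ by a function that is integrable over $\Omega_T\times D$ in spite of the weak integrability hypothesis on $w$; after that, the remaining steps are routine mollifier theory.
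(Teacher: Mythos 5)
Your argument is correct, and it is the standard route: the paper omits the proof, referring to the Cauchy-problem references, and those references do essentially what you do—pointwise convergence of the one-sided mollification using the smooth modification of $H_\theta$, then dominated convergence with a dominant built from Lemma~\ref{lem:partial H}, Sobolev embedding (with $\bar k$ chosen so that $\bar k(m+1)>d+1$, so that $H^{\bar k}_{m+1}(D\times\mathbb{R})\hookrightarrow L_\infty(D\times\mathbb{R})$), the fundamental theorem of calculus in $z$, and H\"older with conjugate exponents $m+1$ and $(m+1)/m$ against $w\in L_2\subset L_{(m+1)/m}$. The one place you stated a fact without justification is the integrability of the baseline $H_\theta(\cdot,\cdot,0)$ over $\Omega_T\times D$; this does follow from the It\^o isometry as you say, because (with $h$ bounded, $\sigma_r,\sigma_{rx}$ bounded, $\sigma_{x_i}^{i}$ of linear growth, and $g,\varphi$ compactly supported) the $\ell^2$-norm of the integrand is bounded by $C\rho_\theta(s-t)\,(1+\|u(t,\cdot)\|_{L_2(D)})$, which is square-integrable on $\Omega\times[0,T]$ uniformly in $(s,x)$ since $u\in L_{m+1}\subset L_2$, so $\sup_{s,x}\mathbb{E}|H_\theta(s,x,0)|^2<\infty$ for fixed $\theta$.
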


\begin{lem}
\label{lem:Limit star property}Let $\{w_{n}\}_{n\in\mathbb{N}}$
be a sequence bounded in $L_{m+1}(\Omega_{T}\times D)$ satisfying
the $(\star)$-property uniformly in $n$, which means the constant
$C$ in \cref{def:star property} (i) and (ii) are independent
of $n$. Suppose that $w_{n}$ converges to a function $w$ for almost
all $(\omega,t,x)\in\Omega_{T}\times D$. Then, $w$ has the $(\star)$-property.
\end{lem}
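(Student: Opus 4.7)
The plan is to apply the $(\star)$-property to each $w_n$---with a fixed admissible quadruple $(g,\varphi,u,h)$ (for either assertion (i) or (ii) of \cref{def:star property}) and sufficiently small $\theta\in(0,1)$---obtaining
\[
\mathbb{E}\int_{s,x}H_\theta(s,x,w_n(s,x)) \leq C\theta^{1-\mu} + \mathcal{E}(u,w_n,\theta),
\]
with $C$ independent of $n$, and then to pass to the limit $n\to\infty$ in both sides. A crucial structural remark is that $H_\theta$ itself depends only on $u$ (not on $w$ or $w_n$); the sequence $w_n$ enters only as the third argument in the outer integral, so the passage to the limit is really a question of controlling a nonlinear substitution.

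For the left-hand side, I would use the mean value theorem in the third argument to obtain
\[
|H_\theta(s,x,w_n(s,x))-H_\theta(s,x,w(s,x))|\leq \|\partial_z H_\theta(s,x,\cdot)\|_{L_\infty(\mathbb{R})}\,|w_n(s,x)-w(s,x)|.
\]
Choosing $\bar k$ large enough in \cref{lem:partial H} so that the Sobolev embedding $H^{\bar k}_{m+1}(D\times\mathbb{R})\hookrightarrow L_\infty(D\times\mathbb{R})$ holds, one controls $\mathbb{E}\|\partial_z H_\theta\|_{L_\infty(D_T\times\mathbb{R})}^{m+1}$ by a finite constant depending on $\theta$ but not on $n$. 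Combined with the pointwise convergence $w_n\to w$ and the uniform $L_{m+1}(\Omega_T\times D)$-boundedness of $\{w_n\}$, this gives uniform integrability of $H_\theta(\cdot,\cdot,w_n(\cdot,\cdot))$, and Vitali's convergence theorem yields
\[
\mathbb{E}\int_{s,x}H_\theta(s,x,w_n(s,x))\ \longrightarrow\ \mathbb{E}\int_{s,x}H_\theta(s,x,w(s,x)),
\]
which in particular verifies $H_\theta(\cdot,\cdot,w(\cdot,\cdot))\in L_1(\Omega_T\times D)$, i.e.\ the integrability clause in \cref{def:star property}.

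For the right-hand side, the main observation is that each of the integrands in $\mathcal{E}(u,w_n,\theta)$ involves $h'$ (compactly supported) and derivatives of $\sigma$ that are uniformly bounded under \cref{assu:Coefficients}. Consequently, the iterated integrals in $r,\tilde r$ produce integrands pointwise controlled by (bounded smooth factor)$\,\cdot\,|w_n(t,x)-u(t,y)|\,\cdot\,|\partial^\alpha\phi_\theta|$, and hence by an $L_1(\Omega\times[0,T]^2\times D^2)$ function uniformly in $n$, using the $L_{m+1}$-bounds on $u$ and $\{w_n\}$ together with the compact support of $\phi_\theta$. Dominated convergence (again using the a.e.\ convergence $w_n\to w$) then produces $\mathcal{E}(u,w_n,\theta)\to\mathcal{E}(u,w,\theta)$, and combining this with the left-hand side limit in the displayed inequality yields the $(\star)$-property for $w$.

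The main obstacle I anticipate is justifying the left-hand side limit: a.e.\ convergence $w_n\to w$ plus only an $L_{m+1}$-bound is not by itself enough to pass to the limit in the nonlinear composition $H_\theta(s,x,w_n(s,x))$, and one must invoke \cref{lem:partial H} with a sufficiently large smoothness index $\bar k$ and Sobolev embedding to dominate $\partial_z H_\theta$ in $L_\infty$ and thereby furnish the uniform integrability needed for Vitali's theorem. The passage to the limit on the right-hand side and the final deduction of the $(\star)$-property are then essentially routine dominated-convergence arguments.
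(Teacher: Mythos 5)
Your proposal is correct and takes essentially the same route as the proof the paper delegates to \cite{dareiotis2019entropy,dareiotis2020nonlinear}: bound $\partial_z H_\theta$ in $L_\infty$ via \cref{lem:partial H} and Sobolev embedding, then pass to the limit in both sides of the $(\star)$-inequality using a.e.\ convergence of $w_n$ together with uniform integrability supplied by the $L_{m+1}$-bounds. One small imprecision: on the right-hand side, the integrands in $\mathcal{E}(u,w_n,\theta)$ are controlled by $C(1+|w_n|+|u|)\,|\partial^\alpha\phi_\theta|$, which is \emph{uniformly integrable} (not dominated by a single $L_1$ majorant, since $w_n$ varies), so you should invoke Vitali rather than dominated convergence there as well---exactly as you already do on the left-hand side.
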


\subsection{$L_{1}$-estimates\label{sec:L1+-estimate}}

Now, we give the $L_{1}$-estimates of two entropy solutions $u$ and
$\tilde{u}$ using Kruzhkov's doubling variables technique (cf.
\cite{kruvzkov1970first}), which can eliminate the coupling effect
of $u-\tilde{u}$. One key point is to select the appropriate function
$\eta$ such that $\eta(\cdot-z)\in\mathcal{E}_{0}$ for any $z\geq0$
to approximate the positive part function. Using the convolution,
the non-negative $z$ can be replaced by another entropy solution
under suitable conditions.

In this subsection, we fix $i\in\{0,1,\ldots,N\}$. 
For the sake of brevity, we define $B\coloneqq B_{i}$,
$\psi\coloneqq\psi_{i}$ and $\varrho_{\varepsilon}(x-y)\coloneqq\varrho_{\varepsilon,i}(x-y)$ which are introduced in
the definition of the spatial mollifier in \Cref{sec:-property-and--esitmate}. 
For a non-negative $\varphi\in C_{c}^{\infty}((0,T))$ satisfying
$\Vert\varphi\Vert_{L_{\infty}(0,T)}\lor\Vert\partial_{t}\varphi\Vert_{L_{1}(0,T)}\leq1$,
we define the non-negative test functions
\[
\phi_{\varepsilon}(t,x,y)\coloneqq\varrho_{\varepsilon}(x-y)\varphi(t)\psi(x)\mathbf{1}_{\overline{D}}(x),\quad\phi_{\theta,\varepsilon}(t,x,s,y)\coloneqq\rho_{\theta}(s-t)\phi_{\varepsilon}(\frac{s+t}{2},x,y).
\]
\begin{rem}
Compare to the test functions $\phi_{\theta}$ in \Cref{sec:-property-and--esitmate}.
We want to take $g(x,y)=\mathbf{1}_{\overline{D}}(x)\psi(x)\varrho_{\varepsilon}(x-y)$.
In this case, with $\varepsilon\in(0,\bar{\varepsilon})$ (see 
\cref{rem:support of rho y} for the definition of $\bar{\varepsilon}$),
we have $g\in\Gamma_{B}^{-}$ and the function $\tilde{g}(x,y)\coloneqq g(y,x)$ is in $\Gamma_{B}^+$. 

To derive the $L_{1}$-estimates, we need the following lemma.
\end{rem}

\begin{lem}
\label{lem:u-=00005Ctildeu}Let $0\leq\xi,\tilde{\xi}\in L_{m+1}(\Omega,\mathcal{F}_{0};L_{m+1}(D))$.
Suppose that $u$ and $\tilde{u}$ are the entropy solutions to the
Dirichlet problems $\Pi(\Phi,\xi)$ and $\Pi(\tilde{\Phi},\tilde{\xi})$,
respectively. Let Assumptions \ref{assu:=00005CPhi}, \ref{assu:Coefficients} and \ref{assu:zero condition} hold for both $\Phi$ and $\tilde{\Phi}$.
If $\tilde{u}$ or $u$ has the $(\star)$-property, for $\delta\in(0,1)$, $\varepsilon\in(0,\bar{\varepsilon})$, $\lambda\in[0,1]$,
$\alpha\in(0,1\land(m/2))$, and every non-negative $\varphi\in C_{c}^{\infty}((0,T))$
such that 
\[
\Vert\varphi\Vert_{L_{\infty}(0,T)}\lor\Vert\partial_{t}\varphi\Vert_{L_{1}(0,T)}\leq1,
\]
we have 
\begin{align}
&-\mathbb{E}\int_{t,x,y}(\tilde{u}(t,x)-u(t,y))^{+}\varrho_{\varepsilon}(x-y)\partial_{t}\varphi(t)\psi(x)\label{eq:intermitineq-1}\\
 & \leq\mathbb{E}\int_{t,x}\varphi(t)\Delta_{x}\psi(x)(\tilde{\Phi}(\tilde{u}(t,x))-\Phi(u(t,x)))^{+}\nonumber\\
 &\quad +C\varepsilon^{\frac{1}{m+1}}\mathbb{E}\Vert\nabla\llbracket\mathfrak{a}\rrbracket(u)\Vert_{L_{2}(D_{T})}^{2}\nonumber\\
 & \quad+C\mathfrak{C}(\varepsilon,\delta,\lambda,\alpha)\mathbb{E}(1+\Vert u\Vert_{L_{m+1}(D_{T})}^{m+1}+\Vert\tilde{u}\Vert_{L_{m+1}(D_{T})}^{m+1})\nonumber\\
 & \quad+C\varepsilon^{-2}\mathbb{E}\Big(\big\Vert\mathbf{1}_{|u|\geq R_{\lambda}}(1+u)\big\Vert_{L_{m}(D_{T})}^{m}+\big\Vert\mathbf{1}_{|\tilde{u}|\geq R_{\lambda}}(1+\tilde{u})\big\Vert_{L_{m}(D_{T})}^{m}\Big)\nonumber\\
 & \quad+C\mathbb{E}\int_{t,x,y}\mathbf{1}_{B\cap\overline{D}}(x)\varphi(t)\Big(\varepsilon^{2}\sum_{i,j}|\partial_{x_{i}y_{j}}\varrho_{\varepsilon}(x-y)|\nonumber\\
 & \quad+\varepsilon\sum_{i}|\partial_{x_{i}}\varrho_{\varepsilon}(x-y)|+\varrho_{\varepsilon}(x-y)\Big)(\tilde{u}(t,x)-u(t,y))^{+}.\nonumber
\end{align}
where 
\begin{align*}
R_{\lambda} & \coloneqq\sup\big\{ R\in[0,\infty]:|\mathfrak{a}(r)-\tilde{\mathfrak{a}}(r)|\leq\lambda,\ \forall|r|<R\big\},	\\
\mathfrak{C}(\varepsilon,\delta,\lambda,\alpha) & \coloneqq\varepsilon^{-2}\delta^{2\beta}+\delta^{\beta}\varepsilon^{-1}+\varepsilon^{\bar{\kappa}}+\varepsilon^{\tilde{\beta}}+\varepsilon^{\frac{1}{m+1}}+\varepsilon^{2\bar{\kappa}}\delta^{-1}\\
 & \quad+\varepsilon^{-2}\delta^{2\alpha}+\varepsilon^{-2}\lambda^{2}+\varepsilon^{-1}\lambda,
\end{align*}
and the constant $C$ depends only on $N_{0}$, $K$, $d$, $T$, $|D|$ and $\alpha$.
\end{lem}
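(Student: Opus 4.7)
The plan is to apply Kruzhkov's doubling variables technique to the entropy inequality \cref{eq:entropy formulation}: evaluate $u$ at $(t,y)$ and $\tilde u$ at $(s,x)$, test against $\phi_{\theta,\varepsilon}(t,x,s,y)=\rho_\theta(s-t)\varphi(\tfrac{s+t}{2})\psi(x)\varrho_\varepsilon(x-y)\mathbf{1}_{\overline D}(x)$, and add the two resulting inequalities. The asymmetric construction of $\varrho_\varepsilon$ (\Cref{rem:support of rho y}) is crucial: for every $x\in\overline D$, $y\mapsto\phi_{\theta,\varepsilon}$ lies in $C_c^\infty(D)$, so any $\eta\in\mathcal{E}$ is admissible when $u$'s entropy inequality is tested in $y$; on the other hand $x\mapsto\phi_{\theta,\varepsilon}$ only belongs to $C^\infty(\overline D)$, so \cref{def:entropy-solution}(iii) forces the entropy to belong to $\mathcal{E}_0$ when testing $\tilde u$'s inequality in $x$. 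To approximate $r\mapsto r^+$, I take a smooth convex $\eta_\delta\in C^2(\mathbb{R})$ with $\eta_\delta'(r)=0$ for $r\le 0$ and $\eta_\delta''$ compactly supported, so that $\eta_\delta(\cdot-\tilde u(s,x))\in\mathcal{E}$ works for $u$'s equation, and, using \cref{prop:non-negative} to ensure $u(t,y)\ge 0$ a.s., $\eta_\delta(\tilde u(s,x)-u(t,y))$ viewed as a function of $\tilde u(s,x)$ has derivative $\eta_\delta'(-u(t,y))=0$ at $0$, hence lies in $\mathcal{E}_0$ as required.

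Summing the two entropy inequalities, the time-derivative contributions combine to the left-hand side $-(\tilde u-u)^+\partial_t[\varphi\psi\varrho_\varepsilon]$ after $\theta\to 0^+$ (where the approximation error picks up the $\theta^{1-\mu}$ from the $(\star)$-property). The second-order terms $\llbracket\mathfrak{a}^2\eta_\delta'\rrbracket(u)\Delta_y\phi$ and $\llbracket\tilde{\mathfrak{a}}^2\eta_\delta'\rrbracket(\tilde u)\Delta_x\phi$ are regrouped by writing $\Delta_x\phi=\Delta_y\phi+(\text{cross and }\psi\text{-derivative terms})$; the $\Delta_y$ piece combined with $\llbracket\mathfrak{a}^2\eta_\delta'\rrbracket(u)\Delta_y\phi$ yields, after integrating by parts in $y$ (no boundary contribution thanks to $\varrho_\varepsilon(x-\cdot)\in C_c^\infty(D)$) and using property (ii) of \cref{def:entropy-solution} together with the Fisher-information terms $|\nabla\llbracket\mathfrak{a}\rrbracket(u)|^2$, $|\nabla\llbracket\tilde{\mathfrak{a}}\rrbracket(\tilde u)|^2$, a non-positive perfect square plus a residual controlled by $|\mathfrak{a}-\tilde{\mathfrak{a}}|\le\lambda$ on $\{|u|\lor|\tilde u|<R_\lambda\}$ and by the tail terms $\mathbf{1}_{|u|\ge R_\lambda}(1+u)^m$ otherwise. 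The remaining $\Delta_x\psi$ contribution survives and, as $\delta,\theta\to 0$, produces exactly the boundary-free term $\int\varphi\Delta_x\psi(\tilde\Phi(\tilde u)-\Phi(u))^+$ that appears on the right-hand side, while the cross terms in $\partial_{x_i y_j}\varrho_\varepsilon$, $\partial_{x_i}\varrho_\varepsilon$ and $\varrho_\varepsilon$ contribute the final summand in the claimed inequality.

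The deterministic lower-order coefficients $(a^{ij}_{x_j},b^i,f^i,F)$ are treated by standard doubling arguments in the spirit of \cite{dareiotis2020nonlinear}: their mismatch between $(x,u(t,y))$ and $(y,u(t,y))$ (and analogously for $\tilde u$) is controlled by Hölder continuity \cref{eq:sigma r,eq:f r,eq:f x,eq:F}, producing the mollifier-scale quantities $\varepsilon^{\bar\kappa},\varepsilon^{\tilde\beta},\delta^\beta$ assembled in $\mathfrak{C}(\varepsilon,\delta,\lambda,\alpha)$; the remaining $\varepsilon^{-2}\delta^{2\beta}$ and $\varepsilon^{-2}\delta^{2\alpha}$ terms come from quadratic-in-$\eta_\delta''$ contributions where $\eta_\delta''\sim\delta^{-1}$, while $\varepsilon^{1/(m+1)}\|\nabla\llbracket\mathfrak{a}\rrbracket(u)\|^2$ traces back to the trace-type bound near $\{x\approx y\}$ after passing $\delta\to 0$.

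The main obstacle is the stochastic part. Summing the $\mathrm{d}W^k$ integrals from the two entropy inequalities and taking expectation produces a cross quadratic-variation term plus drift corrections coming from the Stratonovich-to-It\^o conversion. The idea is to recognize this expectation, modulo terms vanishing as $\theta\to 0^+$, as exactly the quantity $\mathbb{E}\int_{s,x}H_\theta(s,x,\tilde u(s,x))-\mathcal{E}(u,\tilde u,\theta)$ with $h=\eta_\delta'$ and $g(x,y)=\psi(x)\varrho_\varepsilon(x-y)\mathbf{1}_{\overline D}(x)$, which belongs to $\Gamma_B^-$, so that the $(\star)$-property of $\tilde u$ bounds it by $C\theta^{1-\mu}$. (If instead $u$ has the $(\star)$-property, one applies the symmetric case $\Gamma_B^+$ together with the variable-swap observed after \cref{rem:adjust star-property}, which is why \cref{def:star property} is stated in both asymmetric forms.) Verifying that the stochastic integrand obtained by Itô expansion of the difference precisely matches the nine-term expression defining $\mathcal{E}(u,\tilde u,\theta)$ is the technical heart of the argument and relies on \cref{eq:interchange} to justify exchanging orders of differentiation in $\sigma_r\sigma_{rx_j}$.
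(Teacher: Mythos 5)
Your proposal follows the same route as the paper: doubling variables with the asymmetric mollifier $\psi(x)\varrho_\varepsilon(x-y)$, exploiting the split $\mathcal{E}_0$ versus $\mathcal{E}$ in \cref{def:entropy-solution} (iii) with the nonnegativity of $u$ from \cref{prop:non-negative} to make $\eta_\delta(\cdot-u(t,y))\in\mathcal{E}_0$, collecting the $\Delta_x\psi$ contribution as the boundary-free $(\tilde\Phi(\tilde u)-\Phi(u))^+$ term, and absorbing the stochastic integral into the $(\star)$-property. Two sign slips worth noting: when testing $u$'s entropy inequality the paper uses $\eta(r)=\eta_\delta(\tilde u(s,x)-r)$ (not $\eta_\delta(r-\tilde u(s,x))$, which would approximate $(u-\tilde u)^+$); and in the $(\star)$-property application for case (i), the correct choice is $h(r)=-\eta_\delta'(-r)\in\mathcal{C}^-$, not $h=\eta_\delta'\in\mathcal{C}^+$ — the latter would be incompatible with $g\in\Gamma_B^-$, since \cref{def:star property} pairs $\Gamma_B^-$ with $\mathcal{C}^-$ and $\Gamma_B^+$ with $\mathcal{C}^+$. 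The extra minus sign comes from the overall minus in front of the stochastic integral in (\cref{eq:entropy for u}), since $\eta'(u)=-\eta_\delta'(z-u)$. With those corrections the argument matches the paper's.
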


\begin{rem}
\label{rem:star property in Lemma of L1}Estimate \cref{eq:intermitineq-1} is not symmetric for $u$ and $\tilde{u}$
(See \cref{rem:different star property} for the reason we focus
on such terms) due to the $(\star)$-property of $\tilde{u}$ or $u$ and
the different status of $x$ and $y$ in test function $\phi_{\varepsilon}(t,x,y)$.
Therefore, it is necessary to consider the $(\star)$-property separately
as in \cref{def:star property}. This is a key point of
our proof, which is different from \cite{dareiotis2019entropy,dareiotis2020ergodicity,dareiotis2020nonlinear}.
\end{rem}

\begin{proof}
We first prove the case that $\tilde{u}$ has the $(\star)$-property. With \cref{prop:non-negative},
we have $u,\tilde{u}\geq0$ for almost all $(\omega,t,x)\in\Omega_{T}\times D$.
For each $\delta>0$, define the function $\eta_{\delta}\in C^{2}(\mathbb{R})$
by
\[
\eta_{\delta}(0)=\eta_{\delta}^{\prime}(0)=0,\quad\eta_{\delta}^{\prime\prime}(r)=\rho_{\delta}(r).
\]
Thus, we have
\[
|\eta_{\delta}(r)-r^{+}|\leq\delta,\quad\mathrm{supp}\,\eta_{\delta}^{\prime\prime}\subset[0,\delta],\quad\int_{\mathbb{R}}|\eta_{\delta}^{\prime\prime}(r)|\mathrm{d}r\leq2,\quad|\eta_{\delta}^{\prime\prime}|\leq2\delta^{-1}.
\]

Fix $(z,t,y)\in[0,\infty)\times D_{T}$. Since $\tilde{u}$ is the
entropy solution to $\Pi(\tilde{\Phi},\tilde{\xi})$ and 
\[
\big(\eta_{\delta}(\cdot-z),\rho_{\theta}(\cdot-t)\varphi(\frac{\cdot+t}{2}),\varrho_{\varepsilon}(\cdot-y)\psi(\cdot)\mathbf{1}_{\overline{D}}(\cdot)\big)\in\mathcal{E}_{0}\times C_{c}^{\infty}((0,T))\times C^{\infty}(\overline{D})
\]
for $\varepsilon\in(0,\bar{\varepsilon})$ and a sufficiently small $\theta$,
using the entropy inequality \cref{eq:entropy formulation} of $\tilde{u}$
with $\big(\eta_{\delta}(r-z),\phi_{\theta,\varepsilon}(t,\cdot,\cdot,y)\big)$
instead of $\big(\eta(r),\phi\big)$, we have
\begin{align*}
 & -\int_{s,x}\eta_{\delta}(\tilde{u}-z)\partial_{s}\phi_{\theta,\varepsilon}\\
 & \leq\int_{s,x}\llbracket\tilde{\mathfrak{a}}^{2}\eta_{\delta}^{\prime}(\cdot-z)\rrbracket(\tilde{u})\Delta_{x}\phi_{\theta,\varepsilon}+\int_{s,x}\llbracket a^{ij}\eta_{\delta}^{\prime}(\cdot-z)\rrbracket(x,\tilde{u})\partial_{x_{i}x_{j}}\phi_{\theta,\varepsilon}\\
 & \quad+\int_{s,x}\llbracket a_{x_{j}}^{ij}\eta_{\delta}^{\prime}(\cdot-z)-f_{r}^{i}\eta_{\delta}^{\prime}(\cdot-z)\rrbracket(x,\tilde{u})\partial_{x_{i}}\phi_{\theta,\varepsilon}\\
 & \quad-\int_{s,x}\eta_{\delta}^{\prime}(\tilde{u}-z)b^{i}(x,\tilde{u})\partial_{x_{i}}\phi_{\theta,\varepsilon}+\int_{s,x}\eta_{\delta}^{\prime}(\tilde{u}-z)F(x,\tilde{u})\phi_{\theta,\varepsilon}\\
 & \quad+\int_{s,x}\eta_{\delta}^{\prime}(\tilde{u}-z)f_{x_{i}}^{i}(x,\tilde{u})\phi_{\theta,\varepsilon}-\int_{s,x}\llbracket f_{rx_{i}}^{i}\eta_{\delta}^{\prime}(\cdot-z)\rrbracket(x,\tilde{u})\phi_{\theta,\varepsilon}\\
 & \quad+\int_{s,x}\frac{1}{2}\eta_{\delta}^{\prime\prime}(\tilde{u}-z)\sum_{k=1}^{\infty}|\sigma_{x_{i}}^{ik}(x,\tilde{u})|^{2}\phi_{\theta,\varepsilon}-\int_{s,x}\eta_{\delta}^{\prime\prime}(\tilde{u}-z)|\nabla_{x}\llbracket\tilde{\mathfrak{a}}\rrbracket(\tilde{u})|^{2}\phi_{\theta,\varepsilon}\\
 & \quad+\int_{0}^{T}\int_{x}\Big(\eta_{\delta}^{\prime}(\tilde{u}-z)\phi_{\theta,\varepsilon}\sigma_{x_{i}}^{ik}(x,\tilde{u})-\llbracket\sigma_{rx_{i}}^{ik}\eta_{\delta}^{\prime}(\cdot-z)\rrbracket(x,\tilde{u})\phi_{\theta,\varepsilon}\\
 & \quad-\llbracket\sigma_{r}^{ik}\eta_{\delta}^{\prime}(\cdot-z)\rrbracket(x,\tilde{u})\partial_{x_{i}}\phi_{\theta,\varepsilon}\Big)\mathrm{d}W^{k}(s),
\end{align*}
where $\tilde{u}=\tilde{u}(s,x)$. Notice that all the expressions
are continuous in $(z,t,y)$. We take $z=u(t,y)$ by convolution and
integrate over $(t,y)\in D_{T}$. By taking expectations, we have
\begin{align}
 & -\mathbb{E}\int_{t,x,s,y}\eta_{\delta}(\tilde{u}-u)\partial_{s}\phi_{\theta,\varepsilon}\label{eq:entropy for deltau}\\
 & \leq\mathbb{E}\int_{t,x,s,y}\llbracket\tilde{\mathfrak{a}}^{2}\eta_{\delta}^{\prime}(\cdot-u)\rrbracket(\tilde{u})\Delta_{x}\phi_{\theta,\varepsilon}+\mathbb{E}\int_{t,x,s,y}\llbracket a^{ij}\eta_{\delta}^{\prime}(\cdot-u)\rrbracket(x,\tilde{u})\partial_{x_{i}x_{j}}\phi_{\theta,\varepsilon}\nonumber \\
 & \quad+\mathbb{E}\int_{t,x,s,y}\llbracket a_{x_{j}}^{ij}\eta_{\delta}^{\prime}(\cdot-u)-f_{r}^{i}\eta_{\delta}^{\prime}(\cdot-u)\rrbracket(x,\tilde{u})\partial_{x_{i}}\phi_{\theta,\varepsilon}\nonumber \\
 & \quad-\mathbb{E}\int_{t,x,s,y}\eta_{\delta}^{\prime}(\tilde{u}-u)b^{i}(x,\tilde{u})\partial_{x_{i}}\phi_{\theta,\varepsilon}+\mathbb{E}\int_{t,x,s,y}\eta_{\delta}^{\prime}(\tilde{u}-u)F(x,\tilde{u})\phi_{\theta,\varepsilon}\nonumber \\
 & \quad+\mathbb{E}\int_{t,x,s,y}\eta_{\delta}^{\prime}(\tilde{u}-u)f_{x_{i}}^{i}(x,\tilde{u})\phi_{\theta,\varepsilon}-\mathbb{E}\int_{t,x,s,y}\llbracket f_{rx_{i}}^{i}\eta_{\delta}^{\prime}(\cdot-u)\rrbracket(x,\tilde{u})\phi_{\theta,\varepsilon}\nonumber \\
 & \quad+\mathbb{E}\int_{t,x,s,y}\frac{1}{2}\eta_{\delta}^{\prime\prime}(\tilde{u}-u)\sum_{k=1}^{\infty}|\sigma_{x_{i}}^{ik}(x,\tilde{u})|^{2}\phi_{\theta,\varepsilon}-\mathbb{E}\int_{t,x,s,y}\eta_{\delta}^{\prime\prime}(\tilde{u}-u)|\nabla_{x}\llbracket\tilde{\mathfrak{a}}\rrbracket(\tilde{u})|^{2}\phi_{\theta,\varepsilon}\nonumber \\
 & \quad+\int_{t,y}\mathbb{E}\Bigg[\int_{0}^{T}\int_{x}\Big(\eta_{\delta}^{\prime}(\tilde{u}-z)\phi_{\theta,\varepsilon}\sigma_{x_{i}}^{ik}(x,\tilde{u})-\llbracket\sigma_{rx_{i}}^{ik}\eta_{\delta}^{\prime}(\cdot-z)\rrbracket(x,\tilde{u})\phi_{\theta,\varepsilon}\nonumber \\
 & \quad-\llbracket\sigma_{r}^{ik}\eta_{\delta}^{\prime}(\cdot-z)\rrbracket(x,\tilde{u})\partial_{x_{i}}\phi_{\theta,\varepsilon}\Big)\mathrm{d}W^{k}(s)\Bigg]_{z=u},\nonumber 
\end{align}
where $u=u(t,y)$ and $\tilde{u}=\tilde{u}(s,x)$. 

Similarly, for each $(z,s,x)\in[0,\infty)\times D_{T}$, since
\[
\big(\eta_{\delta}(z-\cdot),\rho_{\theta}(s-\cdot)\varphi(\frac{s+\cdot}{2}),\varrho_{\varepsilon}(x-\cdot)\psi(x)\mathbf{1}_{\overline{D}}(x)\big)\in\mathcal{E}\times C_{c}^{\infty}((0,T))\times C_{c}^{\infty}(D)
\]
for $\varepsilon\in(0,\bar{\varepsilon})$ and a sufficiently small $\theta$,
we apply the entropy inequality of $u$ with $\eta(r)\coloneqq\eta_{\delta}(z-r)$
and $\phi(t,y)\coloneqq\phi_{\theta,\varepsilon}(t,x,s,y)$. After
substituting $z=\tilde{u}(s,x)$ by convolution, integrating over
$(s,x)\in D_{T}$ and taking expectations, we have
\begin{align}
 & -\mathbb{E}\int_{t,x,s,y}\eta_{\delta}(\tilde{u}-u)\partial_{t}\phi_{\theta,\varepsilon}\label{eq:entropy for u}\\
 & \leq-\mathbb{E}\int_{t,x,s,y}\llbracket\mathfrak{a}^{2}\eta_{\delta}^{\prime}(\tilde{u}-\cdot)\rrbracket(u)\Delta_{y}\phi_{\theta,\varepsilon}-\mathbb{E}\int_{t,x,s,y}\llbracket a^{ij}\eta_{\delta}^{\prime}(\tilde{u}-\cdot)\rrbracket(y,u)\partial_{y_{i}y_{j}}\phi_{\theta,\varepsilon}\nonumber \\
 & \quad-\mathbb{E}\int_{t,x,s,y}\llbracket a_{y_{j}}^{ij}\eta_{\delta}^{\prime}(\tilde{u}-\cdot)-f_{r}^{i}\eta_{\delta}^{\prime}(\tilde{u}-\cdot)\rrbracket(y,u)\partial_{y_{i}}\phi_{\theta,\varepsilon}\nonumber \\
 & \quad+\mathbb{E}\int_{t,x,s,y}\eta_{\delta}^{\prime}(\tilde{u}-u)b^{i}(y,u)\partial_{y_{i}}\phi_{\theta,\varepsilon}-\mathbb{E}\int_{t,x,s,y}\eta_{\delta}^{\prime}(\tilde{u}-u)F(y,u)\phi_{\theta,\varepsilon}\nonumber \\
 & \quad-\mathbb{E}\int_{t,x,s,y}\eta_{\delta}^{\prime}(\tilde{u}-u)f_{y_{i}}^{i}(y,u)\phi_{\theta,\varepsilon}+\mathbb{E}\int_{t,x,s,y}\llbracket f_{ry_{i}}^{i}\eta_{\delta}^{\prime}(\tilde{u}-\cdot)\rrbracket(y,u)\phi_{\theta,\varepsilon}\nonumber \\
 & \quad+\mathbb{E}\int_{t,x,s,y}\frac{1}{2}\eta_{\delta}^{\prime\prime}(\tilde{u}-u)\sum_{k=1}^{\infty}|\sigma_{y_{i}}^{ik}(y,u)|^{2}\phi_{\theta,\varepsilon}-\mathbb{E}\int_{t,x,s,y}\eta_{\delta}^{\prime\prime}(\tilde{u}-u)|\nabla_{y}\llbracket\mathfrak{a}\rrbracket(u)|^{2}\phi_{\theta,\varepsilon}\nonumber \\
 & \quad-\int_{s,x}\mathbb{E}\Bigg[\int_{0}^{T}\int_{y}\Big(\eta_{\delta}^{\prime}(z-u)\phi_{\theta,\varepsilon}\sigma_{y_{i}}^{ik}(y,u)-\llbracket\sigma_{ry_{i}}^{ik}\eta_{\delta}^{\prime}(z-\cdot)\rrbracket(y,u)\phi_{\theta,\varepsilon}\nonumber \\
 & \quad-\llbracket\sigma_{r}^{ik}\eta_{\delta}^{\prime}(z-\cdot)\rrbracket(y,u)\partial_{y_{i}}\phi_{\theta,\varepsilon}\Big)\mathrm{d}W^{k}(t)\Bigg]_{z=\tilde{u}},\nonumber 
\end{align}
where $u=u(t,y)$ and $\tilde{u}=\tilde{u}(s,x)$. Note that the stochastic
integral term in \cref{eq:entropy for deltau} is zero due to the
support of $\rho_{\theta}$. Meanwhile, we use the $(\star)$-property
(i) in \cref{def:star property} of $\tilde{u}$ for the last term of \cref{eq:entropy for u}
with $h(r)=-\eta_{\delta}^{\prime}(-r)$ (then $h\in\mathcal{C}^{-}$)
and $g(x,y)=\mathbf{1}_{\overline{D}}(x)\psi(x)\varrho_{\varepsilon}(x-y)$.
Adding inequalities \cref{eq:entropy for deltau}-\cref{eq:entropy for u}
and taking the limit $\theta\rightarrow0^{+}$, we have
\begin{equation}
-\mathbb{E}\int_{t,x,y}\eta_{\delta}(\tilde{u}-u)\partial_{t}\phi_{\varepsilon}\leq I+\mathcal{A}+\mathcal{B}+\mathcal{E},\label{eq:L1_mediem}
\end{equation}
where
\begin{align*}
I&\coloneqq\mathbb{E}\int_{t,x,y}\llbracket\tilde{\mathfrak{a}}^{2}\eta_{\delta}^{\prime}(\cdot-u)\rrbracket(\tilde{u})\Delta_{x}\phi_{\varepsilon}-\mathbb{E}\int_{t,x,y}\llbracket\mathfrak{a}^{2}\eta_{\delta}^{\prime}(\tilde{u}-\cdot)\rrbracket(u)\Delta_{y}\phi_{\varepsilon}\\
&\quad-\mathbb{E}\int_{t,x,y}\eta_{\delta}^{\prime\prime}(\tilde{u}-u)\big(|\nabla_{x}\llbracket\tilde{\mathfrak{a}}\rrbracket(\tilde{u})|^{2}+|\nabla_{y}\llbracket\mathfrak{a}\rrbracket(u)|^{2}\big)\phi_{\varepsilon},\\
\mathcal{A} & \coloneqq\mathbb{E}\int_{t,x,y}\llbracket a^{ij}\eta_{\delta}^{\prime}(\cdot-u)\rrbracket(x,\tilde{u})\partial_{x_{i}x_{j}}\phi_{\varepsilon}-\mathbb{E}\int_{t,x,y}\llbracket a^{ij}\eta_{\delta}^{\prime}(\tilde{u}-\cdot)\rrbracket(y,u)\partial_{y_{i}y_{j}}\phi_{\varepsilon}\\
 & \quad+\mathbb{E}\int_{t,x,y}\llbracket a_{x_{j}}^{ij}\eta_{\delta}^{\prime}(\cdot-u)\rrbracket(x,\tilde{u})\partial_{x_{i}}\phi_{\varepsilon}-\mathbb{E}\int_{t,x,y}\llbracket a_{y_{j}}^{ij}\eta_{\delta}^{\prime}(\tilde{u}-\cdot)\rrbracket(y,u)\partial_{y_{i}}\phi_{\varepsilon}\\
 & \quad-\mathbb{E}\int_{t,x,y}\eta_{\delta}^{\prime}(\tilde{u}-u)b^{i}(x,\tilde{u})\partial_{x_{i}}\phi_{\varepsilon}+\mathbb{E}\int_{t,x,y}\eta_{\delta}^{\prime}(\tilde{u}-u)b^{i}(y,u)\partial_{y_{i}}\phi_{\varepsilon}
\\
&\quad+\mathbb{E}\int_{t,x,y}\frac{1}{2}\eta_{\delta}^{\prime\prime}(\tilde{u}-u)\Big(\sum_{k=1}^{\infty}|\sigma_{x_{i}}^{ik}(x,\tilde{u})|^{2}+\sum_{k=1}^{\infty}|\sigma_{y_{i}}^{ik}(y,u)|^{2}\Big)\phi_{\varepsilon},
\\
\mathcal{B} & \coloneqq-\mathbb{E}\int_{t,x,y}\llbracket f_{r}^{i}\eta_{\delta}^{\prime}(\cdot-u)\rrbracket(x,\tilde{u})\partial_{x_{i}}\phi_{\varepsilon}+\mathbb{E}\int_{t,x,y}\llbracket f_{r}^{i}\eta_{\delta}^{\prime}(\tilde{u}-\cdot)\rrbracket(y,u)\partial_{y_{i}}\phi_{\varepsilon}\\
 & \quad+\mathbb{E}\int_{t,x,y}\eta_{\delta}^{\prime}(\tilde{u}-u)\big(f_{x_{i}}^{i}(x,\tilde{u})-f_{y_{i}}^{i}(y,u)\big)\phi_{\varepsilon}\\
 & \quad-\mathbb{E}\int_{t,x,y}\big(\llbracket f_{rx_{i}}^{i}\eta_{\delta}^{\prime}(\cdot-u)\rrbracket(x,\tilde{u})-\llbracket f_{ry_{i}}^{i}\eta_{\delta}^{\prime}(\tilde{u}-\cdot)\rrbracket(y,u)\big)\phi_{\varepsilon}\\
 &\quad+ \mathbb{E}\int_{t,x,y}\eta_{\delta}^{\prime}(\tilde{u}-u)\big(F(x,\tilde{u})-F(y,u)\big)\phi_{\varepsilon},
\end{align*}
and
\begin{align*}
\mathcal{E}\coloneqq\sum_{i=1}^{9}\mathcal{E}_{i}&\coloneqq-\mathbb{E}\int_{t,x,y}\partial_{x_{j}y_{i}}\phi_{\varepsilon}\int_{u}^{\tilde{u}}\int_{\tilde{r}}^{u}\eta_{\delta}^{\prime\prime}(\tilde{r}-r)\sigma_{r}^{ik}(y,r)\sigma_{r}^{jk}(x,\tilde{r})\mathrm{d}r\mathrm{d}\tilde{r}
\\
&\quad-\mathbb{E}\int_{t,x,y}\partial_{y_{i}}\phi_{\varepsilon}\int_{u}^{\tilde{u}}\int_{\tilde{r}}^{u}\eta_{\delta}^{\prime\prime}(\tilde{r}-r)\sigma_{r}^{ik}(y,r)\sigma_{rx_{j}}^{jk}(x,\tilde{r})\mathrm{d}r\mathrm{d}\tilde{r}
\\
&\quad+\mathbb{E}\int_{t,x,y}\partial_{y_{i}}\phi_{\varepsilon}\int_{\tilde{u}}^{u}\eta_{\delta}^{\prime\prime}(\tilde{u}-r)\sigma_{r}^{ik}(y,r)\sigma_{x_{j}}^{jk}(x,\tilde{u})\mathrm{d}r
\\
&\quad-\mathbb{E}\int_{t,x,y}\partial_{x_{j}}\phi_{\varepsilon}\int_{u}^{\tilde{u}}\int_{\tilde{r}}^{u}\eta_{\delta}^{\prime\prime}(\tilde{r}-r)\sigma_{ry_{i}}^{ik}(y,r)\sigma_{r}^{jk}(x,\tilde{r})\mathrm{d}r\mathrm{d}\tilde{r}
\\
&\quad-\mathbb{E}\int_{t,x,y}\phi_{\varepsilon}\int_{u}^{\tilde{u}}\int_{\tilde{r}}^{u}\eta_{\delta}^{\prime\prime}(\tilde{r}-r)\sigma_{ry_{i}}^{ik}(y,r)\sigma_{rx_{j}}^{jk}(x,\tilde{r})\mathrm{d}r\mathrm{d}\tilde{r}
\\
&\quad+\mathbb{E}\int_{t,x,y}\phi_{\varepsilon}\int_{\tilde{u}}^{u}\eta_{\delta}^{\prime\prime}(\tilde{u}-r)\sigma_{ry_{i}}^{ik}(y,r)\sigma_{x_{j}}^{jk}(x,\tilde{u})\mathrm{d}r
\\
&\quad+\mathbb{E}\int_{t,x,y}\partial_{x_{j}}\phi_{\varepsilon}\int_{u}^{\tilde{u}}\eta_{\delta}^{\prime\prime}(\tilde{r}-u)\sigma_{y_{i}}^{ik}(y,u)\sigma_{r}^{jk}(x,\tilde{r})\mathrm{d}\tilde{r}
\\
&\quad+\mathbb{E}\int_{t,x,y}\phi_{\varepsilon}\int_{u}^{\tilde{u}}\eta_{\delta}^{\prime\prime}(\tilde{r}-u)\sigma_{y_{i}}^{ik}(y,u)\sigma_{rx_{j}}^{jk}(x,\tilde{r})\mathrm{d}\tilde{r}
\\
&\quad-\mathbb{E}\int_{t,x,y}\phi_{\varepsilon}\eta_{\delta}^{\prime\prime}(\tilde{u}-u)\sigma_{y_{i}}^{ik}(y,u)\sigma_{x_{j}}^{jk}(x,\tilde{u}),
\end{align*}
and $u=u(t,y)$ and $\tilde{u}=\tilde{u}(t,x)$ in the integrand.
The term $I$ contains $\mathfrak{a}$, the term $\mathcal{A}$ involves $\sigma$, and the term $\mathcal{B}$ encompasses either $f$ or $F$. Term $\mathcal{E}$ is derived from the $(\star)$-property.
 Now, we estimate these terms. The following estimates are similar
to the proof of \cite[Theorem 4.1]{dareiotis2019entropy} and \cite[Theorem 4.1]{dareiotis2020nonlinear}.
The differences are caused by introducing the function $\psi$ and the
Dirichlet boundary condition. Therefore, we only focus on the application
of the divergence theorem and the term about $\psi$. 
First, we estimate the term $I$. From
\[
\partial_{y_{i}}\int_{0}^{\tilde{u}}\mathfrak{a}^{2}(r)\eta_{\delta}^{\prime}(\tilde{u}-r)\mathrm{d}r=0,
\]
the support of $\eta_{\delta}^{\prime}$ and $\varrho_{\varepsilon}(x-\cdot)\in C_{c}^{\infty}(D)$
for all $(x,\varepsilon)\in(B\cap\overline{D})\times(0,\bar{\varepsilon})$,
we have
\begin{align*}
 I& =-\mathbb{E}\int_{t,x,y}\mathbf{1}_{u\leq\tilde{u}}\partial_{x_{i}y_{i}}\phi_{\varepsilon}\int_{u}^{\tilde{u}}\int_{u}^{\tilde{u}}\mathbf{1}_{r\leq\tilde{r}}\mathfrak{\tilde{\mathfrak{a}}}^{2}(\tilde{r})\eta_{\delta}^{\prime\prime}(\tilde{r}-r)\mathrm{d}\tilde{r}\mathrm{d}r\\
 & \quad-\mathbb{E}\int_{t,x,y}\mathbf{1}_{u\leq\tilde{u}}\partial_{x_{i}y_{i}}\phi_{\varepsilon}\int_{u}^{\tilde{u}}\int_{u}^{\tilde{u}}\mathbf{1}_{r\leq\tilde{r}}\mathfrak{a}^{2}(r)\eta_{\delta}^{\prime\prime}(\tilde{r}-r)\mathrm{d}\tilde{r}\mathrm{d}r\\
 & \quad+\mathbb{E}\int_{t,x,y}\varphi\partial_{x_{i}}(\varrho_{\varepsilon}\partial_{x_{i}}\psi)\int_{u}^{\tilde{u}}\mathfrak{\tilde{\mathfrak{a}}}^{2}(\tilde{r})\eta_{\delta}^{\prime}(\tilde{r}-u)\mathrm{d}\tilde{r}\\
 & \quad+\mathbb{E}\int_{t,x,y}\text{\ensuremath{\varphi}}\partial_{y_{i}}(\varrho_{\varepsilon}\partial_{x_{i}}\psi)\int_{u}^{\tilde{u}}\mathfrak{a}^{2}(r)\eta_{\delta}^{\prime}(\tilde{u}-r)\mathrm{d}r\\
 &\quad-\mathbb{E}\int_{t,x,y}\eta_{\delta}^{\prime\prime}(\tilde{u}-u)\big(|\nabla_{x}\llbracket\tilde{\mathfrak{a}}\rrbracket(\tilde{u})|^{2}+|\nabla_{y}\llbracket\mathfrak{a}\rrbracket(u)|^{2}\big)\phi_{\varepsilon} \eqqcolon\sum_{i=1}^{5}I_{i}.
\end{align*}
Terms $I_{3}$ and $I_{4}$ are arisen from introducing $\psi$.
The term $I_{3}$ can be written as 
\begin{align*}
&I_{3}\\
 & =\mathbb{E}\int_{t,x,y}\varphi(t)\varrho_{\varepsilon}(x-y)\Delta_{x}\psi(x)\int_{u}^{\tilde{u}}\mathfrak{\tilde{\mathfrak{a}}}^{2}(\tilde{r})\mbox{sgn}^{+}(\tilde{r}-u)\mathrm{d}\tilde{r}\\
 & \quad+\mathbb{E}\int_{t,x,y}\varphi(t)\varrho_{\varepsilon}(x-y)\Delta_{x}\psi(x)\int_{u}^{\tilde{u}}\mathbf{1}_{0\leq\tilde{r}-u\leq\delta}\mathfrak{\tilde{\mathfrak{a}}}^{2}(\tilde{r})\big(\eta_{\delta}^{\prime}(\tilde{r}-u)-\mbox{sgn}^{+}(\tilde{r}-u)\big)\mathrm{d}\tilde{r}\\
 & \quad+\mathbb{E}\int_{t,x,y}\varphi(t)\partial_{x_{i}}\varrho_{\varepsilon}(x-y)\partial_{x_{i}}\psi(x)\int_{u}^{\tilde{u}}\mathfrak{\tilde{\mathfrak{a}}}^{2}(\tilde{r})\mbox{sgn}^{+}(\tilde{r}-u)\mathrm{d}\tilde{r}\\
 & \quad+\mathbb{E}\int_{t,x,y}\varphi(t)\partial_{x_{i}}\varrho_{\varepsilon}(x-y)\partial_{x_{i}}\psi(x)\int_{u}^{\tilde{u}}\mathbf{1}_{0\leq\tilde{r}-u\leq\delta}\mathfrak{\tilde{\mathfrak{a}}}^{2}(\tilde{r})\big(\eta_{\delta}^{\prime}(\tilde{r}-u)-\mbox{sgn}^{+}(\tilde{r}-u)\big)\mathrm{d}\tilde{r}\\
 & \eqqcolon\sum_{i=1}^{4}I_{3,i},
\end{align*}
where
\[
\mbox{sgn}^{+}(x)\coloneqq\begin{cases}
1, & x>0;\\
0, & \text{otherwise}.
\end{cases}
\]
Combining the boundness of $\Delta\psi$ and $\varphi$, the definition
of $\varrho_{\varepsilon}$ and \cref{assu:=00005CPhi},
we obtain that
\begin{align*}
|I_{3,2}|+|I_{3,4}| & \lesssim\delta\mathbb{E}\int_{t,x,y}\big(\varrho_{\varepsilon}(x-y)+|\partial_{x_{i}}\varrho_{\varepsilon}(x-y)|\big)\sup_{0\leq\tilde{r}-u\leq\delta}\mathfrak{\tilde{\mathfrak{a}}}^{2}(\tilde{r})\\
 & \lesssim\delta(1+\varepsilon^{-1})\mathbb{E}\big(1+\Vert u\Vert_{L_{m}(D_{T})}^{m}\big).
\end{align*}
Therefore, noticing $\varepsilon<1$, we have
\begin{align*}
I_{3} &\leq I_{3,1}+\mathbb{E}\int_{t,x,y}\varphi(t)\partial_{x_{i}}\varrho_{\varepsilon}(x-y)\partial_{x_{i}}\psi(x)(\tilde{\Phi}(\tilde{u})-\tilde{\Phi}(u))^{+}\\
 & \quad+C\delta\varepsilon^{-1}\mathbb{E}\big(1+\Vert u\Vert_{L_{m}(D_{T})}^{m}\big).
\end{align*}
With the same method, we also have
\begin{align*}
I_{4} & \leq-\mathbb{E}\int_{t,x,y}\text{\ensuremath{\varphi}}(t)\partial_{x_{i}}\varrho_{\varepsilon}(x-y)\partial_{x_{i}}\psi(x)(\Phi(\tilde{u})-\Phi(u))^{+}+C\delta\varepsilon^{-1}\mathbb{E}\big(1+\Vert\tilde{u}\Vert_{L_{m}(D_{T})}^{m}\big).
\end{align*}
Using the triangle inequality, the definition of $\varrho_{\varepsilon}$,
the boundness of $\partial_{x_{i}}\psi$ and the fact
\[
|\Phi(r)-\tilde{\Phi}(r)|\lesssim\lambda|r|^{\frac{m+1}{2}}+\mathbf{1}_{|r|\geq R_{\lambda}}|r|^{m},\quad\forall r\in\mathbb{R},
\]
we have
\begin{align*}
 & \mathbb{E}\int_{t,x,y}\text{\ensuremath{\varphi}}(t)\partial_{x_{i}}\varrho_{\varepsilon}(x-y)\partial_{x_{i}}\psi(x)\big((\tilde{\Phi}(\tilde{u})-\tilde{\Phi}(u))^{+}-(\Phi(\tilde{u})-\Phi(u))^{+}\big)\\
 & \leq\mathbb{E}\int_{t,x,y}\text{\ensuremath{\varphi}}(t)\partial_{x_{i}}\varrho_{\varepsilon}(x-y)\partial_{x_{i}}\psi(x)\big((\Phi(u)-\tilde{\Phi}(u))^{+}+(\tilde{\Phi}(\tilde{u})-\Phi(\tilde{u}))^{+}\big)\\
 & \lesssim\varepsilon^{-1}\lambda\mathbb{E}\big(\Vert u\Vert_{L_{m+1}(D_{T})}^{m+1}+\Vert\tilde{u}\Vert_{L_{m+1}(D_{T})}^{m+1}\big)\\
 & \quad +\varepsilon^{-1}\mathbb{E}\big(\big\Vert\mathbf{1}_{|u|\geq R_{\lambda}}u\big\Vert_{L_{m}(D_{T})}^{m}+\big\Vert\mathbf{1}_{|\tilde{u}|\geq R_{\lambda}}\tilde{u}\big\Vert_{L_{m}(D_{T})}^{m}\big).
\end{align*}
Similarly, for $I_{3,1}$, we have
\begin{align}
 & \mathbb{E}\int_{t,x,y}\varphi(t)\varrho_{\varepsilon}(x-y)\Delta_{x}\psi(x)\big((\tilde{\Phi}(\tilde{u})-\tilde{\Phi}(u))^{+}-(\tilde{\Phi}(\tilde{u})-\Phi(u))^{+}\big)\label{eq:=00005Cphi-Phi diff}\\
 & \lesssim\lambda\Vert u\Vert_{L_{m+1}(D_{T})}^{m+1}+\mathbb{E}\big\Vert\mathbf{1}_{|u|\geq R_{\lambda}}u\big\Vert_{L_{m}(D_{T})}^{m}.\nonumber 
\end{align}
To estimate the new term in \cref{eq:=00005Cphi-Phi diff}, using
 \cref{eq:approximation in Phi} in \cref{lem:approximation},
we have
\begin{align*}
 & \mathbb{E}\int_{t,x,y}\varphi(t)\varrho_{\varepsilon}(x-y)\Delta_{x}\psi(x)(\tilde{\Phi}(\tilde{u}(t,x))-\Phi(u(t,y)))^{+}\\
 & \leq\mathbb{E}\int_{t,x}\varphi(t)\Delta_{x}\psi(x)(\tilde{\Phi}(\tilde{u}(t,x))-\Phi(u(t,x)))^{+}\\
 & \quad+C\varepsilon^{\frac{1}{m+1}}\mathbb{E}(1+\Vert u\Vert_{L_{m+1}(D_{T})}^{m+1}+\Vert\nabla\llbracket\mathfrak{a}\rrbracket(u)\Vert_{L_{2}(D_{T})}^{2}).
\end{align*}
For $I_{5}$, using \cref{def:entropy-solution} (ii) and
\cite[Remark 3.1]{dareiotis2019entropy}, we have
\begin{align}
I_{5} & \leq-2\int_{t,x,y}\eta_{\delta}^{\prime\prime}(\tilde{u}-u)\nabla_{x}\llbracket\tilde{\mathfrak{a}}\rrbracket(\tilde{u})\cdot\nabla_{y}\llbracket\mathfrak{a}\rrbracket(u)\phi_{\varepsilon}\label{eq:I3 x}\\
 & =2\int_{t,x,y}\phi_{\varepsilon}\partial_{x_{i}}\llbracket\tilde{\mathfrak{a}}\rrbracket(\tilde{u})\cdot\partial_{y_{i}}\int_{u}^{\tilde{u}}\eta_{\delta}^{\prime\prime}(\tilde{u}-r)\mathfrak{a}(r)\mathrm{d}r\nonumber \\
 & =2\int_{t,x,y}\partial_{x_{i}y_{i}}\phi_{\varepsilon}\int_{0}^{\tilde{u}}\int_{u}^{\tilde{r}}\eta_{\delta}^{\prime\prime}(\tilde{r}-r)\tilde{\mathfrak{a}}(\tilde{r})\mathfrak{a}(r)\mathrm{d}\tilde{r}\mathrm{d}r\nonumber \\
 & =2\int_{t,x,y}\mathbf{1}_{u\leq\tilde{u}}\partial_{x_{i}y_{i}}\phi_{\varepsilon}\int_{u}^{\tilde{u}}\int_{u}^{\tilde{u}}\eta_{\delta}^{\prime\prime}(\tilde{r}-r)\tilde{\mathfrak{a}}(\tilde{r})\mathfrak{a}(r)\mathrm{d}\tilde{r}\mathrm{d}r.\nonumber 
\end{align}
Then, we have
\begin{align*}
I_{1}+I_{2}+I_{5} & \leq2\int_{t,x,y}\mathbf{1}_{u\leq\tilde{u}}|\partial_{x_{i}y_{i}}\phi_{\varepsilon}|\int_{u}^{\tilde{u}}\int_{u}^{\tilde{u}}\eta_{\delta}^{\prime\prime}(\tilde{r}-r)|\mathfrak{a}(r)-\mathfrak{\tilde{\mathfrak{a}}}(\tilde{r})|^{2}\mathrm{d}\tilde{r}\mathrm{d}r.
\end{align*}
Based on the estimates of $|\mathfrak{a}(r)-\mathfrak{\tilde{\mathfrak{a}}}(\tilde{r})|^{2}$
in the proof of \cite[Theorem 4.1]{dareiotis2019entropy} which using
 \cref{assu:=00005CPhi}, combining the proceeding estimates
and noticing $\varepsilon<1$, we have
\begin{align}
 & I\leq C(\delta\varepsilon^{-1}+\varepsilon^{\frac{1}{m+1}}+\varepsilon^{-2}\lambda^{2}+\varepsilon^{-1}\lambda+\varepsilon^{-2}\delta^{2\alpha})\mathbb{E}\big(1+\Vert u\Vert_{L_{m+1}(D_{T})}^{m+1}+\Vert\tilde{u}\Vert_{L_{m+1}(D_{T})}^{m+1}\big)\label{eq:Phi}\\
 & \quad+C\varepsilon^{-2}\mathbb{E}\Big(\big\Vert\mathbf{1}_{|u|\geq R_{\lambda}}(1+u)\big\Vert_{L_{m}(D_{T})}^{m}+\big\Vert\mathbf{1}_{|\tilde{u}|\geq R_{\lambda}}(1+\tilde{u})\big\Vert_{L_{m}(D_{T})}^{m}\Big)\nonumber \\
 & \quad+\mathbb{E}\int_{t,x}\varphi(t)\Delta_{x}\psi(x)(\tilde{\Phi}(\tilde{u}(t,x))-\Phi(u(t,x)))^{+}+C\varepsilon^{\frac{1}{m+1}}\mathbb{E}\Vert\nabla\llbracket\mathfrak{a}\rrbracket(u)\Vert_{L_{2}(D_{T})}^{2},\nonumber 
\end{align}
where $\alpha\in(0,1\land(m/2))$. Now, we focus on the terms $\mathcal{A}$.
With the fact that $\phi_{\varepsilon}(t,x,\cdot)\in C_{c}^{\infty}(D)$
for any $(t,x,\varepsilon)\in D_{T}\times(0,\bar{\varepsilon})$,
using the divergence theorem in $y$, we have
\[
\mathbb{E}\int_{t,x,y}\partial_{y_{i}y_{j}}\phi_{\varepsilon}\int_{0}^{\tilde{u}}\eta_{\delta}^{\prime}(\tilde{u}-r)a^{ij}(y,r)\mathrm{d}r+\mathbb{E}\int_{t,x,y}\partial_{y_{i}}\phi_{\varepsilon}\int_{0}^{\tilde{u}}\eta_{\delta}^{\prime}(\tilde{u}-r)a_{y_{j}}^{ij}(y,r)\mathrm{d}r=0.
\]
Then, from the support of $\eta_{\delta}^{\prime}$, we have
\begin{align*}
\mathcal{A} & =-\mathbb{E}\int_{t,x,y}\mathbf{1}_{u\leq\tilde{u}}\partial_{x_{i}y_{j}}\phi_{\varepsilon}\int_{u}^{\tilde{u}}\int_{u}^{\tilde{u}}\eta_{\delta}^{\prime\prime}(\tilde{r}-r)\big(a^{ij}(y,r)+a^{ij}(x,\tilde{r})\big)\mathrm{d}\tilde{r}\mathrm{d}r\\
 & \quad-\mathbb{E}\int_{t,x,y}\partial_{x_{i}}\phi_{\varepsilon}\eta_{\delta}^{\prime}(\tilde{u}-u)b^{i}(x,\tilde{u})+\mathbb{E}\int_{t,x,y}\partial_{y_{i}}\phi_{\varepsilon}\eta_{\delta}^{\prime}(\tilde{u}-u)b^{i}(y,u)\\
 & \quad+\mathbb{E}\int_{t,x,y}\partial_{x_{i}}\phi_{\varepsilon}\int_{u}^{\tilde{u}}a_{x_{j}}^{ij}(x,\tilde{r})\eta_{\delta}^{\prime}(\tilde{r}-u)\mathrm{d}\tilde{r}+\mathbb{E}\int_{t,x,y}\partial_{y_{i}}\phi_{\varepsilon}\int_{u}^{\tilde{u}}a_{y_{j}}^{ij}(y,r)\eta_{\delta}^{\prime}(\tilde{u}-r)\mathrm{d}r\\
 & \quad+\mathbb{E}\int_{t,x,y}\mathbf{1}_{u\leq\tilde{u}}\varphi(t)\partial_{x_{i}}(\varrho_{\varepsilon}(x-y)\partial_{x_{j}}\psi(x))\int_{u}^{\tilde{u}}\eta_{\delta}^{\prime}(\tilde{r}-u)a^{ij}(x,\tilde{r})\mathrm{d}\tilde{r}\\
 & \quad+\mathbb{E}\int_{t,x,y}\mathbf{1}_{u\leq\tilde{u}}\varphi(t)\partial_{y_{j}}\varrho_{\varepsilon}(x-y)\partial_{x_{i}}\psi(x)\int_{u}^{\tilde{u}}\int_{u}^{\tilde{u}}\eta_{\delta}^{\prime\prime}(\tilde{r}-r)a^{ij}(y,r)\mathrm{d}\tilde{r}\mathrm{d}r\\
 &\quad+\mathbb{E}\int_{t,x,y}\frac{1}{2}\eta_{\delta}^{\prime\prime}(\tilde{u}-u)\Big(\sum_{k=1}^{\infty}|\sigma_{x_{i}}^{ik}(x,\tilde{u})|^{2}+\sum_{k=1}^{\infty}|\sigma_{y_{i}}^{ik}(y,u)|^{2}\Big)\phi_{\varepsilon}\\
 & \eqqcolon\sum_{i=1}^{6}\mathcal{A}_{i}.
\end{align*}
Using
\cref{eq:sigma x}-\cref{eq:sigma r} in \cref{assu:Coefficients},
we have
\begin{align}
\mathcal{E}_{9}+\mathcal{A}_{6} & =\mathbb{E}\int_{t,x,y}\frac{1}{2}\eta_{\delta}^{\prime\prime}(\tilde{u}-u)\sum_{k=1}^{\infty}\big(\sigma_{x_{j}}^{jk}(x,\tilde{u})-\sigma_{y_{i}}^{ik}(y,u)\big)^{2}\phi_{\varepsilon}\label{eq:E9}\\
 & \lesssim\varepsilon^{2\bar{\kappa}}\delta^{-1}\mathbb{E}(1+\Vert u\Vert_{L_{m+1}(D_{T})}^{m+1})+\delta^{2}.\nonumber 
\end{align}
Next, with \cref{eq:sigma x}-\cref{eq:sigma r} in
\cref{assu:Coefficients}, we have
\begin{align*}
& \mathcal{A}_{4}+\mathcal{A}_{5}\\
 & =\mathbb{E}\int_{t,x,y}\mathbf{1}_{u\leq\tilde{u}}\varphi\varrho_{\varepsilon}\partial_{x_{i}x_{j}}\psi\int_{u}^{\tilde{u}}\eta_{\delta}^{\prime}(\tilde{r}-u)a^{ij}(x,\tilde{r})\mathrm{d}\tilde{r}\nonumber \\
 & \quad+\mathbb{E}\int_{t,x,y}\mathbf{1}_{u\leq\tilde{u}}\varphi(\partial_{x_{j}}\varrho_{\varepsilon}\partial_{x_{i}}\psi)\int_{u}^{\tilde{u}}\int_{u}^{\tilde{u}}\eta_{\delta}^{\prime\prime}(\tilde{r}-r)(a^{ij}(x,\tilde{r})-a^{ij}(y,r))\mathrm{d}\tilde{r}\mathrm{d}r\nonumber \\
 & \lesssim\mathbb{E}\int_{t,x,y}\mathbf{1}_{B\cap\overline{D}}(x)\varphi(t)\Big(\varepsilon\sum_{i}|\partial_{x_{i}}\varrho_{\varepsilon}(x-y)|+\varrho_{\varepsilon}(x-y)\Big)\cdot(\tilde{u}-u)^{+}\nonumber \\
 & \quad+\delta^{\beta}\varepsilon^{-1}\mathbb{E}\Vert\tilde{u}\Vert_{L_{1}(D_{T})}.\nonumber 
\end{align*}
Moreover, if $|r-\tilde{r}|\leq\delta$, from the definition of $a^{ij}$
and \cref{eq:sigma r} in \cref{assu:Coefficients}, we
have
\begin{align*}
 & \partial_{x_{i}y_{j}}\phi_{\varepsilon}\big(a^{ij}(x,\tilde{r})+a^{ij}(y,r)-\sigma_{r}^{ik}(x,\tilde{r})\sigma_{r}^{jk}(y,r)\big)\\
 & =\frac{1}{2}\varphi\psi\partial_{x_{i}y_{j}}\varrho_{\varepsilon}(\sigma_{r}^{ik}(x,\tilde{r})-\sigma_{r}^{ik}(y,r))(\sigma_{r}^{jk}(x,\tilde{r})-\sigma_{r}^{jk}(y,r))\\
 & \quad+\frac{1}{2}\varphi\partial_{y_{j}}\varrho_{\varepsilon}\partial_{x_{i}}\psi\Big(\sigma_{r}^{ik}(x,\tilde{r})(\sigma_{r}^{jk}(x,\tilde{r})-\sigma_{r}^{jk}(y,r))-\sigma_{r}^{jk}(y,r)(\sigma_{r}^{ik}(x,\tilde{r})-\sigma_{r}^{ik}(y,r))\Big)\\
 & \lesssim \mathbf{1}_{B\cap\overline{D}}(x)\varphi(t)\Big[(\varepsilon^{2}+\delta^{2\beta})\sum_{i,j}|\partial_{x_{i}y_{j}}\varrho_{\varepsilon}(x-y)|+(\varepsilon+\delta^{\beta})\sum_{j}|\partial_{y_{j}}\varrho_{\varepsilon}(x-y)|\Big].
\end{align*}
Therefore,
\begin{align}
 & \mathcal{A}_{1}+\mathcal{E}_{1}\label{eq:A}\\
 & \lesssim\mathbb{E}\int_{t,x,y}\mathbf{1}_{B\cap\overline{D}}(x)\varphi(t)\Big(\varepsilon^{2}\sum_{i,j}|\partial_{x_{i}y_{j}}\varrho_{\varepsilon}(x-y)|+\varepsilon\sum_{i}|\partial_{x_{i}}\varrho_{\varepsilon}(x-y)|\Big)(\tilde{u}-u)^{+}\nonumber \\
 & \quad+(\delta^{2\beta}\varepsilon^{-2}+\delta^{\beta}\varepsilon^{-1})\mathbb{E}\Vert\tilde{u}\Vert_{L_{1}(D_{T})}.\nonumber 
\end{align}
Similarly, we have 
\begin{align*}
 & \mathcal{A}_{3}+\mathcal{E}_{2}+\mathcal{E}_{4}\\
 & =\mathbb{E}\int_{t,x,y}\mathbf{1}_{u\leq\tilde{u}}\varphi\partial_{x_{i}}\varrho_{\varepsilon}\psi\int_{u}^{\tilde{u}}\int_{u}^{\tilde{u}}\eta_{\delta}^{\prime\prime}(\tilde{r}-r)(a_{x_{j}}^{ij}(x,\tilde{r})-a_{y_{j}}^{ij}(y,r))\mathrm{d}r\mathrm{d}\tilde{r}\\
 & \quad+\mathbb{E}\int_{t,x,y}\varphi\varrho_{\varepsilon}\partial_{x_{i}}\psi\int_{u}^{\tilde{u}}a_{x_{j}}^{ij}(x,\tilde{r})\eta_{\delta}^{\prime}(\tilde{r}-u)\mathrm{d}\tilde{r}\\
 & \quad+\mathbb{E}\int_{t,x,y}\Big(\mathbf{1}_{u\leq\tilde{u}}\varphi\partial_{x_{i}}\varrho_{\varepsilon}\psi\\
 &\quad \cdot\int_{u}^{\tilde{u}}\int_{u}^{\tilde{u}}\eta_{\delta}^{\prime\prime}(\tilde{r}-r)\big(\sigma_{r}^{ik}(x,\tilde{r})\sigma_{ry_{j}}^{jk}(y,r)-\sigma_{r}^{ik}(y,r)\sigma_{rx_{j}}^{jk}(x,\tilde{r})\big)\mathrm{d}r\mathrm{d}\tilde{r}\Big)\\
 & \quad+\mathbb{E}\int_{t,x,y}\mathbf{1}_{u\leq\tilde{u}}\varphi\varrho_{\varepsilon}\partial_{x_{i}}\psi\int_{u}^{\tilde{u}}\int_{u}^{\tilde{u}}\eta_{\delta}^{\prime\prime}(\tilde{r}-r)\sigma_{ry_{j}}^{jk}(y,r)\sigma_{r}^{ik}(x,\tilde{r})\mathrm{d}r\mathrm{d}\tilde{r},
\end{align*}
with relabeling $i\leftrightarrow j$ in $\mathcal{E}_{4}$. From
\cref{eq:sigma x}-\cref{eq:sigma r} in \cref{assu:Coefficients},
we have
\begin{align}
 & \mathcal{A}_{3}+\mathcal{E}_{2}+\mathcal{E}_{4}\label{eq:B2B4}\\
 & \lesssim\delta^{\beta}\varepsilon^{-1}\mathbb{E}(\Vert u\Vert_{L_{1}(D_{T})}+\Vert\tilde{u}\Vert_{L_{1}(D_{T})})\nonumber \\
 & \quad+\mathbb{E}\int_{t,x,y}\mathbf{1}_{B\cap\overline{D}}(x)\varphi(t)\Big(\varepsilon\sum_{i}|\partial_{x_{i}}\varrho_{\varepsilon}(x-y)|+\varrho_{\varepsilon}(x-y)\Big)(\tilde{u}-u)^{+}.\nonumber 
\end{align}
To estimate $\mathcal{A}_{2}+\mathcal{E}_{3}+\mathcal{E}_{7}$, define 
\begin{align*}
\mathcal{A}_{2} & =-\mathbb{E}\int_{t,x,y}\partial_{x_{i}}\phi_{\varepsilon}\eta_{\delta}^{\prime}(\tilde{u}-u)b^{i}(x,\tilde{u})\\
 & \quad+\mathbb{E}\int_{t,x,y}\partial_{y_{i}}\phi_{\varepsilon}\eta_{\delta}^{\prime}(\tilde{u}-u)b^{i}(y,u)\eqqcolon \mathcal{A}_{2,1}+\mathcal{A}_{2,2}.
\end{align*}
Using the definition of $b^{i}$ and \cref{eq:sigma x}-\cref{eq:sigma r}
in \cref{assu:Coefficients} and relabeling $i\leftrightarrow j$
in $\mathcal{E}_{7}$, we have 
\begin{align*}
 \mathcal{A}_{2,2}+\mathcal{E}_{7}
 & =\mathbb{E}\int_{t,x,y}\varphi\partial_{y_{i}}\varrho_{\varepsilon}\psi\int_{u}^{\tilde{u}}\eta_{\delta}^{\prime\prime}(\tilde{r}-u)\sigma_{y_{j}}^{jk}(y,u)\big(\sigma_{r}^{ik}(y,u)-\sigma_{r}^{ik}(x,u)\big)\mathrm{d}\tilde{r}\\
 & \quad+\mathbb{E}\int_{t,x,y}\varphi\partial_{y_{i}}\varrho_{\varepsilon}\psi\int_{u}^{\tilde{u}}\eta_{\delta}^{\prime\prime}(\tilde{r}-u)\sigma_{y_{j}}^{jk}(y,u)\big(\sigma_{r}^{ik}(x,u)-\sigma_{r}^{ik}(x,\tilde{r})\big)\mathrm{d}\tilde{r}\\
 & \quad+\mathbb{E}\int_{t,x,y}\varphi\varrho_{\varepsilon}\partial_{x_{i}}\psi\int_{u}^{\tilde{u}}\eta_{\delta}^{\prime\prime}(\tilde{r}-u)\sigma_{y_{j}}^{jk}(y,u)\sigma_{r}^{ik}(y,u)\mathrm{d}\tilde{r}\\
 & \quad-\mathbb{E}\int_{t,x,y}\varphi\varrho_{\varepsilon}\partial_{x_{i}}\psi\int_{u}^{\tilde{u}}\eta_{\delta}^{\prime\prime}(\tilde{r}-u)\sigma_{y_{j}}^{jk}(y,u)\big(\sigma_{r}^{ik}(y,u)-\sigma_{r}^{ik}(x,\tilde{r})\big)\mathrm{d}\tilde{r}\\
 & \leq\mathbb{E}\int_{t,x,y}\varphi\partial_{y_{i}}\varrho_{\varepsilon}\psi\eta_{\delta}^{\prime}(\tilde{u}-u)\sigma_{y_{j}}^{jk}(y,u)(y_{l}-x_{l})\int_{0}^{1}\sigma_{rx_{l}}^{ik}(x+\theta(y-x),u)\mathrm{d}\theta\\
 & \quad+C\delta^{\beta}\varepsilon^{-1}(1+\mathbb{E}\Vert u\Vert_{L_{1}(D_{T})})\\
 & \quad+\mathbb{E}\int_{t,x,y}\varphi\varrho_{\varepsilon}\partial_{x_{i}}\psi\eta_{\delta}^{\prime}(\tilde{u}-u)b^{i}(y,u)+C(\delta^{\beta}+\varepsilon)(1+\mathbb{E}\Vert u\Vert_{L_{1}(D_{T})}).
\end{align*}
Similarly, we have
\begin{align*}
 &\mathcal{A}_{2,1}+\mathcal{E}_{3}\\
 & =-\mathbb{E}\int_{t,x,y}\varphi\partial_{y_{i}}\varrho_{\varepsilon}\psi\int_{u}^{\tilde{u}}\eta_{\delta}^{\prime\prime}(\tilde{u}-r)\sigma_{x_{j}}^{jk}(x,\tilde{u})\big(\sigma_{r}^{ik}(y,r)-\sigma_{r}^{ik}(x,\tilde{u})\big)\mathrm{d}r\\
 & \quad-\mathbb{E}\int_{t,x,y}\varphi\varrho_{\varepsilon}\partial_{x_{i}}\psi\int_{u}^{\tilde{u}}\eta_{\delta}^{\prime\prime}(\tilde{u}-r)\sigma_{r}^{ik}(x,\tilde{u})\sigma_{x_{j}}^{jk}(x,\tilde{u})\mathrm{d}r\\
 & \leq-\mathbb{E}\int_{t,x,y}\varphi\partial_{y_{i}}\varrho_{\varepsilon}\psi\eta_{\delta}^{\prime}(\tilde{u}-u)\sigma_{x_{j}}^{jk}(x,\tilde{u})(y_{l}-x_{l})\int_{0}^{1}\sigma_{rx_{l}}^{ik}(x+\theta(y-x),\tilde{u})\mathrm{d}\theta\\
 & \quad+C\delta^{\beta}\varepsilon^{-1}(1+\mathbb{E}\Vert\tilde{u}\Vert_{L_{1}(D_{T})})-\mathbb{E}\int_{t,x,y}\varphi\varrho_{\varepsilon}\partial_{x_{i}}\psi\eta_{\delta}^{\prime}(\tilde{u}-u)b^{i}(x,\tilde{u}).
\end{align*}
From \cref{eq:sigma x}-\cref{eq:interchange} in 
\cref{assu:Coefficients}, we have 
\begin{align*}
 & \Big|\sigma_{y_{j}}^{jk}(y,u)\int_{0}^{1}\sigma_{rx_{l}}^{ik}(x+\theta(y-x),u)\mathrm{d}\theta-\sigma_{x_{j}}^{jk}(x,\tilde{u})\int_{0}^{1}\sigma_{rx_{l}}^{ik}(x+\theta(y-x),\tilde{u})\mathrm{d}\theta\Big|\\
 & \lesssim|x-y|(1+|u|+|\tilde{u}|)+|\sigma_{y_{j}}^{jk}(y,u)\sigma_{ry_{l}}^{ik}(y,u)-\sigma_{x_{j}}^{jk}(x,\tilde{u})\sigma_{rx_{l}}^{ik}(x,\tilde{u})|\\
 & \lesssim(|x-y|+|x-y|^{\bar{\kappa}})(1+|u|+|\tilde{u}|)+|u-\tilde{u}|,
\end{align*}
and 
\[
|b^{i}(y,u)-b^{i}(x,\tilde{u})|\lesssim|u-\tilde{u}|+|x-y|^{\bar{\kappa}}+|x-y|(1+|u|).
\]
Therefore, using the support of $\eta_{\delta}^{\prime}$, we have
\begin{align}
 & \mathcal{A}_{2}+\mathcal{E}_{3}+\mathcal{E}_{7}\label{eq:A21A23}\\
 & \lesssim(\delta^{\beta}\varepsilon^{-1}+\varepsilon^{\bar{\kappa}})\mathbb{E}(1+\Vert u\Vert_{L_{1}(D_{T})}+\Vert\tilde{u}\Vert_{L_{1}(D_{T})})\nonumber \\
 & \quad+\mathbb{E}\int_{t,x,y}\mathbf{1}_{B\cap\overline{D}}(x)\Big(\varepsilon\sum_{i}|\partial_{x_{i}}\varrho_{\varepsilon}(x-y)|+\varrho_{\varepsilon}(x-y)\Big)\varphi(t)(\tilde{u}-u)^{+}.\nonumber 
\end{align}
The remaining terms in \cref{eq:L1_mediem} are $\mathcal{B}$,
$\mathcal{E}_{5}$, $\mathcal{E}_{6}$ and $\mathcal{E}_{8}$. Using
\cref{eq:sigma r} in \cref{assu:Coefficients} and the
support of $\eta_{\delta}^{\prime}$, we have
\begin{align}
\mathcal{\mathcal{E}}_{5}  & \lesssim\mathbb{E}\int_{t,x,y}\mathbf{1}_{B\cap\overline{D}}(x)\varphi(t)\varrho_{\varepsilon}(x-y)(\tilde{u}-u)^{+},\nonumber 
\end{align}
and
\begin{align}
 & \mathcal{E}_{6}+\mathcal{E}_{8}\label{eq:E6E8}\\
 & \leq\mathbb{E}\int_{t,x,y}\phi_{\varepsilon}\int_{\tilde{u}}^{u}\eta_{\delta}^{\prime\prime}(\tilde{u}-r)\sigma_{ry_{i}}^{ik}(y,\tilde{u})\sigma_{x_{j}}^{jk}(x,\tilde{u})\mathrm{d}r+C\delta\mathbb{E}(1+\Vert\tilde{u}\Vert_{L_{1}(D_{T})})\nonumber \\
 & \quad+\mathbb{E}\int_{t,x,y}\phi_{\varepsilon}\int_{u}^{\tilde{u}}\eta_{\delta}^{\prime\prime}(\tilde{r}-u)\sigma_{y_{i}}^{ik}(y,u)\sigma_{rx_{j}}^{jk}(x,u)\mathrm{d}\tilde{r}+C\delta\mathbb{E}(1+\Vert u\Vert_{L_{1}(D_{T})})\nonumber \\
 & \lesssim\mathbb{E}\int_{t,x,y}\phi_{\varepsilon}\eta_{\delta}^{\prime}(\tilde{u}-u)\big|\sigma_{y_{i}}^{ik}(y,u)\sigma_{rx_{j}}^{jk}(x,u)-\sigma_{ry_{i}}^{ik}(y,\tilde{u})\sigma_{x_{j}}^{jk}(x,\tilde{u})\big|\nonumber \\
 & \quad+\delta\mathbb{E}(1+\Vert u\Vert_{L_{1}(D_{T})}+\Vert\tilde{u}\Vert_{L_{1}(D_{T})})\nonumber \\
 & \lesssim(\delta+\varepsilon^{\bar{\kappa}})\mathbb{E}(1+\Vert u\Vert_{L_{1}(D_{T})}+\Vert\tilde{u}\Vert_{L_{1}(D_{T})})+\mathbb{E}\int_{t,x,y}\mathbf{1}_{B\cap\overline{D}}(x)\varphi(t)\varrho_{\varepsilon}(x-y)(\tilde{u}-u)^{+}.\nonumber 
\end{align}
From \cref{eq:f r}-\cref{eq:F} in \cref{assu:Coefficients},
we have
\begin{align}
\mathcal{B} & =\mathbb{E}\int_{t,x,y}\mathbf{1}_{u\leq\tilde{u}}\varphi(t)\partial_{x_{i}}\varrho_{\varepsilon}(x-y)\psi(x)\int_{u}^{\tilde{u}}\int_{u}^{\tilde{u}}\eta_{\delta}^{\prime\prime}(\tilde{r}-r)\big(f_{r}^{i}(y,r)-f_{r}^{i}(x,\tilde{r})\big)\mathrm{d}\tilde{r}\mathrm{d}r\label{eq:L1 f}\\
 & \quad-\mathbb{E}\int_{t,x,y}\varphi(t)\varrho_{\varepsilon}(x-y)\partial_{x_{i}}\psi(x)\int_{u}^{\tilde{u}}f_{r}^{i}(x,\tilde{r})\eta_{\delta}^{\prime}(\tilde{r}-u)\mathrm{d}\tilde{r}\nonumber \\
 & \quad+\mathbb{E}\int_{t,x,y}\eta_{\delta}^{\prime}(\tilde{u}-u)\big(f_{x_{i}}^{i}(x,\tilde{u})-f_{y_{i}}^{i}(y,u)\big)\phi_{\varepsilon}\nonumber \\
 & \quad-\mathbb{E}\int_{t,x,y}\phi_{\varepsilon}\Big(\int_{u}^{\tilde{u}}f_{rx_{i}}^{i}(x,\tilde{r})\eta_{\delta}^{\prime}(\tilde{r}-u)\mathrm{d}\tilde{r}+\int_{u}^{\tilde{u}}f_{ry_{i}}^{i}(y,r)\eta_{\delta}^{\prime}(\tilde{u}-r)\mathrm{d}r\Big)\nonumber \\
 &\quad+ \mathbb{E}\int_{t,x,y}\eta_{\delta}^{\prime}(\tilde{u}-u)\big(F(x,\tilde{u})-F(y,u)\big)\phi_{\varepsilon}\nonumber \\
 & \lesssim(\varepsilon^{\tilde{\beta}}+\delta^{\beta}\varepsilon^{-1})\mathbb{E}(1+\Vert u\Vert_{L_{1}(D_{T})}+\Vert\tilde{u}\Vert_{L_{1}(D_{T})})\nonumber \\
 & \quad+\mathbb{E}\int_{t,x,y}\mathbf{1}_{B\cap\overline{D}}(x)\varphi(t)\Big(\varepsilon|\partial_{x_{i}}\varrho_{\varepsilon}(x-y)|+\varrho_{\varepsilon}(x-y)\Big)(\tilde{u}-u)^{+}.\nonumber
\end{align}
Combining \cref{eq:L1_mediem} with $|\eta_{\delta}(r)-r^{+}|\leq\delta$,
 and \cref{eq:Phi}-\cref{eq:L1 f}, one has \cref{eq:intermitineq-1}.

For the case $u$ has the $(\star)$-property, the proof is a similar procedure but
using a different assertion of the $(\star)$-property. Specifically, for each $(z,t,x)\in[0,\infty)\times D_{T}$,
since
\[
\big(\eta_{\delta}(z-\cdot),\rho_{\theta}(\cdot-t)\varphi(\frac{\cdot+t}{2}),\varrho_{\varepsilon}(x-\cdot)\psi(x)\mathbf{1}_{\overline{D}}(x)\big)\in\mathcal{E}\times C_{c}((0,T))\times C_{c}^{\infty}(D)
\]
for $\varepsilon\in(0,\bar{\varepsilon})$ and a sufficiently small $\theta$,
we apply the entropy inequality \cref{eq:entropy formulation} of
$u(s,y)$ with $\big(\eta_{\delta}(z-r),\phi_{\theta,\varepsilon}(t,x,\cdot,\cdot)\big)$
instead of $\big(\eta(r),\phi\big)$. Taking $z=\tilde{u}(t,x)$ by convolution,
integrating over $(t,x)\in D_{T}$ and taking expectations, we 
acquire an estimate of $-\mathbb{E}\int_{t,x,s,y}\eta_{\delta}(\tilde{u}(t,x)-u(s,y))\partial_{t}\phi_{\theta,\varepsilon}$. 

Similarly, fix $(z,s,y)\in[0,\infty)\times D_{T}$. Since
\[
\big(\eta_{\delta}(\cdot-z),\rho_{\theta}(s-\cdot)\varphi(\frac{s+\cdot}{2}),\varrho_{\varepsilon}(\cdot-y)\psi(\cdot)\mathbf{1}_{\overline{D}}(\cdot)\big)\in\mathcal{E}_{0}\times C_{c}((0,T))\times C^{\infty}(\overline{D})
\]
for all sufficiently small $\theta$, we use \cref{eq:entropy formulation}
of $\tilde{u}(t,x)$ with $\big(\eta_{\delta}(r-z),\phi_{\theta,\varepsilon}(\cdot,\cdot,s,y)\big)$
instead of $\big(\eta(r),\phi\big)$. Then, taking $z=u(s,y)$
by convolution, integrating over $(s,y)\in D_{T}$ and taking expectations,
we obtain an estimate of $-\mathbb{E}\int_{t,x,s,y}\eta_{\delta}(\tilde{u}(t,x)-u(s,y))\partial_{s}\phi_{\theta,\varepsilon}$. 

Note that $\phi_{\theta,\varepsilon}$ only acts on the set $\{s,t\in[0,T]:s>t\}$,
the stochastic integral term in the estimate of $-\mathbb{E}\int_{t,x,s,y}\eta_{\delta}(\tilde{u}(t,x)-u(s,y))\partial_{t}\phi_{\theta,\varepsilon}$
vanishes, while the one in the estimate of $-\mathbb{E}\int_{t,x,s,y}\eta_{\delta}(\tilde{u}(t,x)-u(s,y))\partial_{s}\phi_{\theta,\varepsilon}$
may not be zeros. Therefore, we apply assertion (ii) of the $(\star)$-property of $u$ with $h(r)=\eta_{\delta}^{\prime}(r)$ (then $h\in\mathcal{C}^{+}$)
, $g(x,y)=\mathbf{1}_{\overline{D}}(y)\psi(y)\varrho_{\varepsilon}(y-x)$ (then $g\in \Gamma_{B}^+$) and relabel $x\leftrightarrow y$ in the integral,
then the stochastic integral terms is controlled by $C\theta^{1-\mu}+\mathcal{E}(\tilde{u},u,\theta)$, in the integrand of which $u=u(t,y)$ and $\tilde{u}=\tilde{u}(t,x)$.
After combining two estimates and taking the limit $\theta\rightarrow0^{+}$,
there is only one time variable in the integrand.
The remaining terms can be estimated as above.
Hence, this lemma is proved.
\end{proof}
\begin{rem}
In the whole proof of \cref{lem:u-=00005Ctildeu}, the divergence
theorem is applied mostly in $y$ with $\varrho_{\varepsilon}(x-\cdot)\in C_{c}^{\infty}(D)$
for all $(x,\varepsilon)\in(B\cap\overline{D})\times(0,\bar{\varepsilon})$,
except \cref{eq:I3 x}, in which the divergence theorem in $x$ is
used with the zero boundary condition of $\tilde{u}$. 
\end{rem}

\begin{thm}
\label{thm:the first part}($L_{1}$-estimates)Let $0\leq\xi,\tilde{\xi}\in L_{m+1}(\Omega,\mathcal{F}_{0};L_{m+1}(D))$.
Suppose $u$ and $\tilde{u}$ are the entropy solutions to the Dirichlet
problems $\Pi(\Phi,\xi)$ and $\Pi(\tilde{\Phi},\tilde{\xi})$, respectively.
Let Assumptions \ref{assu:=00005CPhi}, \ref{assu:Coefficients} and \ref{assu:zero condition} hold for both $\Phi$ and $\tilde{\Phi}$.
If $u$ or $\tilde{u}$ has the $(\star)$-property, then,
\begin{enumerate}[label=(\roman*)]
\item if furthermore $\Phi=\tilde{\Phi}$, then
\begin{equation}
\underset{t\in[0,T]}{\mathrm{ess\,sup}}\,\mathbb{E}\Vert(\tilde{u}(t,\cdot)-u(t,\cdot))^{+}\Vert_{L_{1}(D)}\leq C\mathbb{E}\Vert(\tilde{\xi}-\xi)^{+}\Vert_{L_{1}(D)},\label{eq:The first part of esssup}
\end{equation}
where the constant $C$ depends only on $N_{0}$, $K$, $d$, $T$
and $|D|$.
\item for all $\delta\in(0,1)$, $\varepsilon\in(0,\bar{\varepsilon})$,
$\lambda\in[0,1]$ and $\alpha\in(0,1\land(m/2))$, we have
\begin{align}
 & \mathbb{E}\int_{0}^{T}\int_{x}(\tilde{u}(\tau,x)-u(\tau,x))^{+}\mathrm{d}\tau\label{eq:the first part estimation}\\
 & \leq C\mathbb{E}\int_{x}(\tilde{\xi}(x)-\xi(x))^{+}+C\sup_{|h|\leq2\varepsilon}\mathbb{E}\big\Vert\xi(\cdot)-\bar{\xi}(\cdot+h)\big\Vert_{L_{1}(D)}\nonumber \\
 & \quad+C\sup_{|h|\leq2\varepsilon}\mathbb{E}\Big\Vert\tilde{\xi}(\cdot)-\bar{\tilde{\xi}}(\cdot+h)\Big\Vert_{L_{1}(D)}\nonumber \\
 & \quad+C\varepsilon^{\frac{1}{m+1}}\mathbb{E}\big(\Vert\nabla\llbracket\mathfrak{a}\rrbracket(u)\Vert_{L_{2}(D_{T})}^{2}+\Vert\nabla\llbracket\tilde{\mathfrak{a}}\rrbracket(\tilde{u})\Vert_{L_{2}(D_{T})}^{2}\big)\nonumber \\
 & \quad+C\mathfrak{C}(\varepsilon,\delta,\lambda,\alpha)\mathbb{E}(1+\Vert u\Vert_{L_{m+1}(D_{T})}^{m+1}+\Vert\tilde{u}\Vert_{L_{m+1}(D_{T})}^{m+1})\nonumber \\
 & \quad+C\varepsilon^{-2}\mathbb{E}\Big(\big\Vert\mathbf{1}_{|u|\geq R_{\lambda}}(1+u)\big\Vert_{L_{m}(D_{T})}^{m}+\big\Vert\mathbf{1}_{|\tilde{u}|\geq R_{\lambda}}(1+\tilde{u})\big\Vert_{L_{m}(D_{T})}^{m}\Big),\nonumber 
\end{align}
where $\mathfrak{C}(\varepsilon,\delta,\lambda,\alpha)$ and $R_{\lambda}$
are introduced in \cref{lem:u-=00005Ctildeu}, 
\begin{align}
\bar{\xi}(x) & \coloneqq\begin{cases}
\xi(x), & x\in D;\\
0, & x\notin D,
\end{cases}\label{eq:xi bar}
\end{align}
and the constant $C$ depends only on $N_{0}$, $K$, $d$, $T$ and
$|D|$.
\end{enumerate}
\end{thm}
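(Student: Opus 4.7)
The plan is to sum the estimate \cref{eq:intermitineq-1} from \cref{lem:u-=00005Ctildeu} over $i \in \{0,1,\ldots,N\}$, exploiting the partition of unity property $\sum_{i=0}^{N}\psi_i \equiv 1$ on $\overline{D}$. The crucial observation is that the only potentially boundary-sensitive term on the right-hand side, namely $\mathbb{E}\int_{t,x}\varphi(t)\Delta_x\psi_i(x)(\tilde{\Phi}(\tilde{u})-\Phi(u))^+$, cancels after summation because $\sum_{i}\Delta_x\psi_i = \Delta_x\big(\sum_i \psi_i\big) = 0$ on $D$. This is exactly the purpose of introducing the partition of unity and is the key to avoiding the weight-function approach of \cite{dareiotis2020ergodicity}.

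After summation, the left-hand side becomes $-\mathbb{E}\int_{t,x,y}(\tilde{u}(t,x)-u(t,y))^{+}\Phi_{\varepsilon}(x,y)\partial_{t}\varphi(t)$ with the non-symmetric kernel $\Phi_{\varepsilon}(x,y)\coloneqq\sum_{i}\psi_i(x)\varrho_{\varepsilon,i}(x-y)$, which approximates $\delta_{0}(x-y)$ as $\varepsilon\to 0^{+}$. The remaining mollifier terms, of the form $\sum_{i}\mathbf{1}_{B_i\cap\overline{D}}(x)\big(\varepsilon^{2}|\partial_{x_iy_j}\varrho_{\varepsilon,i}|+\varepsilon|\partial_{x_i}\varrho_{\varepsilon,i}|+\varrho_{\varepsilon,i}\big)(\tilde{u}(t,x)-u(t,y))^{+}$, can be bounded by $C\mathbb{E}\int_{t,x,y}\Psi_{\varepsilon}(x,y)(\tilde{u}(t,x)-u(t,y))^{+}$ for some kernel $\Psi_{\varepsilon}$ supported in $\{|x-y|<\tilde{K}\varepsilon\}$ with $\int_{y}\Psi_{\varepsilon}(\cdot,y)\le C$ uniformly in $\varepsilon$. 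By inserting $u(t,x)-u(t,x)$ into the positive part and using the triangle inequality, this becomes a contribution that, after a limiting argument as $\varepsilon \to 0^+$, is controlled by $\int_{x}(\tilde{u}(t,x)-u(t,x))^{+}$ modulo a spatial continuity defect of $u$.

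For part (i), the equality $\Phi=\tilde{\Phi}$ allows us to take $\lambda=0$, so $R_{\lambda}=\infty$ and the $\mathbf{1}_{|u|\ge R_{\lambda}}$ terms disappear. Picking $\delta=\varepsilon^{p}$ with $p\in(1/\beta \vee 1/\alpha,\ 2\bar{\kappa})$ (which is nonempty thanks to $\beta\in((2\bar{\kappa})^{-1},1]$ and $\alpha$ chosen close to $1\wedge(m/2)$) yields $\mathfrak{C}(\varepsilon,\varepsilon^{p},0,\alpha)\to 0$ as $\varepsilon\to 0^{+}$, and the $\varepsilon^{1/(m+1)}\mathbb{E}\Vert\nabla\llbracket\mathfrak{a}\rrbracket(u)\Vert_{L_{2}}^{2}$ type terms also vanish by the a priori $H^{1}$-bound built into \cref{def:entropy-solution}. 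Choosing $\varphi=\varphi_{n}$ approximating $\gamma^{-1}(\mathbf{1}_{[s,s+\gamma]}-\mathbf{1}_{[\tau,\tau+\gamma]})$, as in the proof of \cref{prop:non-negative}, and then passing to the limits $\gamma\to 0^{+}$, $\varepsilon\to 0^{+}$, $s\to 0^{+}$ (using \cref{lem:initial value} to recover the initial data), followed by Gronwall's inequality in $\tau$, produces \cref{eq:The first part of esssup}.

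For part (ii), we keep $\varepsilon\in(0,\bar{\varepsilon})$, $\delta$, $\lambda$, $\alpha$ fixed, so we must preserve all the error terms explicitly. Taking $\varphi$ approximating $\mathbf{1}_{[\gamma,\tau]}$ with $\gamma\to 0^{+}$ and then integrating in $\tau\in(0,T)$, the initial-data term emerges as $\mathbb{E}\int_{x,y}(\tilde{\xi}(x)-\xi(y))^{+}\Phi_{\varepsilon}(x,y)$. Using $\text{supp}\,\varrho_{\varepsilon,i}\subset\{|z|<\tilde{K}\varepsilon\}$ from \cref{rem:support of rho} together with the zero extension $\bar{\xi}$ defined in \cref{eq:xi bar}, this is bounded by $C\mathbb{E}\Vert(\tilde{\xi}-\xi)^{+}\Vert_{L_{1}(D)}+C\sup_{|h|\le 2\varepsilon}\big(\mathbb{E}\Vert\xi-\bar{\xi}(\cdot+h)\Vert_{L_{1}(D)}+\mathbb{E}\Vert\tilde{\xi}-\bar{\tilde{\xi}}(\cdot+h)\Vert_{L_{1}(D)}\big)$, matching the statement. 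The main obstacle will be the careful bookkeeping in the reduction from the double integral $\int_{x,y}(\tilde{u}(t,x)-u(t,y))^{+}\Phi_{\varepsilon}(x,y)$ to $\int_{x}(\tilde{u}(t,x)-u(t,x))^{+}$: one must verify that all the shift-type errors arising from the asymmetry of $\Phi_{\varepsilon}$ either vanish in the limit (for part (i)) or are absorbed into the explicit error terms in $\mathfrak{C}(\varepsilon,\delta,\lambda,\alpha)$ and the initial-data shift moduli (for part (ii)).
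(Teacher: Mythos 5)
The proposal is correct and follows essentially the same strategy as the paper's proof: sum the single-chart estimate of Lemma \ref{lem:u-=00005Ctildeu} over the partition of unity so that the $\Delta_x\psi_i$ terms cancel, treat $\varepsilon|\partial_{x_i}\varrho_\varepsilon|$ and $\varepsilon^2|\partial_{x_ix_j}\varrho_\varepsilon|$ as approximate identities, and then for (i) send $\varepsilon\to 0^+$ with $\lambda=0$ and $\delta=\varepsilon^{p}$ (your admissible window for $p$ is equivalent to the paper's choice $p=2\vartheta$ with $\vartheta\in((m\wedge 2)^{-1}\vee(2\beta)^{-1},\bar\kappa)$) before applying Gronwall, while for (ii) keep the $\varepsilon,\delta,\lambda$-dependent errors and estimate the initial-data contribution via the zero extension $\bar\xi$ and translation moduli. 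You rightly flag the reduction from $\int_{x,y}$ to $\int_x$ as the remaining bookkeeping step; the paper closes it exactly as you indicate, via the spatial continuity estimates of Lemma \ref{lem:approximation} and the initial-time recovery Lemma \ref{lem:initial value}.
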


\begin{rem}
\label{rem:different star property}Since the positive part function
is not an even function, it requires us to prove estimates with the $(\star)$-property
of different entropy solutions $u$ or $\tilde{u}$. Using this fact,
we obtain the $L_{1}$-estimates with only one of the entropy solutions
has the $(\star)$-property, which is a key point in proving the uniqueness
in the proof of \cref{thm:main theorem}.
\end{rem}

\begin{proof}
We first assume that $\tilde{u}$ has the $(\star)$-property. Let
$0<s<\tau<T$ be Lebesgue points of the function
\[
t\rightarrow\mathbb{E}\int_{x,y}(\tilde{u}(t,x)-u(t,y))^{+}\psi(x)\varrho_{\varepsilon}(x-y),
\]
and fix a constant $\gamma\in(0,(\tau-s)\lor(T-\tau))$. Choose a
sequence of functions $\{\phi_{n}\}_{n\in\mathbb{N}}$ and the limit
$V_{(\gamma)}$ as in the proof of \cref{prop:non-negative},
take $\varphi=\phi_{n}$ in \cref{eq:intermitineq-1} (using the $(\star)$-property of $\tilde{u}$) and pass to the limit $n\rightarrow\infty$. Then, taking $\gamma\rightarrow0^{+}$
and using \cref{eq:the uniform estimates of u_n and Phi_n}, we have
\begin{align*}
 & -\mathbb{E}\int_{x,y}(\tilde{u}(s,x)-u(s,y))^{+}\varrho_{\varepsilon}(x-y)\psi(x)\\
 & +\mathbb{E}\int_{x,y}(\tilde{u}(\tau,x)-u(\tau,y))^{+}\varrho_{\varepsilon}(x-y)\psi(x)\\
 & \leq\mathbb{E}\int_{s}^{\tau}\int_{x}\Delta_{x}\psi(x)(\tilde{\Phi}(\tilde{u}(t,x))-\Phi(u(t,x)))^{+}\mathrm{d}t+M+C\mathbb{E}\int_{0}^{\tau}\int_{x,y}\mathbf{1}_{B\cap\overline{D}}(x)\\
 & \quad\cdot\Big(\varepsilon^{2}\sum_{i,j}|\partial_{x_{i}y_{j}}\varrho_{\varepsilon}(x-y)|+\varepsilon\sum_{i}|\partial_{x_{i}}\varrho_{\varepsilon}(x-y)|+\varrho_{\varepsilon}(x-y)\Big)(\tilde{u}(t,x)-u(t,y))^{+}\mathrm{d}t
\end{align*}
holds for almost all $s\in(0,\tau)$, where
\begin{align*}
M &\coloneqq C\varepsilon^{\frac{1}{m+1}}\mathbb{E}\Vert\nabla\llbracket\mathfrak{a}\rrbracket(u)\Vert_{L_{2}(D_{T})}^{2}
+C\mathfrak{C}(\varepsilon,\delta,\lambda,\alpha)\mathbb{E}(1+\Vert u\Vert_{L_{m+1}(D_{T})}^{m+1}+\Vert\tilde{u}\Vert_{L_{m+1}(D_{T})}^{m+1})\\
 & \quad+C\varepsilon^{-2}\mathbb{E}\Big(\big\Vert\mathbf{1}_{|u|\geq R_{\lambda}}(1+u)\big\Vert_{L_{m}(D_{T})}^{m}+\big\Vert\mathbf{1}_{|\tilde{u}|\geq R_{\lambda}}(1+\tilde{u})\big\Vert_{L_{m}(D_{T})}^{m}\Big).
\end{align*}
Then, for $\tilde{\gamma}\in(0,\tau)$, by averaging over $s\in(0,\tilde{\gamma})$,
setting $\tilde{\gamma}\rightarrow0^{+}$ and using 
\cref{lem:initial value}, we have
\begin{align}
 & \mathbb{E}\int_{x,y}(\tilde{u}(\tau,x)-u(\tau,y))^{+}\varrho_{\varepsilon}(x-y)\psi(x)\label{eq:L1 0-tau}\\
 & \leq\mathbb{E}\int_{x,y}(\tilde{\xi}(x)-\xi(y))^{+}\varrho_{\varepsilon}(x-y)\psi(x)\nonumber \\
 & \quad+\mathbb{E}\int_{0}^{\tau}\int_{x}\Delta_{x}\psi(x)(\tilde{\Phi}(\tilde{u}(t,x))-\Phi(u(t,x)))^{+}\mathrm{d}t+M+C\mathbb{E}\int_{0}^{\tau}\int_{x,y}\mathbf{1}_{B\cap\overline{D}}(x)\nonumber \\
 & \quad\cdot\Big(\varepsilon^{2}\sum_{i,j}|\partial_{x_{i}y_{j}}\varrho_{\varepsilon}(x-y)|+\varepsilon\sum_{i}|\partial_{x_{i}}\varrho_{\varepsilon}(x-y)|+\varrho_{\varepsilon}(x-y)\Big)(\tilde{u}(t,x)-u(t,y))^{+}\mathrm{d}t.\nonumber 
\end{align}
To prove \cref{eq:The first part of esssup}, taking $\lambda=0$
and $R_{\lambda}=\infty$, we have
\[
\mathbb{E}\Big(\big\Vert\mathbf{1}_{|u|\geq R_{\lambda}}(1+u)\big\Vert_{L_{m}(D_{T})}^{m}+\big\Vert\mathbf{1}_{|\tilde{u}|\geq R_{\lambda}}(1+\tilde{u})\big\Vert_{L_{m}(D_{T})}^{m}\Big)=0,
\]
and $\mathfrak{C}(\varepsilon,\delta,\lambda,\alpha)$ becomes
\[
\mathfrak{C}(\varepsilon,\delta,\alpha)\coloneqq\varepsilon^{-2}\delta^{2\beta}+\delta^{\beta}\varepsilon^{-1}+\varepsilon^{\bar{\kappa}}+\varepsilon^{\tilde{\beta}}+\varepsilon^{1/(m+1)}+\varepsilon^{2\bar{\kappa}}\delta^{-1}+\varepsilon^{-2}\delta^{2\alpha}.
\]
Since $\beta\in((2\bar{\kappa})^{-1},1]$, we can choose $\vartheta\in((m\land2)^{-1}\vee(2\beta)^{-1},\bar{\kappa})$
and $\alpha\in((2\vartheta)^{-1},1\land(m/2))$. Let $\delta=\varepsilon^{2\vartheta}$
and $\varepsilon\rightarrow0^+$, we have $\mathfrak{C}(\varepsilon,\delta,\alpha)\rightarrow0^+$.
Notice that $\varepsilon|\partial_{x_{i}}\varrho_{\varepsilon}|$
and $\varepsilon^{2}|\partial_{x_{i}x_{j}}\varrho_{\varepsilon}|$
are all approximations of the identity up to a constant. Adding over
$\psi_{i}$ from partition of unity, with the continuity of translations
in $L_{1}$, we have
\[
\mathbb{E}\int_{x}(\tilde{u}(\tau,x)-u(\tau,x))^{+}\leq\mathbb{E}\int_{x}(\tilde{\xi}(x)-\xi(x))^{+}+C\int_{0}^{\tau}\mathbb{E}\int_{x}(\tilde{u}(t,x)-u(t,x))^{+}\mathrm{d}t
\]
holds for almost all $\tau\in[0,T]$. Hence, \cref{eq:The first part of esssup}
follows from Gronwall's inequality.

Now, we prove \cref{eq:the first part estimation}. Notice
that
\begin{align*}
\mathbb{E}\int_{x,y}(\xi(x)-\xi(y))^{+}\varrho_{\varepsilon}(x-y)\psi(x) & \leq\int_{\mathbb{R}^{d}}\varrho_{\varepsilon}(h)\cdot\mathbb{E}\int_{B\cap D}(\xi(x)-\xi(x-h))^{+}\mathrm{d}x\mathrm{d}h\\
 & \leq\sup_{|h|\leq\varepsilon}\mathbb{E}\int_{B\cap D}(\bar{\xi}(x)-\bar{\xi}(x-h))^{+}\mathrm{d}x.
\end{align*}
Fixing $s_{1}\in(0,T]$ and integrating \cref{eq:L1 0-tau} over
$\tau\in(0,s_{1})$, we have
\begin{align*}
 & \mathbb{E}\int_{0}^{s_{1}}\int_{x,y}(\tilde{u}(\tau,x)-u(\tau,y))^{+}\varrho_{\varepsilon}(x-y)\psi(x)\mathrm{d}\tau\\
 & \leq T\mathbb{E}\int_{x}(\tilde{\xi}(x)-\xi(x))^{+}\psi(x)+T\sup_{|h|\leq2\varepsilon}\mathbb{E}\int_{B\cap D}(\bar{\xi}(x)-\bar{\xi}(x-h))^{+}\mathrm{d}x\\
 & \quad+\mathbb{E}\int_{0}^{s_{1}}\int_{0}^{\tau}\int_{x}\Delta_{x}\psi(x)(\tilde{\Phi}(\tilde{u}(t,x))-\Phi(u(t,x)))^{+}\mathrm{d}t\mathrm{d}\tau+TM\\
 &\quad+C\mathbb{E}\int_{0}^{s_{1}}\int_{0}^{\tau}\int_{x,y}\bigg[\mathbf{1}_{B\cap\overline{D}}(x)\cdot\Big(\varepsilon^{2}\sum_{i,j}|\partial_{x_{i}y_{j}}\varrho_{\varepsilon}(x-y)|\\
 & \quad+\varepsilon\sum_{i}|\partial_{x_{i}}\varrho_{\varepsilon}(x-y)|+\varrho_{\varepsilon}(x-y)\Big)(\tilde{u}(t,x)-u(t,y))^{+}\bigg]\mathrm{d}t\mathrm{d}\tau.
\end{align*}
Notice that $\varepsilon|\partial_{x_{i}}\varrho_{\varepsilon}|$
and $\varepsilon^{2}|\partial_{x_{i}x_{j}}\varrho_{\varepsilon}|$
are approximations of the identity up to a constant. Taking $\varepsilon\rightarrow0$
and adding with different $\psi_{i}$, from the partition of unity,
we have
\begin{align*}
  &\mathbb{E}\int_{0}^{s_{1}}\int_{x}(\tilde{u}(\tau,x)-u(\tau,x))^{+}\mathrm{d}\tau\\
 & \lesssim\mathbb{E}\int_{x}(\tilde{\xi}(x)-\xi(x))^{+}+\sup_{|h|\leq2\varepsilon}\mathbb{E}\int_{D}(\bar{\xi}(x)-\bar{\xi}(x-h))^{+}\mathrm{d}x\\
 & \quad+M+\mathbb{E}\int_{0}^{s_{1}}\int_{0}^{t}\int_{x}(\tilde{u}(\tau,x)-u(\tau,x))^{+}\mathrm{d}\tau\mathrm{d}t.
\end{align*}
Using Gronwall's inequality, we acquire \cref{eq:the first part estimation}.

For the case that $u$ in $(\tilde{u}-u)^+$ has the $(\star)$-property, using \cref{eq:intermitineq-1} with the $(\star)$-property of $u$ and following the proceeding method, we obtain the desired estimates.
\end{proof}

\section{Approximation\label{sec:Approximation}}

We approximate the function $\Phi$ to make the approximate equations
non-degenerate. The following proposition is taken from \cite{dareiotis2019entropy,dareiotis2020nonlinear}
and we refer to \cite[Proposition 5.1]{dareiotis2019entropy} for
the proof.
\begin{prop}
\label{prop:Phi_n}Let $\Phi$ satisfy \cref{assu:=00005CPhi}
with a constant $K>1$. Then, for all $n$ there exists an increasing
function $\Phi_{n}\in C^{\infty}(\mathbb{R})$ with bounded derivatives,
satisfying \cref{assu:=00005CPhi} with constant $3K$,
such that $\mathfrak{a}_{n}(r)\geq2/n$, and 
\begin{equation}
\sup_{|r|\leq n}|\mathfrak{a}(r)-\mathfrak{a}_{n}(r)|\leq4/n.\label{eq:the approximation of Phi}
\end{equation}
\end{prop}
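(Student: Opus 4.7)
The plan is to first construct $\mathfrak{a}_n$ directly with the required properties and then recover $\Phi_n$ via $\Phi_n(r)\coloneqq\int_0^r\mathfrak{a}_n^2(s)\,\mathrm{d}s$. With this definition, $\Phi_n(0)=0$ and $\Phi_n'=\mathfrak{a}_n^2$, so smoothness, strict monotonicity (as long as $\mathfrak{a}_n>0$), and boundedness of all derivatives of $\Phi_n$ reduce to the corresponding properties of $\mathfrak{a}_n$.

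For the construction of $\mathfrak{a}_n$, I would proceed in three steps. First, truncate $\mathfrak{a}$ outside a large interval: pick $\chi\in C^\infty(\mathbb{R})$ with $\chi\equiv 1$ on $[-(n+1),n+1]$ and $\chi\equiv 0$ outside $[-(n+2),n+2]$, and set $\hat{\mathfrak{a}}(r)\coloneqq\chi(r)\mathfrak{a}(r)+(1-\chi(r))\mathfrak{a}(\operatorname{sgn}(r)(n+2))$, which is bounded and agrees with $\mathfrak{a}$ on $[-(n+1),n+1]$. Second, mollify: let $\tilde{\mathfrak{a}}_n\coloneqq\hat{\mathfrak{a}}\ast\rho_{\eta_n}$, where $\eta_n>0$ is chosen so small that $\sup_{|r|\leq n}|\tilde{\mathfrak{a}}_n-\mathfrak{a}|\leq 2/n$; such a choice exists by uniform continuity of $\mathfrak{a}$ on $[-(n+1),n+1]$. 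Third, shift: put $\mathfrak{a}_n\coloneqq\tilde{\mathfrak{a}}_n+2/n$, so that $\mathfrak{a}_n\geq 2/n$ and $\sup_{|r|\leq n}|\mathfrak{a}_n-\mathfrak{a}|\leq 4/n$, which is exactly the required approximation.

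It remains to verify Assumption~\ref{assu:=00005CPhi} for $\Phi_n$ with constant $3K$. The value at zero and the lower bound $\mathfrak{a}_n(r)\geq(3K)^{-1}\mathbf{1}_{|r|\geq 1}$ are immediate perturbations of the corresponding properties of $\mathfrak{a}$, costing at most an $O(1/n)$ error, and the factor $3$ in the new constant absorbs this slack once $K\geq 1$. The integrated Hölder-type lower bound on $|\llbracket\mathfrak{a}_n\rrbracket(r)-\llbracket\mathfrak{a}_n\rrbracket(s)|$ transfers from $\mathfrak{a}$ to $\mathfrak{a}_n$ by the same perturbation argument, since $\mathfrak{a}_n\geq\tilde{\mathfrak{a}}_n$ and the integrated mollification error is controlled uniformly on $[-n,n]$.

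The main technical obstacle will be the pointwise derivative bound $|\mathfrak{a}_n'(r)|\leq 3K|r|^{(m-3)/2}\mathbf{1}_{r\neq 0}$, since $\mathfrak{a}'$ itself may be singular at the origin when $1<m<3$. Writing $\mathfrak{a}_n'=\hat{\mathfrak{a}}'\ast\rho_{\eta_n}$, the bound follows easily in the regime $|r|\geq 2\eta_n$, where $|s|\asymp|r|$ on the support of the mollifier and the factor $3$ absorbs a constant depending only on $m$. In the delicate regime $|r|\leq 2\eta_n$ one has to exploit that $|s|^{(m-3)/2}\in L^1_{\mathrm{loc}}(\mathbb{R})$ (which uses $m>1$) to estimate $\|\mathfrak{a}'\ast\rho_{\eta_n}\|_{L^\infty}$ by $\eta_n^{(m-3)/2}$ up to a constant, and to match this against $|r|^{(m-3)/2}$ on the appropriate scale. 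The choice of $\eta_n$ — which must be tuned to $n$, $m$, and $K$ — is where the margin between $K$ and $3K$ is actually consumed, and completing this pointwise estimate (together with the analogous estimates for $\Phi_n''=2\mathfrak{a}_n\mathfrak{a}_n'$ to conclude boundedness of derivatives) is the crux of the proof.
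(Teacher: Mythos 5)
The paper delegates this proof to the external reference it cites, so your proposal is reconstructing it from scratch. Your broad plan — build $\mathfrak{a}_n$ first and set $\Phi_n(r)=\int_0^r\mathfrak{a}_n^2(s)\,\mathrm{d}s$ — is sound, and several verifications you sketch are correct: the $\llbracket\cdot\rrbracket$ lower bound transfers because the additive shift $(2/n)(r-s)$ offsets the uniform mollification error $\leq(2/n)|r-s|$; the non-degeneracy $\mathfrak{a}_n\geq 2/n$ is immediate since $\hat{\mathfrak{a}}\geq 0$ is preserved by the convolution; and the lower bound $\mathfrak{a}_n\geq(3K)^{-1}$ on $|r|\geq 1$ follows by a perturbation once $\eta_n$ is small. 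But the step you rightly call the crux — the pointwise bound $|\mathfrak{a}_n'(r)|\leq 3K|r|^{(m-3)/2}$ — does not follow from the argument you describe, and the suggested fix (tuning $\eta_n$) cannot repair it.

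Here is why. For $1<m<3$ and $|r|\asymp\eta_n$, the mollified derivative $\tilde{\mathfrak{a}}_n'(r)=(\hat{\mathfrak{a}}'\ast\rho_{\eta_n})(r)$ averages a function bounded by $K|s|^{(m-3)/2}$ over an interval of length $\sim\eta_n$ that abuts or crosses the origin. Since $\int_0^{\eta_n}s^{(m-3)/2}\,\mathrm{d}s=\tfrac{2}{m-1}\eta_n^{(m-1)/2}$ and $\rho_{\eta_n}\leq 2/\eta_n$, the best one can extract is $|\tilde{\mathfrak{a}}_n'(r)|\lesssim\tfrac{K}{m-1}\,\eta_n^{(m-3)/2}$, whereas the target at $r\asymp\eta_n$ is $3K|r|^{(m-3)/2}\asymp 3K\,\eta_n^{(m-3)/2}$. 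For $m$ close to $1$ the factor $1/(m-1)$ dwarfs $3$, so the bound fails. Both sides scale the same way in $\eta_n$, so tuning $\eta_n$ is of no help; the failure is structural, not a matter of constants. What is missing is a desingularization of $\mathfrak{a}$ \emph{before} the convolution: for instance, replace $\mathfrak{a}$ on $(-\epsilon_n,\epsilon_n)$ by a constant, choose $\epsilon_n$ small enough that the resulting uniform error on $[-n,n]$ (of order $\epsilon_n^{(m-1)/2}/(m-1)$, still controllable) stays below $2/n$, and only \emph{then} mollify at a much smaller scale $\eta_n\ll\epsilon_n$. In that case the mollification window never straddles the singularity, so $|s|^{(m-3)/2}$ is only sampled at $|s|\gtrsim\epsilon_n-\eta_n$ and one obtains $|\mathfrak{a}_n'(r)|\leq(1+O(\eta_n/\epsilon_n))K|r|^{(m-3)/2}$, comfortably within the $3K$ budget. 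Without such a pre-flattening step the construction does not verify \cref{assu:=00005CPhi} with constant $3K$ for $m$ near $1$ (and for $m>3$ it even fails at $r=0$, where the right-hand side vanishes but the convolved derivative need not).
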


Define $\xi_{n}\coloneqq\xi\wedge n$. Denote by $(\cdot,\cdot)$
the inner product in $L_{2}(D)$.
\begin{defn}
An $L_{2}$-solution  $u_{n}$ to $\Pi(\Phi_{n},\xi_{n})$ is a continuous $L_{2}(D)$-valued
process, such that $u_{n}\in L_{2}(\Omega_{T};H_{0}^{1}(D))$,
$\nabla\Phi_{n}(u_{n})\in L_{2}(\Omega_{T};L_{2}(D))$, and the equality
\begin{align*}
(u_{n}(t,\cdot),\phi) & =(\xi_{n},\phi)-\int_{0}^{t}\Big((\nabla\Phi_{n}(u_{n}),\nabla\phi)+(a^{ij}(\cdot,u_{n})\partial_{x_{j}}u_n+b^{i}(\cdot,u_{n})\\
 & \quad+f^{i}(\cdot,u_{n}),\partial_{x_{i}}\phi)+(F(\cdot,u_{n}),\phi)\Big)\mathrm{d}s-\int_{0}^{t}(\sigma^{k}(\cdot,u_{n}),\nabla\phi)\mathrm{d}W^{k}(s)
\end{align*}
holds for all $\phi\in C_{c}^{\infty}(D)$, almost surely for all
$t\in[0,T]$.
\end{defn}

Differing from \cref{def:entropy-solution}, with a strong
regularity of the solution $u_{n}$, we can consider the Dirichlet
boundary condition in the sense of trace. From \cref{eq:sigma x},
\cref{eq:f x} and \cref{eq:F} in \cref{assu:Coefficients},
we have the linear growth of $\sigma_{x_{i}}^{i}$, $f_{x_{i}}^{i}$
and $F$ in $r$. Combining with \cref{eq:a K} in \cref{assu:=00005CPhi},
for all $p\geq2$, the $L_{2}$-solution $u_{n}$ to $\Pi(\Phi_{n},\xi_{n})$
has the following a priori estimates
\begin{align}
\mathbb{E}\sup_{t\leq T}\Vert u_{n}\Vert_{L_{2}(D)}^{p}+\mathbb{E}\Vert\nabla\llbracket\mathfrak{a}_{n}\rrbracket(u_{n})\Vert_{L_{2}(D_{T})}^{p} & \leq C(1+\mathbb{E}\Vert\xi_{n}\Vert_{L_{2}(D)}^{p}),\label{eq:the uniform estimates of u_n and a_n}\\
\mathbb{E}\sup_{t\leq T}\Vert u_{n}\Vert_{L_{m+1}(D)}^{m+1}+\mathbb{E}\Vert\nabla\Phi_{n}(u_{n})\Vert_{L_{2}(D_{T})}^{2} & \leq C(1+\mathbb{E}\Vert\xi_{n}\Vert_{L_{m+1}(D)}^{m+1}).\label{eq:the uniform estimates of u_n and Phi_n}
\end{align}
The proof of the above two estimates is almost the same as \cite[Lemma A.1]{dareiotis2020nonlinear}
with the Dirichlet boundary condition of $u_{n}$, and we omit it
here. It is worth noting that the fact $\mathfrak{a}_{n}\geq2/n>0$
and \cref{eq:the uniform estimates of u_n and a_n} indicate
\begin{equation}
\mathbb{E}\Vert\nabla u_{n}\Vert_{L_{2}(D_{T})}^{p}\leq C(n)(1+\mathbb{E}\Vert\xi_{n}\Vert_{L_{2}(D)}^{p}).\label{eq:u_n one order}
\end{equation}

\begin{rem}
\label{rem:L2=00003Dentropy}Note that for all $(\eta,\varphi,\varrho)\in\mathcal{E}\times C_{c}^{\infty}([0,T))\times C_{c}^{\infty}(D)$
or $(\eta,\varphi,\varrho)\in\mathcal{E}_{0}\times C_{c}^{\infty}([0,T))\times C^{\infty}(\overline{D})$
and $\phi\coloneqq\varphi\times\varrho\geq0$, applying It\^{o}'s formula
(see e.g. \cite{krylov2013relatively}) to $u_{n}\mapsto\int_{D}\eta(u_{n})\phi\mathrm{d}x$,
with \cref{eq:the uniform estimates of u_n and a_n}-\cref{eq:the uniform estimates of u_n and Phi_n}
and \cref{assu:=00005CPhi}, we have that the $L_{2}$-solution
$u_{n}$ is also an entropy solution to $\Pi(\Phi_{n},\xi_{n})$.
Using \cref{prop:non-negative}, when $0\leq\xi\in L_{m+1}(\Omega,\mathcal{F}_{0};L_{m+1}(D))$,
we have $u_{n}\geq0$ for almost all $(\omega,t,x)\in\Omega_{T}\times D$.
\end{rem}

\begin{prop}
\label{prop:u_n star property}Let $0\leq\xi\in L_{m+1}(\Omega,\mathcal{F}_{0};L_{m+1}(D))$ and Assumptions \ref{assu:=00005CPhi}, \ref{assu:Coefficients} and \ref{assu:zero condition} hold.
 For each $n\in\mathbb{N}$, let $u_{n}$
be an $L_{2}$-solution of $\Pi(\Phi_{n},\xi_{n})$. Then, $u_{n}$
has the $(\star)$-property. If in addition $\mathbb{E}\Vert\xi\Vert_{L_{2}(D)}^{2(m+1)/m}<\infty$,
the constants $C$ in \cref{def:star property} are independent of $n$.
\end{prop}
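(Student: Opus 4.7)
The plan is to exploit the superior regularity of the $L_2$-solution $u_n$: since $u_n \in L_2(\Omega_T;H_0^1(D))$, $\nabla\Phi_n(u_n) \in L_2(\Omega_T;L_2(D))$, and $u_n$ satisfies the SPDE in the strong Itô sense, I can evaluate $\mathbb{E}\int_{s,x} H_\theta(s,x,u_n(s,x))$ directly by applying Itô's formula, rather than passing through the entropy inequality. Starting from \cref{lem:appro for Htheta},
\[
\mathbb{E}\int_{s,x} H_\theta(s,x,u_n(s,x)) = \lim_{\lambda\rightarrow 0}\mathbb{E}\int_{s,x,z} H_\theta(s,x,z)\rho_\lambda(u_n(s,x)-z),
\]
I would apply the integration-by-parts rule for semimartingales to the product $H_\theta(\cdot,x,z)\,\rho_\lambda(u_n(\cdot,x)-z)$ on $[0,T]$, noting that $H_\theta(\cdot,x,z)$ is a martingale in $s$ (its $s$-dependence enters only through $\rho_\theta(s-t)\varphi((t+s)/2)$, and these derivatives are absorbed by choosing $\theta$ small relative to $\mathrm{supp}\,\varphi$, using \cref{rem:Htheta format}). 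Since $\varphi \in C_c^\infty((0,T))$, the boundary contributions at $s=0,T$ vanish.

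The core computation is the cross-variation between $\mathrm{d}_s H_\theta$ and $\mathrm{d}_s u_n(s,x)$. The three $\mathrm{d}W^k$-factors comprising $H_\theta$ pair with the single Stratonovich-to-Itô form $\nabla\cdot\sigma^k(x,u_n)\,\mathrm{d}W^k$ appearing in the equation for $u_n$; after expanding $\nabla\cdot\sigma^k = \sigma_{x_i}^{ik}(x,u_n) + \sigma_r^{ik}(x,u_n)\partial_{x_i}u_n$ and integrating by parts in $y$ (legitimate for case (i) because $g \in \Gamma_{B_i}^-$ keeps $\varrho_\varepsilon(x-\cdot)\psi(\cdot)$ supported in $D$), one obtains six of the nine terms appearing in $\mathcal{E}(u,u_n,\theta)$. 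The remaining three terms arise from the contribution of $\partial_z^2 H_\theta$ against the quadratic-variation density $|\nabla\cdot\sigma^k(x,u_n)|^2$ after passing $\lambda\rightarrow 0^+$ (this converts $\rho_\lambda(u_n-z)$ into the delta at $z=u_n$, producing the $u=w$ contractions). Contributions that do not match $\mathcal{E}$ are drift terms of the form $\partial_z H_\theta(s,x,u_n)\cdot(\text{drift of }u_n)$; by Cauchy--Schwarz, \cref{lem:partial H} with any $\lambda \in ((m+3)/(2m+2),\mu)$, and the a priori bounds \cref{eq:the uniform estimates of u_n and a_n}--\cref{eq:the uniform estimates of u_n and Phi_n}, these are bounded by $C\theta^{-\lambda}\mathcal{N}_m(u_n)^{1/(m+1)}\cdot(\text{drift norms})$; since $(1-\lambda)>1-\mu$ this gives the $C\theta^{1-\mu}$ remainder after rebalancing exponents.

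For assertion (ii), the test function is $g \in \Gamma_{B_i}^+$ and $h \in \mathcal{C}^+$; the same Itô bookkeeping applies, but now the integration by parts in the cross-variation is performed in $x$ rather than $y$, using that $g(\cdot,y) \in C_c^\infty(D)$ and that $u_n \in H_0^1(D)$ has a well-defined vanishing trace. This is precisely why the asymmetric definitions of $\Gamma_{B_i}^\pm$ and $\mathcal{C}^\pm$ in \Cref{sec:star-property} are required and consistent, as advertised in \cref{rem:adjust star-property}. The principal difficulty is the sheer bookkeeping: matching all nine terms of $\mathcal{E}$ against the three martingale differentials in $H_\theta$ paired with the two-part Stratonovich correction and the genuine $\mathrm{d}W^k$ in $u_n$, and verifying that every apparent excess term either cancels through an integration by parts permitted by the support structure of $g$, or is absorbed into the $C\theta^{1-\mu}$ remainder via \cref{lem:partial H}. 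Finally, for the uniformity of $C$ in $n$ under $\mathbb{E}\Vert\xi\Vert_{L_2(D)}^{2(m+1)/m}<\infty$: the constant depends on $u_n$ only through $\mathcal{N}_m(u_n)$, which by Hölder interpolation and \cref{eq:the uniform estimates of u_n and a_n}--\cref{eq:the uniform estimates of u_n and Phi_n} is controlled by $\mathbb{E}\Vert\xi_n\Vert_{L_2(D)}^{2(m+1)/m}+\mathbb{E}\Vert\xi_n\Vert_{L_{m+1}(D)}^{m+1}$, both bounded uniformly in $n$ under the stated hypothesis (the exponent $2(m+1)/m$ is exactly what Lemma~\ref{lem:partial H} demands when $p = m+1$).
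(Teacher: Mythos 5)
Your high-level strategy (apply It\^{o} to $\rho_\lambda(u_n-z)$, condition on $\mathcal{F}_{s-\theta}$ so only the increment matters, extract $\mathcal{E}$ from the cross-variation of the two stochastic integrals, bound the rest by $C\theta^{1-\mu}$ via \cref{lem:partial H}) does coincide with the paper's approach. But there are two genuine gaps.

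First, and most importantly, you have not addressed the boundary terms on $\partial D$ that arise when you integrate by parts in $x$. The drift of $u_n$ contains $\Delta\Phi_n(u_n)$ and $\partial_{x_i}(a^{ij}\partial_{x_j}u_n + b^i + f^i)$; pairing these with $\rho_\lambda'(u_n-z)H_\theta$ and integrating by parts in $x$ produces a boundary integral over $\partial D$ that does not vanish automatically, since $g\in\Gamma_{B_i}^-$ restricts only the $y$-variable of the test function, not $x$. The paper first mollifies $u_n$ to a version $u_n^{(\gamma)}$ satisfying the equation pointwise (you skipped this step too — It\^{o}'s formula cannot be applied pointwise to the raw $H_0^1$-valued process), then shows the resulting boundary term $N_{\lambda,\gamma}^{(1,3)}$ vanishes because $u_n$ has zero trace, $\mathrm{supp}\,\rho_\lambda'\subset[0,\infty)$ forces $z\le 0$ on $\partial D$, and $h\in\mathcal{C}^-$ together with $u\ge 0$ forces $h(u(t,y)-z)\equiv 0$ there. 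This is precisely the argument the asymmetric definitions of $\mathcal{C}^\pm$ and $\Gamma_{B_i}^\pm$ were engineered to make possible, and it is the one genuinely new difficulty of the Dirichlet case relative to the torus; omitting it is omitting the heart of the proof. For assertion (ii) you correctly observe that $g(\cdot,y)\in C_c^\infty(D)$ kills the $x$-boundary terms, but there is then a separate boundary term from integration by parts in $y$ (in \cref{eq:divergence}), which the paper handles via the support restriction $\mathrm{supp}\,h\subset(0,\infty)$ for $h\in\mathcal{C}^+$.

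Second, your bookkeeping of where the nine terms of $\mathcal{E}$ originate is off. You claim six come from the cross-variation of the $H_\theta$-noise with the $u_n$-noise and three from $\partial_z^2 H_\theta$ against the quadratic variation. In fact all nine of $\mathcal{E}_1,\ldots,\mathcal{E}_9$ come from the single cross-variation term $N_{\lambda,\gamma}^{(4)}$: after It\^{o} isometry it produces six integrals $I_1,\ldots,I_6$ (three densities in $H_\theta$ times two pieces of $\nabla\cdot\sigma$), and integration by parts in $x$ and $z$ and the limit $\lambda\to 0$ regroup these into the nine terms of $\mathcal{E}$. The genuine It\^{o} correction $N_{\lambda,\gamma}^{(5)}$ involving $\rho_\lambda''$ is merely an error term absorbed into $C\theta^{1-\mu}$, not a source of $\mathcal{E}$-terms.

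Your observation about the uniformity in $n$ is correct: the $n$-dependence enters through $\mathbb{E}\|\nabla\llbracket\mathfrak{a}_n\rrbracket(u_n)\|_{L_2(D_T)}^{2(m+1)/m}$ and $\mathbb{E}\|u_n\|_{L_2(D_T)}^{2(m+1)/m}$, which by \cref{eq:the uniform estimates of u_n and a_n} with $p=2(m+1)/m$ are bounded uniformly precisely when $\mathbb{E}\|\xi\|_{L_2(D)}^{2(m+1)/m}<\infty$.
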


With the help of \cref{rem:L2=00003Dentropy}, \cref{prop:u_n star property},
and \cref{thm:the first part} (i), following almost the same
argument as \cite[Proposition 5.4]{dareiotis2020nonlinear}, we have
the existence and uniqueness of the $L_{2}$-solution $u_{n}$.
Here, we omit the proof.
\begin{prop}
\label{prop:L2 solution}Let $0\leq\xi\in L_{m+1}(\Omega,\mathcal{F}_{0};L_{m+1}(D))$
and Assumptions \ref{assu:=00005CPhi}, \ref{assu:Coefficients} and \ref{assu:zero condition} hold. Then, for each $n\in\mathbb{N}$,
$\Pi(\Phi_{n},\xi_{n})$ admits a unique $L_{2}$-solution $u_{n}$.
\end{prop}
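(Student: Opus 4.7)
The plan is to verify the two assertions of \cref{def:star property} for $w = u_n$ by exploiting the fact that $u_n$, as an $L_2$-solution, is a genuine $L_2(D)$-valued semimartingale satisfying the SPDE in strong form, with the extra regularity $u_n \in L_2(\Omega_T; H_0^1(D))$ and $\nabla u_n \in L_2(\Omega_T; L_2(D))$ from \cref{eq:u_n one order}. This allows direct use of It\^{o}'s formula, rather than routing through a kinetic/entropy reformulation.

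First I would apply \cref{lem:appro for Htheta} to write
$$
\mathbb{E}\int_{s,x} H_\theta(s,x,u_n(s,x)) = \lim_{\lambda\to 0^+} \mathbb{E}\int_{s,x,z} H_\theta(s,x,z)\,\rho_\lambda(u_n(s,x)-z).
$$
For fixed $(x,z)$ and sufficiently small $\theta$, $s\mapsto H_\theta(s,x,z)$ is smooth (\cref{rem:Htheta format}) and is realized as a stochastic integral in the auxiliary variable $t$ supported in $\{|s-t|<\theta\}$. Applying It\^{o}'s product rule in $s$ to $H_\theta(s,x,z)\rho_\lambda(u_n(s,x)-z)$, taking expectations and using stochastic Fubini, the bounded-variation contributions cancel and the expectation reduces to a quadratic covariation against the martingale part $-\partial_{x_j}\sigma^{jk}(x,u_n(s,x))\,dW^k(s)$ of $u_n(s,x)$ coming from the SPDE. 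Expanding this covariation via It\^{o}'s isometry pairs each of the three integrands defining $H_\theta$ with the martingale increment of $u_n$, producing nine cross terms; after $\lambda\to 0^+$ the factor $\rho_\lambda'(u_n-z)$ selects $h'(u_n-z)$ on the $z$-integration, and what remains are exactly the nine summands of $\mathcal{E}(u,u_n,\theta)$. The residual $O(\theta^{1-\mu})$ arises from the time mollifier $\rho_\theta(s-t)$ and is estimated via \cref{lem:partial H} and the $L_{m+1}$-control on $u_n$.

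The main obstacle is handling spatial boundary terms when swapping derivatives via the divergence theorem inside the stochastic Fubini step. For assertion (i), $g \in \Gamma_{B_i}^-$ satisfies $g(x,\cdot)\in C_c^\infty(D)$, which kills $y$-boundary contributions; for assertion (ii) the roles of $x$ and $y$ are interchanged after relabeling $x\leftrightarrow y$, and one uses $g(\cdot,y)\in C_c^\infty(D)$. The boundary terms in the complementary variable are controlled by the Dirichlet condition $u_n \in H_0^1(D)$ in the trace sense, together with \cref{assu:zero condition} which ensures $\sigma^k(\cdot,u_n)$ has the appropriate vanishing at $\partial D$. The restrictions $h\in \mathcal{C}^-$ (resp.\ $\mathcal{C}^+$) are exactly what guarantees that the antiderivatives $\int_0^{u_n} h(r-z)\,dr$ near $\partial D$, where $u_n\to 0$ and the support of $h$ lies on the ``safe'' side of $0$, produce no spurious boundary contribution.

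For the final statement on $n$-uniformity: inspection of the above shows that the constants $C$ depend only on $\mathbb{E}\|u_n\|_{L_{m+1}(D_T)}^{m+1}$ (from \cref{lem:partial H}) and on $L_2$-moments of $u_n$ and $\nabla\llbracket\mathfrak{a}_n\rrbracket(u_n)$ used when bounding the covariation uniformly in $\theta$ via H\"{o}lder's inequality. The first is uniformly bounded in $n$ by \cref{eq:the uniform estimates of u_n and Phi_n} and $\xi_n\le \xi\in L_{m+1}$. Controlling $\partial_z H_\theta$ in the H\"{o}lder pairing with the exponent complementary to $m+1$ and absorbing the factor $\theta^{-\lambda(m+1)}$ against the $\theta^{-\mu}$-type prefactor demands the moment $\mathbb{E}\|u_n\|_{L_2(D)}^{2(m+1)/m}$, which by \cref{eq:the uniform estimates of u_n and a_n} with $p=2(m+1)/m$ is bounded by $C(1+\mathbb{E}\|\xi\|_{L_2(D)}^{2(m+1)/m})$. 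Precisely under the extra hypothesis this is finite and independent of $n$, closing the argument.
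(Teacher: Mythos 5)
Your proposal does not prove the stated proposition. What you sketch --- verifying assertions (i) and (ii) of \cref{def:star property} for $w=u_n$ via It\^{o}'s formula, mollification in $x$ and $z$, and the $n$-uniformity of the constants under $\mathbb{E}\Vert\xi\Vert_{L_2(D)}^{2(m+1)/m}<\infty$ --- is the content of \cref{prop:u_n star property}, a different statement. \cref{prop:L2 solution} asserts existence and uniqueness of the $L_2$-solution $u_n$ of the non-degenerate problem $\Pi(\Phi_n,\xi_n)$, and your argument addresses neither part. Nothing in the proposal constructs a solution: existence for the uniformly parabolic approximate equation (recall $\mathfrak{a}_n\geq 2/n$) needs its own argument (e.g. a further approximation/Galerkin scheme combined with the a priori bounds \cref{eq:the uniform estimates of u_n and a_n}, \cref{eq:the uniform estimates of u_n and Phi_n} and \cref{eq:u_n one order}), which is precisely why the paper refers to the argument of \cite[Proposition~5.4]{dareiotis2020nonlinear}.

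Uniqueness also does not follow from the $(\star)$-property alone; you never close that loop. The paper's (omitted) route is: an $L_2$-solution is automatically an entropy solution of $\Pi(\Phi_n,\xi_n)$ by It\^{o}'s formula (\cref{rem:L2=00003Dentropy}), every $L_2$-solution has the $(\star)$-property (\cref{prop:u_n star property}), and then the $L_1$-estimate of \cref{thm:the first part}~(i), applied with $\Phi=\tilde{\Phi}=\Phi_n$ to two $L_2$-solutions with the same data, one of which has the $(\star)$-property, yields the $L_1$-contraction and hence uniqueness. If you intend your It\^{o}-formula computation as a proof of \cref{prop:u_n star property}, it is indeed close in spirit to the paper's proof of that proposition (including the role of $h\in\mathcal{C}^\pm$, the non-negativity of $u$, and the Dirichlet trace of $u_n$ in killing boundary terms); but as a proof of \cref{prop:L2 solution} it is missing both the existence construction and the chain \cref{rem:L2=00003Dentropy} plus \cref{thm:the first part}~(i) that converts the $(\star)$-property into uniqueness.
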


\begin{proof}[Proof of \cref{prop:u_n star property}]
We first prove \Cref{def:star property} (i). Fix $i\in\{0,1,\ldots,N\}$. 
For the sake of brevity, we define $B\coloneqq B_{i}$,
$\psi\coloneqq\psi_{i}$ and $\varrho_{\varepsilon}(x-y)\coloneqq\varrho_{\varepsilon,i}(x-y)$ which are introduced in
the definition of the spatial mollifier in \Cref{sec:-property-and--esitmate}.
 Fix sufficiently small $\gamma>0$. Since $y\mapsto\varrho_{\gamma}(x-y)\in C_{c}^{\infty}(D)$
for all $(x,\gamma)\in(B\cap\overline{D})\times(0,\bar{\varepsilon})$,
for a function $f\in L_{2}(D)$, let $f^{(\gamma)}(x)\coloneqq\int_{D}f(z)\varrho_{\gamma}(x-z)\mathrm{d}z$.
Then, on $B\cap\overline{D}$, $u_{n}^{(\gamma)}$ satisfies (pointwise) the
equation
\begin{align*}
\mathrm{d}u_{n}^{(\gamma)} & =\Big[\Delta(\Phi_{n}(u_{n}))^{(\gamma)}+\partial_{x_{i}}\big(a^{ij}(\cdot,u_{n})\partial_{x_{j}}u_{n}+b^{i}(\cdot,u_{n})+f^{i}(\cdot,u_{n})\big)^{(\gamma)}\\
 & \quad+\big(F(\cdot,u_{n})\big)^{(\gamma)}\Big]\mathrm{d}t+\partial_{x_{i}}\big(\sigma^{ik}(\cdot,u_{n})\big)^{(\gamma)}\mathrm{d}W^{k}(t).
\end{align*}
Note that
\[
\mathbb{E}\int_{s,x}H_{\theta}(s,x,u_{n}(s,x))=\lim_{\lambda\rightarrow0}\mathbb{E}\int_{s,x,z}H_{\theta}(s,x,z)\mathbf{1}_{B\cap\overline{D}}(x)\rho_{\lambda}(u_{n}(s,x)-z),
\]
and
\begin{align*}
 & \Big|\mathbb{E}\int_{s,x,z}H_{\theta}(s,x,z)\mathbf{1}_{B\cap \overline{D}}(x)\big(\rho_{\lambda}(u_{n}(s,x)-z)-\rho_{\lambda}(u_{n}^{(\gamma)}(s,x)-z)\big)\Big|\\
 & \leq C\Big(\mathbb{E}\Vert u_{n}-u_{n}^{(\gamma)}\Vert_{L_{1}([0,T]\times(B\cap\overline{D}))}^{2}\Big)^{\frac{1}{2}}\Big(\mathbb{E}\Vert\partial_{z}H_{\theta}\Vert_{L_{\infty}(D_{T}\times\mathbb{R})}^{2}\Big)^{\frac{1}{2}}\xrightarrow{\gamma\rightarrow0}0.
\end{align*}
With \cref{rem:Htheta format}, we have $\mathbb{E}H_{\theta}(s,x,z)X=0$
for any $\mathcal{F}_{s-\theta}$-measurable bounded random variable
$X$. Then,
\begin{align*}
 & \mathbb{E}\int_{s,x,z}H_{\theta}(s,x,z)\mathbf{1}_{B\cap \overline{D}}(x)\rho_{\lambda}(u_{n}^{(\gamma)}(s,x)-z)\\
 & =\mathbb{E}\int_{s,x,z}H_{\theta}(s,x,z)\mathbf{1}_{B\cap \overline{D}}(x)\Big(\rho_{\lambda}(u_{n}^{(\gamma)}(s,x)-z)-\rho_{\lambda}(u_{n}^{(\gamma)}(s-\theta,x)-z)\Big).
\end{align*}
Using It\^{o}'s formula, we have
\begin{align*}
 & \int_{s,x,z}H_{\theta}(s,x,z)\Big(\rho_{\lambda}(u_{n}^{(\gamma)}(s,x)-z)-\rho_{\lambda}(u_{n}^{(\gamma)}(s-\theta,x)-z)\Big)=\sum_{i=1}^{5}N_{\lambda,\gamma}^{(i)},
\end{align*}
where
\begin{align*}
N_{\lambda,\gamma}^{(1)}&\coloneqq\int_{s,x,z}H_{\theta}(s,x,z)\int_{s-\theta}^{s}\rho_{\lambda}^{\prime}(u_{n}^{(\gamma)}(t,x)-z)\Delta_{x}(\Phi_{n}(u_{n}))^{(\gamma)}\mathrm{d}t,
\\
N_{\lambda,\gamma}^{(2)}&\coloneqq\int_{s,x,z}H_{\theta}(s,x,z)\int_{s-\theta}^{s}\rho_{\lambda}^{\prime}(u_{n}^{(\gamma)}(t,x)-z)\partial_{x_{i}}\big(a^{ij}(\cdot,u_{n})\partial_{x_{j}}u_{n}+b^{i}(\cdot,u_{n})\big)^{(\gamma)}\mathrm{d}t,
\\
N_{\lambda,\gamma}^{(3)}&\coloneqq\int_{s,x,z}H_{\theta}(s,x,z)\int_{s-\theta}^{s}\rho_{\lambda}^{\prime}(u_{n}^{(\gamma)}(t,x)-z)\big(\partial_{x_{i}}f^{i}(\cdot,u_{n})+F(\cdot,u_{n})\big)^{(\gamma)}\mathrm{d}t,
\\
N_{\lambda,\gamma}^{(4)}&\coloneqq\int_{s,x,z}H_{\theta}(s,x,z)\int_{s-\theta}^{s}\rho_{\lambda}^{\prime}(u_{n}^{(\gamma)}(t,x)-z)\partial_{x_{i}}\big(\sigma^{ik}(\cdot,u_{n})\big)^{(\gamma)}\mathrm{d}W^{k}(t),
\\
N_{\lambda,\gamma}^{(5)}&\coloneqq\frac{1}{2}\int_{s,x,z}H_{\theta}(s,x,z)\int_{s-\theta}^{s}\rho_{\lambda}^{\prime\prime}(u_{n}^{(\gamma)}(t,x)-z)\sum_{k=1}^{\infty}|\partial_{x_{i}}\big(\sigma^{ik}(\cdot,u_{n})\big)^{(\gamma)}|^{2}\mathrm{d}t.
\end{align*}
Using the divergence theorem in $x$, we have $N_{\lambda,\gamma}^{(1)}=\sum_{i=1}^{3}N_{\lambda,\gamma}^{(1,i)}$,
where
\begin{align*}
N_{\lambda,\gamma}^{(1,1)}&\coloneqq-\int_{s,x,z}\nabla_{x}H_{\theta}(s,x,z)\int_{s-\theta}^{s}\rho_{\lambda}^{\prime}(u_{n}^{(\gamma)}(t,x)-z)\nabla_{x}(\Phi_{n}(u_{n}))^{(\gamma)}\mathrm{d}t,
\\
N_{\lambda,\gamma}^{(1,2)}&\coloneqq-\int_{s,x,z}H_{\theta}(s,x,z)\int_{s-\theta}^{s}\rho_{\lambda}^{\prime\prime}(u_{n}^{(\gamma)}(t,x)-z)\nabla_{x}u_{n}^{(\gamma)}(t,x)\nabla_{x}(\Phi_{n}(u_{n}))^{(\gamma)}\mathrm{d}t,
\\
N_{\lambda,\gamma}^{(1,3)}&\coloneqq\int_{s,z}\int_{\partial D}H_{\theta}(s,x,z)\int_{s-\theta}^{s}\rho_{\lambda}^{\prime}(u_{n}^{(\gamma)}(t,x)-z)\nabla_{x}(\Phi_{n}(u_{n}))^{(\gamma)}\cdot\nu\mathrm{d}t\mathrm{d}S,
\end{align*}
and $\nu(x)$ is the unit normal vector of $\partial D$ at $x$. Using
integration by parts in $z$ and \cref{eq:the uniform estimates of u_n and Phi_n},
we have
\begin{align*}
\mathbb{E}|N_{\lambda,\gamma}^{(1,1)}| & =\mathbb{E}\Big|\int_{s,x,z}\mathbf{1}_{\{s>\theta\}}\nabla_{x}\partial_{z}H_{\theta}(s,x,z)\int_{s-\theta}^{s}\rho_{\lambda}(u_{n}^{(\gamma)}(t,x)-z)\nabla_{x}(\Phi_{n}(u_{n}))^{(\gamma)}\mathrm{d}t\Big|\\
 & \leq C\theta(\mathbb{E}\Vert\nabla_{x}\partial_{z}H_{\theta}\Vert_{L_{\infty}(D_{T}\times\mathbb{R})}^{2})^{\frac{1}{2}}(\mathbb{E}\Vert\nabla_{x}(\Phi_{n}(u_{n}))^{(\gamma)}\Vert_{L_{1}(D_{T})}^{2})^{\frac{1}{2}}\leq C\theta^{1-\mu}.
\end{align*}
Similarly, integrating by parts twice in $z$ on $N_{\lambda,\gamma}^{(1,2)}$,
we have
\begin{align*}
\lim_{\gamma\rightarrow0}\mathbb{E}|N_{\lambda,\gamma}^{(1,2)}| & \leq C\theta^{1-\mu}\lim_{\gamma\rightarrow0}(\mathbb{E}\Vert\nabla_{x}u_{n}^{(\gamma)}(t,x)\nabla_{x}(\Phi_{n}(u_{n}))^{(\gamma)}\Vert_{L_{1}(D_{T})}^{\frac{m+1}{m}})^{\frac{m}{m+1}}\\
 & \leq C\theta^{1-\mu}(\mathbb{E}\Vert\nabla_{x}u_{n}\nabla_{x}(\Phi_{n}(u_{n}))\Vert_{L_{1}(D_{T})}^{\frac{m+1}{m}})^{\frac{m}{m+1}}.\\
 & =C\theta^{1-\mu}(\mathbb{E}\Vert\nabla\llbracket\mathfrak{a}_{n}\rrbracket(u_{n})\Vert_{L_{2}(D_{T})}^{\frac{2(m+1)}{m}})^{\frac{m}{m+1}}\leq C(n)\theta^{1-\mu}.
\end{align*}
From the Dirichlet boundary condition of $u_{n}$, we have
\begin{align*}
\lim_{\gamma\rightarrow0}N_{\lambda,\gamma}^{(1,3)} & =\int_{s,z}\int_{\partial D}\mathbf{1}_{\{s>\theta\}}H_{\theta}(s,x,z)\int_{s-\theta}^{s}\rho_{\lambda}^{\prime}(0-z)\nabla_{x}(\Phi_{n}(u_{n}))^{(\gamma)}\cdot\nu\mathrm{d}t\mathrm{d}S\\
 & =\int_{s,z}\mathbf{1}_{\{z<0\}}\int_{\partial D}\mathbf{1}_{\{s>\theta\}}H_{\theta}(s,x,z)\int_{s-\theta}^{s}\rho_{\lambda}^{\prime}(0-z)\nabla_{x}(\Phi_{n}(u_{n}))^{(\gamma)}\cdot\nu\mathrm{d}t\mathrm{d}S.
\end{align*}
Since $\text{supp}\,\rho_{\lambda}^{\prime}\subset[0,\infty)$, the
integrand only acts when $z\in(-\infty,0]$. However, based on the non-negativity
of $u$ and the definition of $\mathcal{C}^{-}$, we have for all
$(h,z)\in\mathcal{C}^{-}\times(-\infty,0]$,
\[
h(u(t,y)-z)=0,\quad\text{a.s.}\ (t,\omega,y)\in\Omega_{T}\times D.
\]
which indicates $\lim_{\gamma\rightarrow0}N_{\lambda,\gamma}^{(1,3)}=0$.
Therefore,
\[
\underset{\gamma\rightarrow0}{\lim\sup}\mathbb{E}|N_{\lambda,\gamma}^{(1)}|\leq C(n)\theta^{1-\mu}.
\]
Now we estimate $N_{\lambda,\gamma}^{(2)}+N_{\lambda,\gamma}^{(5)}$.
As the estimate of $N_{\lambda,\gamma}^{(1,3)}$, using the divergence
theorem in $x$ and combining with the definition of $a^{ij}$ and
$b^{i}$, we have 
\begin{align*}
 & \underset{\gamma\rightarrow0}{\lim\sup}\,\mathbb{E}|N_{\lambda,\gamma}^{(2)}+N_{\lambda,\gamma}^{(5)}|\\
 & \leq\mathbb{E}\Big|\int_{s,x,z}\partial_{x_{i}}H_{\theta}(s,x,z)\int_{s-\theta}^{s}\rho_{\lambda}^{\prime}(u_{n}(t,x)-z)\big(a^{ij}(\cdot,u_{n})\partial_{x_{j}}u_{n}\big)\mathrm{d}t\Big|\\
 & \quad+\mathbb{E}\Big|\int_{s,x,z}\partial_{x_{i}}H_{\theta}(s,x,z)\int_{s-\theta}^{s}\rho_{\lambda}^{\prime}(u_{n}(t,x)-z)\big(b^{i}(\cdot,u_{n})\big)\mathrm{d}t\Big|\\
 & \quad+\mathbb{E}\Big|\frac{1}{2}\int_{s,x,z}H_{\theta}(s,x,z)\int_{s-\theta}^{s}\rho_{\lambda}^{\prime\prime}(u_{n}(t,x)-z)\sum_{k=1}^{\infty}|\big(\sigma_{x_{i}}^{ik}(\cdot,u_{n})\big)|^{2}\mathrm{d}t\Big|.
\end{align*}
Using the identity
\begin{align*}
\rho_{\lambda}^{\prime}(u_{n}(t,x)-z)a^{ij}(x,u_{n})\partial_{x_{j}}u_{n} & =\partial_{x_{j}}\llbracket a^{ij}\rho_{\lambda}^{\prime}(\cdot-z)\rrbracket(x,u_{n}(t,x))\\
 & \quad-\llbracket a_{x_{j}}^{ij}\rho_{\lambda}^{\prime}(\cdot-z)\rrbracket(x,u_{n}(t,x)),
\end{align*}
and 
\begin{align*}
 & \mathbb{E}\Big|\int_{s,x,z}\partial_{x_{i}}H_{\theta}(s,x,z)\int_{s-\theta}^{s}\partial_{x_{j}}\llbracket a^{ij}\rho_{\lambda}^{\prime}(\cdot-z)\rrbracket(x,u_{n}(t,x))\mathrm{d}t\Big|\\
 & \leq C\theta(\mathbb{E}\Vert\partial_{z}\partial_{x_{i}}\partial_{x_{j}}H_{\theta}\Vert_{L_{\infty}(D_{T}\times\mathbb{R})}^{2})^{\frac{1}{2}}(\mathbb{E}\Vert\llbracket a^{ij}\rho_{\lambda}(\cdot-z)\rrbracket(\cdot,u_{n})\Vert_{L_{1}(D_{T})}^{2})^{\frac{1}{2}},
\end{align*}
with the linear growth of $\sigma_{x_{i}}^{i}$, $b^{i}$ and and
the boundness of $a_{x_{j}}^{ij}$ and $a^{ij}$ derived from \cref{eq:sigma x}-\cref{eq:sigma r}
and \cref{eq:f r} in \cref{assu:Coefficients}, we have
\[
\underset{\gamma\rightarrow0}{\lim\sup}\mathbb{E}|N_{\lambda,\gamma}^{(2)}+N_{\lambda,\gamma}^{(5)}|\leq C\theta^{1-\mu}(1+\mathbb{E}\Vert u_{n}\Vert_{L_{2}(D_{T})}^{\frac{2(m+1)}{m}})^{\frac{m}{m+1}}\leq C(n)\theta^{1-\mu}.
\]
For $N_{\lambda,\gamma}^{(3)}$, with the linear growth of $f_{x_{i}}^{i}$,
$F$ and the boundness of $f_{r}^{i}$ and $f_{rx_{i}}^{i}$ derived
from \cref{eq:f r}-\cref{eq:F}
in \cref{assu:Coefficients}, we similarly obtain
\[
\underset{\gamma\rightarrow0}{\lim\sup}\mathbb{E}|N_{\lambda,\gamma}^{(3)}|\leq C\theta^{1-\mu}(1+\mathbb{E}\Vert u_{n}\Vert_{L_{2}(D_{T})}^{2})^{2}\leq C(n)\theta^{1-\mu}.
\]
Using It\^{o}'s isometry and taking $\gamma\rightarrow0$, since
\begin{align}
\quad & \int_{s,x,z,y}\int_{s-\theta}^{s}\int_{0}^{z}\sigma_{ry_{i}}^{ik}(y,r)h(r-z)\mathrm{d}r\phi_{\theta}\rho_{\lambda}^{\prime}(u_{n}(t,x)-z)\partial_{x_{j}}\sigma^{jk}(x,u_{n}(t,x))\mathrm{d}t\label{eq:divergence}\\
\quad & =-\int_{s,x,z,y}\int_{s-\theta}^{s}\int_{0}^{z}\sigma_{r}^{ik}(y,r)h(r-z)\mathrm{d}r\partial_{y_{i}}\phi_{\theta}\rho_{\lambda}^{\prime}(u_{n}(t,x)-z)\partial_{x_{j}}\sigma^{jk}(x,u_{n}(t,x))\mathrm{d}t,\nonumber 
\end{align}
we have
\[
\lim_{\gamma\rightarrow0}\mathbb{E}N_{\lambda,\gamma}^{(4)}=\sum_{i=1}^{6}I_{i},
\]
where
\begin{align*}
I_{1}&\coloneqq-\int_{s,x,z,y}\int_{s-\theta}^{s}\Big(\int_{z}^{u(t,y)}\sigma_{ry_{i}}^{ik}(y,r)h(r-z)\mathrm{d}r\phi_{\theta}\rho_{\lambda}^{\prime}(u_{n}(t,x)-z)\\
&\quad\cdot\sigma_{x_{j}}^{jk}(x,u_{n}(t,x))\Big)\mathrm{d}t,
\\
I_{2}&\coloneqq-\int_{s,x,z,y}\int_{s-\theta}^{s}\Big(\int_{z}^{u(t,y)}\sigma_{ry_{i}}^{ik}(y,r)h(r-z)\mathrm{d}r\phi_{\theta}\rho_{\lambda}^{\prime}(u_{n}(t,x)-z)\\
&\quad\cdot\sigma_{r}^{jk}(x,u_{n}(t,x))\partial_{x_{j}}u_{n}(t,x)\Big)\mathrm{d}t,
\\
I_{3}&\coloneqq-\int_{s,x,z,y}\int_{s-\theta}^{s}\Big(\int_{z}^{u(t,y)}\sigma_{r}^{ik}(y,r)h(r-z)\mathrm{d}r\partial_{y_{i}}\phi_{\theta}\rho_{\lambda}^{\prime}(u_{n}(t,x)-z)\\
&\quad\cdot\sigma_{x_{j}}^{jk}(x,u_{n}(t,x))\Big)\mathrm{d}t,
\\
I_{4}&\coloneqq-\int_{s,x,z,y}\int_{s-\theta}^{s}\Big(\int_{z}^{u(t,y)}\sigma_{r}^{ik}(y,r)h(r-z)\mathrm{d}r\partial_{y_{i}}\phi_{\theta}\rho_{\lambda}^{\prime}(u_{n}(t,x)-z)\\
&\quad\cdot\sigma_{r}^{jk}(x,u_{n}(t,x))\partial_{x_{j}}u_{n}(t,x)\Big)\mathrm{d}t,
\\
I_{5}&\coloneqq\int_{s,x,z,y}\int_{s-\theta}^{s}h(u(t,y)-z)\phi_{\theta}\sigma_{y_{i}}^{ik}(y,u(t,y))\rho_{\lambda}^{\prime}(u_{n}(t,x)-z)\sigma_{x_{j}}^{jk}(x,u_{n}(t,x))\mathrm{d}t,
\\
I_{6}&\coloneqq\int_{s,x,z,y}\int_{s-\theta}^{s}\Big(h(u(t,y)-z)\phi_{\theta}\sigma_{y_{i}}^{ik}(y,u(t,y))\rho_{\lambda}^{\prime}(u_{n}(t,x)-z)\\
&\quad\cdot\sigma_{r}^{jk}(x,u_{n}(t,x))\partial_{x_{j}}u_{n}(t,x)\Big)\mathrm{d}t.
\end{align*}
For $I_{2}+I_{4}$, notice that
\begin{align*}
\rho_{\lambda}^{\prime}(u_{n}(t,x)-z)\sigma_{r}^{jk}(x,u_{n}(t,x))\partial_{x_{j}}u_{n}(t,x) & =\partial_{x_{j}}\int_{0}^{u_{n}(t,x)}\rho_{\lambda}^{\prime}(\tilde{r}-z)\sigma_{r}^{jk}(x,\tilde{r})\mathrm{d}\tilde{r}\\
 & \quad-\int_{0}^{u_{n}(t,x)}\rho_{\lambda}^{\prime}(\tilde{r}-z)\sigma_{rx_{j}}^{jk}(x,\tilde{r})\mathrm{d}\tilde{r}.
\end{align*}
We can apply the divergence theorem in $x$ and the Dirichlet boundary
condition and integrate by parts in $z$. Moreover, Since $h\in\mathcal{C}^{-}$,
the integrand is non-zero only when $u(t,y)<z$. Using $\text{supp}\,\rho_{\lambda}\subset\mathbb{R}_{+}$
and the non-negativity of $u$, we have
\begin{align}
 & I_{2}+I_{4}\label{eq:instead D2 D4}\\
 & =-\int_{s,x,z,y}\int_{s-\theta}^{s}\int_{z}^{u(t,y)}\sigma_{ry_{i}}^{ik}(y,r)h^{\prime}(r-z)\mathrm{d}r\partial_{x_{j}}\phi_{\theta}\int_{u(t,y)}^{u_{n}(t,x)}\rho_{\lambda}(\tilde{r}-z)\sigma_{r}^{jk}(x,\tilde{r})\mathrm{d}\tilde{r}\mathrm{d}t\nonumber \\
 & \quad-\int_{s,x,z,y}\int_{s-\theta}^{s}\int_{z}^{u(t,y)}\sigma_{ry_{i}}^{ik}(y,r)h^{\prime}(r-z)\mathrm{d}r\phi_{\theta}\int_{u(t,y)}^{u_{n}(t,x)}\rho_{\lambda}(\tilde{r}-z)\sigma_{rx_{j}}^{jk}(x,\tilde{r})\mathrm{d}\tilde{r}\mathrm{d}t\nonumber \\
 & \quad-\int_{s,x,z,y}\int_{s-\theta}^{s}\int_{z}^{u(t,y)}\sigma_{r}^{ik}(y,r)h^{\prime}(r-z)\mathrm{d}r\partial_{y_{i}x_{j}}\phi_{\theta}\int_{u(t,y)}^{u_{n}(t,x)}\rho_{\lambda}(\tilde{r}-z)\sigma_{r}^{jk}(x,\tilde{r})\mathrm{d}\tilde{r}\mathrm{d}t\nonumber \\
 & \quad-\int_{s,x,z,y}\int_{s-\theta}^{s}\int_{z}^{u(t,y)}\sigma_{r}^{ik}(y,r)h^{\prime}(r-z)\mathrm{d}r\partial_{y_{i}}\phi_{\theta}\int_{u(t,y)}^{u_{n}(t,x)}\rho_{\lambda}(\tilde{r}-z)\sigma_{rx_{j}}^{jk}(x,\tilde{r})\mathrm{d}\tilde{r}\mathrm{d}t.\nonumber 
\end{align}
Similarly, we have
\begin{align*}
I_{6} & =-\int_{s,x,z,y}\int_{s-\theta}^{s}h^{\prime}(u(t,y)-z)\phi_{\theta}\sigma_{y_{i}}^{ik}(y,u(t,y))\partial_{x_{j}}\int_{0}^{u_{n}(t,x)}\rho_{\lambda}(\tilde{r}-z)\sigma_{r}^{jk}(x,\tilde{r})\mathrm{d}\tilde{r}\mathrm{d}t\\
 & \quad+\int_{s,x,z,y}\int_{s-\theta}^{s}h^{\prime}(u(t,y)-z)\phi_{\theta}\sigma_{y_{i}}^{ik}(y,u(t,y))\int_{0}^{u_{n}(t,x)}\rho_{\lambda}(\tilde{r}-z)\sigma_{rx_{j}}^{jk}(x,\tilde{r})\mathrm{d}\tilde{r}\mathrm{d}t\\
 & =\int_{s,x,z,y}\int_{s-\theta}^{s}h^{\prime}(u(t,y)-z)\partial_{x_{j}}\phi_{\theta}\sigma_{y_{i}}^{ik}(y,u(t,y))\int_{u(t,y)}^{u_{n}(t,x)}\rho_{\lambda}(\tilde{r}-z)\sigma_{r}^{jk}(x,\tilde{r})\mathrm{d}\tilde{r}\mathrm{d}t\\
 & \quad+\int_{s,x,z,y}\int_{s-\theta}^{s}h^{\prime}(u(t,y)-z)\phi_{\theta}\sigma_{y_{i}}^{ik}(y,u(t,y))\int_{u(t,y)}^{u_{n}(t,x)}\rho_{\lambda}(\tilde{r}-z)\sigma_{rx_{j}}^{jk}(x,\tilde{r})\mathrm{d}\tilde{r}\mathrm{d}t.
\end{align*}
For $I_{1}$, $I_{3}$ and $I_{5}$, we integrate by parts in $z$.
Therefore, we have
\[
\lim_{\lambda\rightarrow0}\lim_{\gamma\rightarrow0}\mathbb{E}N_{\lambda,\gamma}^{(4)}=\mathcal{E}(u,u_{n},\theta).
\]
Hence,
\[
\mathbb{E}\int_{s,x}H_{\theta}(s,x,u_{n}(s,x))\leq C(n)\theta^{1-\mu}+\mathcal{E}(u,u_{n},\theta).
\]
Furthermore, inspired by \cref{eq:the uniform estimates of u_n and a_n},
if $\mathbb{E}\Vert\xi\Vert_{L_{2}(D)}^{\frac{2(m+1)}{m}}<\infty$,
we can choose $C$ independent of $n$.

To prove (ii) in \Cref{def:star property}, note that for all $g\in\Gamma_{B}^{+}$, we have $g(\cdot,y)\in C_{c}(D)$
for all $y\in B\cap\overline{D}$. It is easy to prove following the
proceeding proof. The reason is that the boundary terms vanish using
the divergence theorem in $x$, and other differences in the proof
are \cref{eq:divergence}, $I_{2}+I_{4}$ and $I_{6}$. For \cref{eq:divergence},
with $h\in\mathcal{C}^{+}$, we have for all $(y,z)\in D\times[0,\infty),$
\[
\int_{0}^{z}\sigma_{ry_{i}}^{ik}(y,r)h(r-z)\mathrm{d}r=\int_{0}^{z}\sigma_{r}^{ik}(y,r)h(r-z)\mathrm{d}r=0.
\]
For $z<0$, using \cref{eq:sigma x} in \cref{assu:Coefficients},
\cref{eq:the uniform estimates of u_n and a_n}, \cref{rem:L2=00003Dentropy}
and the definition of $\mathcal{C}^{+}$, the boundary term arising
in the divergence theorem in $y$ will disappear when $\lambda\rightarrow0$.
Therefore, we have \cref{eq:divergence}. 

As for $I_{2}+I_{4}$ and $I_{6}$, we can apply
\[
\partial_{x_{j}}\int_{0}^{u(t,y)}\rho_{\lambda}(\tilde{r}-z)\sigma_{r}^{jk}(x,\tilde{r})\mathrm{d}\tilde{r}\mathrm{d}t-\int_{0}^{u(t,y)}\rho_{\lambda}(\tilde{r}-z)\sigma_{rx_{j}}^{jk}(x,\tilde{r})\mathrm{d}\tilde{r}\mathrm{d}t=0
\]
instead of using the support of $h$. 
Therefore, this proposition is proved.
\end{proof}

\section{\label{sec:Existence-and-Uniqueness}Existence and Uniqueness}
Now, we give the proof of our main theorem.
\begin{proof}[Proof of \cref{thm:main theorem}]
In this part, we will use the $L_{1}$-estimates to prove that $\{u_{n}\}_{n\in\mathbb{N}}$
constructed in \Cref{sec:Approximation} is a Cauchy sequence. 

For $n,n^{\prime}\geq1$, let $u_{n}$ and $u_{n^{\prime}}$ be the
$L_{2}$-solutions of $\Pi(\Phi_{n},\xi_{n})$ and $\Pi(\Phi_{n^{\prime}},\xi_{n^{\prime}})$,
respectively. \Cref{rem:L2=00003Dentropy} shows that $u_{n}$
and $u_{n^{\prime}}$ are also entropy solutions. \Cref{prop:u_n star property}
indicates that $\{u_{n}\}_{n\in\mathbb{N}}$ has the $(\star)$-property. Without loss of generality, we assume that
$n\leq n^{\prime}$. Since $\beta\in((2\bar{\kappa})^{-1},1]$, we
can choose $\vartheta\in((m\land2)^{-1}\vee(2\beta)^{-1},\bar{\kappa})$
and $\alpha\in((2\vartheta)^{-1},1\land(m/2))$. Let $\delta=\varepsilon^{2\vartheta}$
and $\lambda=8/n$. Using \cref{eq:the approximation of Phi}, we
have $R_{\lambda}\geq n$. Applying \cref{thm:the first part}
with $u_{n}$ and $u_{n^{\prime}}$ and using \cref{eq:the uniform estimates of u_n and Phi_n}
and the triangle inequality 
\begin{align*}
 & \mathbb{E}\Vert\xi_{n}(\cdot)-\bar{\xi}_{n}(\cdot+h)\Vert_{L_{1}(D)}\\
 & \leq\mathbb{E}\Vert\xi(\cdot)-\bar{\xi}(\cdot+h)\Vert_{L_{1}(D)}+2\mathbb{E}\Vert\xi-\xi_{n}\Vert_{L_{1}(D)},\quad\forall n\in\mathbb{N},
\end{align*}
we have
\begin{align*}
 & \mathbb{E}\int_{0}^{T}\int_{D}|u_{n^{\prime}}(\tau,x)-u_{n}(\tau,x)|\mathrm{d}x\mathrm{d}\tau\\
 & \leq M(\varepsilon)+C\mathbb{E}\Vert\xi-\xi_{n^{\prime}}\Vert_{L_{1}(D)}+C\mathbb{E}\Vert\xi-\xi_{n}\Vert_{L_{1}(D)}+C\varepsilon^{-2}n^{-2}+C\varepsilon^{-1}n^{-1}\\
 & \quad+C\varepsilon^{-2}\mathbb{E}\Big(\big\Vert\mathbf{1}_{|u_{n}|\geq n}(1+u_{n})\big\Vert_{L_{m}(D_{T})}^{m}+\big\Vert\mathbf{1}_{|u_{n^{\prime}}|\geq n}(1+u_{n^{\prime}})\big\Vert_{L_{m}(D_{T})}^{m}\Big),
\end{align*}
where $M(\varepsilon)\rightarrow0$ as $\varepsilon\rightarrow0^+$.
For any $\varepsilon_{0}>0$, we select sufficiently small $\varepsilon\in(0,\bar{\varepsilon})$
 such that $M(\varepsilon)\leq\varepsilon_{0}$. Then,
using \cref{eq:the uniform estimates of u_n and Phi_n},
we can choose $n_{0}$ sufficiently large so that for $n_{0}\leq n\leq n^{\prime}$,
we have
\begin{align*}
 & C\mathbb{E}\Vert\xi_{n^{\prime}}-\xi\Vert_{L_{1}(D)}+C\mathbb{E}\Vert\xi-\xi_{n}\Vert_{L_{1}(D)}+C\varepsilon^{-2}n^{-2}+C\varepsilon^{-1}n^{-1}\\
 & +C\varepsilon^{-2}\mathbb{E}\Big(\big\Vert\mathbf{1}_{|u_{n}|\geq n}(1+u_{n})\big\Vert_{L_{m}(D_{T})}^{m}+\big\Vert\mathbf{1}_{|u_{n^{\prime}}|\geq n}(1+u_{n^{\prime}})\big\Vert_{L_{m}(D_{T})}^{m}\Big)\leq\varepsilon_{0}.
\end{align*}
Therefore, we have
\[
\lim_{n,n^{\prime}\rightarrow\infty}\Vert u_{n^{\prime}}(t,x)-u_{n}(t,x)\Vert_{L_1(\Omega_T\times D)}=0.
\]
Moreover, by taking a subsequence, we may assume
\begin{equation}
\lim_{n\rightarrow\infty}u_{n}=u,\quad\mbox{a.s.}\ (\omega,t,x)\in\Omega_{T}\times D.\label{eq:converge almost all}
\end{equation}
In addition, the sequence $\{|u_{n}(t,x)|^{q}\}_{n\in\mathbb{N}}$
is uniformly integrable on $\Omega_{T}\times D$ for all $q\in(0,m+1)$.
Now, we verify that $u$ is an entropy solution to $\Pi(\Phi,\xi)$
under \cref{def:entropy-solution}.

Firstly, with the definition of $\xi_{n}$, we have that $\{u_{n}\}_{n\in\mathbb{N}}$
is weak convergence in the Banach space $L_{m+1}(\Omega_{T};L_{m+1}(D))$.
Applying the Banach-Saks Theorem, taking a subsequence and using \cref{eq:the uniform estimates of u_n and Phi_n}, we have
\begin{align*}
\mathbb{E}\Vert u\Vert_{L_{m+1}(\Omega_{T};L_{m+1}(D))} & \leq\liminf_{n\rightarrow\infty}\Vert u_{n}\Vert_{L_{m+1}(\Omega_{T};L_{m+1}(D))}\\
 & \leq C(1+\liminf_{n\rightarrow\infty}\Vert\xi_{n}\Vert_{L_{m+1}(\Omega;L_{m+1}(D))})\\
 & \leq C(1+\Vert\xi\Vert_{L_{m+1}(\Omega;L_{m+1}(D))}).
\end{align*}
To prove \cref{def:entropy-solution} (ii) of $u$, let
$f\in C_{b}(\mathbb{R})$. From \cref{assu:=00005CPhi}
and \cref{eq:the uniform estimates of u_n and Phi_n}, we
have 
\begin{align*}
\sup_{n}\mathbb{E}\int_{0}^{T}\int_{D}|\llbracket\mathfrak{a}_{n}f\rrbracket(u_{n})|^{2}\mathrm{d}x\mathrm{d}t & \lesssim\sup_{n}\mathbb{E}\int_{0}^{T}\int_{D}(|u_{n}|+|u_{n}|^{\frac{m+1}{2}})^{2}\mathrm{d}x\mathrm{d}t<\infty.
\end{align*}
Combining \cref{eq:the uniform estimates of u_n and a_n}
with the fact that $\llbracket\mathfrak{a}_{n}f\rrbracket(u_{n})\in L_{2}(\Omega_{T};H_{0}^{1}(D))$,
we have
\[
\sup_{n}\mathbb{E}\int_{t}\Vert\llbracket\mathfrak{a}_{n}f\rrbracket(u_{n})\Vert_{H^{1}(D)}^{2}<\infty.
\]
With the pointwise convergence and uniform integrability of $u_{n}$
and \cref{prop:Phi_n}, by taking a subsequence, we obtain
the weak convergence of $\llbracket\mathfrak{a}_{n}f\rrbracket(u_{n})$
and $\llbracket\mathfrak{a}_{n}\rrbracket(u_{n})$ in $L_{2}(\Omega_{T};H_{0}^{1}(D))$
as $n\rightarrow\infty$, and the limits are $\llbracket\mathfrak{a}f\rrbracket(u)$
and $\llbracket\mathfrak{a}\rrbracket(u)$, respectively. On the other
hand, for all $\phi\in C_{c}^{\infty}([0,T)\times D)$ and $A\in\mathcal{F}$,
based on the strong convergence of $f(u_{n})\phi$ and weak convergence
of $\llbracket\mathfrak{a}_{n}\rrbracket(u_{n})$, we have
\begin{align*}
\mathbb{E}\bigg[\mathbf{1}_{A}\int_{t,x}\partial_{x_{i}}\llbracket\mathfrak{a}f\rrbracket(u)\phi\bigg] & =\lim_{n\rightarrow\infty}\mathbb{E}\bigg[\mathbf{1}_{A}\int_{t,x}\partial_{x_{i}}\llbracket\mathfrak{a}_{n}f\rrbracket(u_{n})\phi\bigg]\\
 & =\lim_{n\rightarrow\infty}\mathbb{E}\bigg[\mathbf{1}_{A}\int_{t,x}f(u_{n})\partial_{x_{i}}\llbracket\mathfrak{a}_{n}\rrbracket(u_{n})\phi\bigg]\\
 & =\mathbb{E}\bigg[\mathbf{1}_{A}\int_{t,x}f(u)\partial_{x_{i}}\llbracket\mathfrak{a}\rrbracket(u)\phi\bigg].
\end{align*}

For \cref{def:entropy-solution} (iii), let $A\in\mathcal{F}$.
Combining \cref{rem:L2=00003Dentropy} with It\^{o}'s product
rule, we have
\begin{align}
 & -\mathbb{E}\bigg[\mathbf{1}_{A}\int_{0}^{T}\int_{D}\eta(u_{n})\partial_{t}\phi\mathrm{d}x\mathrm{d}t\bigg]\label{eq:ito entorpy equality}\\
 & =\mathbb{E}\mathbf{1}_{A}\Big[\int_{D}\eta(\xi_{n})\phi(0)\mathrm{d}x+\int_{0}^{T}\int_{D}\big(\llbracket\mathfrak{a}_{n}^{2}\eta^{\prime}\rrbracket(u_{n})\Delta\phi+\llbracket a^{ij}\eta^{\prime}\rrbracket(x,u_{n})\partial_{x_{i}x_{j}}\phi\big)\mathrm{d}x\mathrm{d}t\nonumber \\
 & \quad+\int_{0}^{T}\int_{D}\big(\llbracket a_{x_{j}}^{ij}\eta^{\prime}-f_{r}^{i}\eta^{\prime}\rrbracket(x,u_{n})-\eta^{\prime}(u_{n})b^{i}(x,u_{n})\big)\partial_{x_{i}}\phi\mathrm{d}x\mathrm{d}t\nonumber \\
 & \quad+\int_{0}^{T}\int_{D}\big(-\llbracket f_{rx_{i}}^{i}\eta^{\prime}\rrbracket(x,u_{n})+\eta^{\prime}(u_{n})f_{x_{i}}^{i}(x,u_{n})+\eta^{\prime}(u_{n})F(x,u_{n})\big)\phi\mathrm{d}x\mathrm{d}t\nonumber \\
 & \quad+\int_{0}^{T}\int_{D}\Big(\frac{1}{2}\eta^{\prime\prime}(u_{n})\sum_{k=1}^{\infty}|\sigma_{x_{i}}^{ik}(x,u_{n})|^{2}\phi-\eta^{\prime\prime}(u_{n})|\nabla\llbracket\mathfrak{a}_{n}\rrbracket(u_{n})|^{2}\phi\Big)\mathrm{d}x\mathrm{d}t\nonumber \\
 & \quad+\int_{0}^{T}\int_{D}\Big(\eta^{\prime}(u_{n})\phi\sigma_{x_{i}}^{ik}(x,u_{n})-\llbracket\sigma_{rx_{i}}^{ik}\eta^{\prime}\rrbracket(x,u_{n})\phi-\llbracket\sigma_{r}^{ik}\eta^{\prime}\rrbracket(x,u_{n})\partial_{x_{i}}\phi\Big)\mathrm{d}x\mathrm{d}W^{k}(t)\Big].\nonumber 
\end{align}
Since $(\eta^{\prime\prime})^{1/2}\in C_{b}(\mathbb{R})$. As the
proof in checking (ii), we have 
\[
\partial_{x_{i}}\llbracket(\eta^{\prime\prime})^{1/2}\mathfrak{a}_{n}\rrbracket(u_{n})=(\eta^{\prime\prime}(u_{n}))^{1/2}\partial_{x_{i}}\llbracket\mathfrak{a}_{n}\rrbracket(u_{n}),
\]
and can assume that $\partial_{x_{i}}\llbracket(\eta^{\prime\prime})^{1/2}\mathfrak{a}_{n}\rrbracket(u_{n})$
converges weakly to $\partial_{x_{i}}\llbracket(\eta^{\prime\prime})^{1/2}\mathfrak{a}\rrbracket(u)$
in $L_{2}(\Omega_{T};L_{2}(D))$. Then, we also have the weakly convergence
in $L_{2}(\Omega_{T}\times D,\bar{\mu})$, where $\mathrm{d}\bar{\mu}\coloneqq\mathbf{1}_{B}\phi\mathrm{d}\mathbb{P}\otimes\mathrm{d}t\otimes\mathrm{d}x$,
which indicates
\[
\mathbb{E}\mathbf{1}_{B}\int_{t,x}\phi\eta^{\prime\prime}(u)|\nabla\llbracket\mathfrak{a}\rrbracket(u)|^{2}\leq\liminf_{n\rightarrow\infty}\mathbb{E}\mathbf{1}_{B}\int_{t,x}\phi\eta^{\prime\prime}(u_{n})|\nabla\llbracket\mathfrak{a}_{n}\rrbracket(u_{n})|^{2}.
\]
Therefore, taking inferior limit on \cref{eq:ito entorpy equality}
and using \cref{prop:Phi_n} and \cref{assu:Coefficients},
with the almost sure convergence and uniformly integrability of
$u_{n}$, we have that $u$ satisfies \cref{eq:entropy formulation}
almost surely. Therefore, $u$ is actually an entropy solution.

Now, we focus on the uniqueness. For $\bar{n}\in\mathbb{N}$, define
$\xi_{\bar{n}}\coloneqq\xi\wedge\bar{n}$ and denote by $u_{\bar{n}}$
the entropy solution of $\Pi(\Phi,\xi_{\bar{n}})$ constructed in
the proof of existence. From the construction of $u_{\bar{n}}$, with
 \cref{lem:Limit star property} and \cref{prop:u_n star property},
the entropy solution $u_{\bar{n}}$ has the $(\star)$-property. Let
$\tilde{u}$ be an entropy solution of $\Pi(\Phi,\tilde{\xi})$. Note that
\begin{align*}
&\mathbb{E}\Vert\tilde{u}(t,\cdot)-u_{\bar{n}}(t,\cdot)\Vert_{L_1(D)}\\
&=\mathbb{E}\Vert(\tilde{u}(t,\cdot)-u_{\bar{n}}(t,\cdot))^+\Vert_{L_1(D)}+\mathbb{E}\Vert(u_{\bar{n}}(t,\cdot)-\tilde{u}(t,\cdot))^+\Vert_{L_1(D)}.
\end{align*}
The estimate of the first part on the right hand side is obtained using \cref{thm:the first part} in the case that $u$ in $(\tilde{u}-u)^+$ has the $(\star)$-property. Similarly, the second part can be estimated using \cref{thm:the first part} in the case that $\tilde{u}$ in $(\tilde{u}-u)^+$ has the $(\star)$-property. Combining these two estimates, we have
\[
\underset{t\in[0,T]}{\mathrm{ess\,sup}}\,\mathbb{E}\Vert\tilde{u}(t,\cdot)-u_{\bar{n}}(t,\cdot)\Vert_{L_{1}(D)}\leq C\mathbb{E}\Vert\tilde{\xi}-\xi_{\bar{n}}\Vert_{L_{1}(D)},
\]
where the constant $C$ depends only on $N_{0}$, $K$, $d$, $T$
and $|D|$. Taking the limit $\bar{n}\rightarrow\infty$, we obtain \cref{thm:main theorem} (i).
 Following this method and using \cref{thm:the first part}
(i), we complete the proof of \cref{thm:main theorem} (iii).
\end{proof}

\section{Some auxiliary estimates}
\begin{lem}
\label{lem:initial value}Suppose $u$ is an entropy solution to the
Dirichlet problem $\Pi(\Phi,\xi)$. Under Assumptions \ref{assu:=00005CPhi}, \ref{assu:Coefficients} and \ref{assu:zero condition},
 if $\xi\in L_{m+1}(\Omega,\mathcal{F}_{0};L_{m+1}(D))$,
we have
\begin{align}
\lim_{\gamma\rightarrow0^+}\frac{1}{\gamma}\mathbb{E}\int_{0}^{\gamma}\int_{x}|u(t,x)-\xi(x)|^{2} & \mathrm{d}t=0.\label{eq:initial value}
\end{align}
\end{lem}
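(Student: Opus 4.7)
The plan is to prove the initial-value property by combining an energy-type upper bound with weak convergence at $t=0$, exploiting the identity $|u-\xi|^2 = |u|^2-2u\xi+|\xi|^2$ and evaluating each of the three pieces on the averaging window $[0,\gamma]$.

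For the weak-convergence piece, I would test the entropy inequality \cref{eq:entropy formulation} with the linear entropies $\eta(r)=\pm r\in\mathcal{E}$ and test functions $\phi(t,x)=V_\gamma(t)\varrho(x)$ where $\varrho\in C_c^\infty(D)$, and $V_\gamma\in C_c^\infty([0,T))$ smoothly approximates $\mathbf{1}_{[0,\gamma]}$ with $V_\gamma(0)=1$ and $-V_\gamma^{\prime}\approx\gamma^{-1}\mathbf{1}_{[0,\gamma]}$. Since $\eta^{\prime\prime}\equiv 0$, the dissipation and second-order noise terms disappear; taking expectation kills the stochastic integral by the martingale property. The remaining lower-order deterministic contributions (involving $\Phi(u)\Delta\varrho$, $b^i$, $f^i$, $F$, etc.) are $L_1(\Omega\times[0,2\gamma]\times D)$-dominated via the linear-growth parts of \cref{assu:Coefficients} and $u\in L_{m+1}(\Omega_T;L_{m+1}(D))$, so they vanish as $\gamma\to 0^+$ by dominated convergence. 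Combining the inequalities from $\pm r$ yields
\[
\lim_{\gamma\to 0^+}\gamma^{-1}\mathbb{E}\int_0^\gamma\!\int_D u(t,x)\varrho(x)\,dx\,dt=\mathbb{E}\int_D \xi\varrho\,dx \qquad \forall\varrho\in C_c^\infty(D).
\]

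For the energy piece, I would choose $\eta_L\in\mathcal{E}_0$ with $\eta_L(r)=r^2/2$ on $\{|r|\le L\}$, $\eta_L$ affine on $\{|r|\ge L+1\}$, and $0\le \eta_L(r)\le r^2/2$ everywhere, and test \cref{eq:entropy formulation} with $\phi(t,x)=V_\gamma(t)$ (allowed because $\eta_L\in\mathcal{E}_0$ accommodates $\varrho\equiv 1\in C^\infty(\overline D)$). Dropping the sign-favorable dissipation term $-\eta_L^{\prime\prime}(u)|\nabla\llbracket\mathfrak{a}\rrbracket(u)|^2\,\phi$ and letting $\gamma\to 0^+$ gives $\limsup_\gamma\gamma^{-1}\mathbb{E}\int_0^\gamma\!\int\eta_L(u)\le\mathbb{E}\int\eta_L(\xi)$. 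Running the same scheme with $\eta_L$ replaced by a truncation of $|r|^{m+1}/(m+1)\in\mathcal{E}_0$ produces the auxiliary bound $\limsup_\gamma\gamma^{-1}\mathbb{E}\int_0^\gamma\|u(t)\|_{L_{m+1}}^{m+1}\,dt\lesssim 1+\mathbb{E}\|\xi\|_{L_{m+1}}^{m+1}$. Since $u^2/2-\eta_L(u)\le u^2/2\cdot\mathbf{1}_{|u|>L}\le \tfrac12 L^{1-m}|u|^{m+1}$ and $m>1$, sending $L\to\infty$ yields
\[
\limsup_{\gamma\to 0^+}\gamma^{-1}\mathbb{E}\int_0^\gamma\|u(t,\cdot)\|_{L_2(D)}^2\,dt\le \mathbb{E}\|\xi\|_{L_2(D)}^2.
\]
The $L_2$ bound in turn lets me extend the weak convergence above from $C_c^\infty(D)$ to any $\varrho\in L_2(\Omega\times D;\mathcal{F}_0\otimes\mathcal{B}(D))$; taking $\varrho=\xi$ (which lies in $L_2$ because $\xi\in L_{m+1}$ with $m>1$) gives $\lim_\gamma\gamma^{-1}\mathbb{E}\int_0^\gamma\!\int u\xi=\mathbb{E}\|\xi\|_{L_2}^2$. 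Expanding $|u-\xi|^2$ then closes the argument:
\[
\limsup_{\gamma\to 0^+}\gamma^{-1}\mathbb{E}\int_0^\gamma\|u-\xi\|_{L_2}^2\,dt\le\mathbb{E}\|\xi\|_{L_2}^2-2\mathbb{E}\|\xi\|_{L_2}^2+\mathbb{E}\|\xi\|_{L_2}^2=0.
\]

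The main obstacle will be the truncation passage $L\to\infty$ in the energy step: it demands uniform-in-$\gamma$ control of $\gamma^{-1}\mathbb{E}\int_0^\gamma\!\int u^2\mathbf{1}_{|u|>L}$, and this is exactly where the preliminary $L_{m+1}$ energy inequality enters together with the assumption $m>1$ (via the decay factor $L^{1-m}\to 0$). A secondary technical point is verifying that the remainder terms $\eta_L^{\prime}(u)F$, $\eta_L^{\prime}(u)f_{x_i}^i$, $\llbracket f_{rx_i}^i\eta_L^{\prime}\rrbracket(x,u)$ and $\eta_L^{\prime\prime}(u)|\sigma_{x_i}^{ik}|^2$ are genuinely $o(1)$ as $\gamma\to 0^+$; this follows from the linear-growth bounds in \cref{assu:Coefficients}, the $L_{m+1}$-integrability of $u$, and dominated convergence on the shrinking interval $[0,2\gamma]$.
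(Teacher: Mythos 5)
Your route is genuinely different from the paper's: you expand $|u-\xi|^2=u^2-2u\xi+\xi^2$ and handle the cross term by a weak-convergence argument at $t=0$ and the quadratic terms by an energy estimate, whereas the paper performs a Kruzhkov-style convolution, testing the entropy inequality against $\eta_\delta(\cdot-z)$ and then replacing $z$ by $\xi(y)$ by convolution, with an additional boundary-layer energy estimate (the cutoff $w_\varepsilon$) to control the part of the spatial mollifier that escapes $D$. Your scheme avoids the spatial mollification and the ensuing boundary bookkeeping, and the energy pieces (testing with $\eta_L\in\mathcal{E}_0$ and $\phi$ constant in $x$, and with an $|r|^{m+1}$-type truncation) are sound; the uniform-in-$L$ domination of the lower-order error terms by $1+|u|^{m+1}\in L_1$ and the $L^{1-m}$ decay for the truncation tail both check out.

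There is, however, a genuine gap in the weak-convergence step. Testing the entropy inequality for $\eta(r)=\pm r$ against \emph{deterministic} $\varrho\in C_c^\infty(D)$ and taking expectations produces, after letting $\gamma\to 0^+$, the convergence $\gamma^{-1}\int_0^\gamma \mathbb{E}[u(t)]\,\mathrm{d}t\rightharpoonup\mathbb{E}[\xi]$ weakly in $L_2(D)$. This is a statement about the \emph{mean} of $u$ only; it does not, by density in $L_2(\Omega\times D)$, upgrade to $\gamma^{-1}\mathbb{E}\int_0^\gamma\int_D u\xi\to\mathbb{E}\|\xi\|^2_{L_2(D)}$, because the latter involves the joint law of $u$ and $\xi$. (For instance, if $u(t)$ had the same mean as $\xi$ but were decorrelated from it, your claimed limit would fail while the deterministic-test-function convergence still held.) To make your argument work you must first test the \emph{almost-sure} entropy inequality against $\mathcal{F}_0$-measurable weights: multiply the a.s. inequality \cref{eq:entropy formulation} by a bounded nonnegative $\mathcal{F}_0$-measurable $X$ before taking expectations, noting that the stochastic integral retains zero expectation because $X$ is $\mathcal{F}_0$-measurable. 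This yields the two-sided estimate
\[
\Bigl|\,\mathbb{E}\Bigl[X\Bigl(\gamma^{-1}\int_0^\gamma\!\!\int_D u\varrho\,\mathrm{d}x\,\mathrm{d}t-\int_D\xi\varrho\,\mathrm{d}x\Bigr)\Bigr]\,\Bigr|\le\mathrm{err}(\gamma)\xrightarrow{\gamma\to0^+}0,
\]
and only then does density of the linear span of $\{X\varrho\}$ in $L_2(\Omega\times D;\mathcal{F}_0\otimes\mathcal{B}(D))$, combined with your uniform $L_2$-bound on $\gamma^{-1}\int_0^\gamma u\,\mathrm{d}t$, justify taking the test function to be $\xi$ itself. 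This is exactly the randomness issue that the paper sidesteps by the Kruzhkov convolution device (plugging $z=\xi(y)$ by convolution), which is why the paper's argument needs only deterministic test functions. With the $\mathcal{F}_0$-weighted step inserted, your proof closes.
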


\begin{proof}
From the partition of unity in \Cref{sec:-property-and--esitmate}, we
can fix $i\in\{0,1,\ldots,N\}$ and define $B\coloneqq B_{i}$, $\psi\coloneqq\psi_{i}$
and $\varrho_{\varepsilon}\coloneqq\varrho_{\varepsilon,i}$. Notice
that $\text{dist}(\text{supp}\,\psi,\partial B)>0$. When $\varepsilon$
is small enough, we have $\text{supp}\,\varrho_{\varepsilon}(\cdot-x)\subset B$
for all $x\in\text{supp}\,\psi$. Then, from the definition of $\varrho_{\varepsilon}$,
we have $\psi(\cdot)\varrho_{\varepsilon}(y-\cdot)\in C_{c}^{\infty}(D)$
for all $y\in\overline{D}$ and sufficient small $\varepsilon>0$.
Now, we only need to prove
\[
\lim_{\gamma\rightarrow0^+}\frac{1}{\gamma}\mathbb{E}\int_{0}^{\gamma}\int_{x}|u(t,x)-\xi(x)|^{2}\psi(x)\mathrm{d}t=0.
\]
We split it into three parts
\begin{align}
 & \frac{1}{\gamma}\mathbb{E}\int_{0}^{\gamma}\int_{x}|u(t,x)-\xi(x)|^{2}\psi(x)\mathrm{d}t\label{eq:split into three}\\
 & \leq2\mathbb{E}\int_{D}\int_{\mathbb{R}^{d}}|\bar{\xi}(y)-\xi(x)|^{2}\psi(x)\varrho_{\varepsilon}(y-x)\mathrm{d}y\mathrm{d}x\nonumber \\
 & \quad+\frac{2}{\gamma}\mathbb{E}\int_{0}^{\gamma}\int_{D}\int_{D}|u(t,x)-\bar{\xi}(y)|^{2}\psi(x)\varrho_{\varepsilon}(y-x)\mathrm{d}y\mathrm{d}x\mathrm{d}t\nonumber \\
 & \quad+\frac{2}{\gamma}\mathbb{E}\int_{0}^{\gamma}\int_{D}\int_{D_{y,\varepsilon}^{1}}|u(t,x)|^{2}\psi(x)\varrho_{\varepsilon}(y-x)\mathrm{d}y\mathrm{d}x\mathrm{d}t,\nonumber 
\end{align}
where $\gamma\in[0,T)$ and $D_{y,\varepsilon}^{1}\coloneqq\{y\in\mathbb{R}^{d}:\varrho_{\varepsilon}(y-x)>0,\ \exists x\in D\}\setminus D$.
See \cref{eq:xi bar} for the definition of $\bar{\xi}$. Notice
that $|D_{y,\varepsilon}^{1}|=O(\varepsilon).$ We first estimate
the third term on the right hand side as
\begin{align}
 & \frac{2}{\gamma}\mathbb{E}\int_{0}^{\gamma}\int_{D}\int_{D_{y,\varepsilon}^{1}}|u(t,x)|^{2}\psi(x)\varrho_{\varepsilon}(y-x)\mathrm{d}y\mathrm{d}x\mathrm{d}t\label{eq:u^2 in small domain}\\
 & =\frac{2}{\gamma}\mathbb{E}\int_{0}^{\gamma}\int_{D_{x,\varepsilon}^{2}}\int_{D_{y,\varepsilon}^{1}}|u(t,x)|^{2}\psi(x)\varrho_{\varepsilon}(y-x)\mathrm{d}y\mathrm{d}x\mathrm{d}t\nonumber \\
 & \leq C\underset{t\in(0,\gamma)}{\mathrm{ess\,sup}}\,\mathbb{E}\int_{D_{x,\varepsilon}^{2}}|u(t,x)|^{2}\mathrm{d}x,\nonumber 
\end{align}
where $D_{x,\varepsilon}^{2}\coloneqq\{x\in D:\varrho_{\varepsilon}(y-x)>0,\ \exists y\in D_{y,\varepsilon}^{1}\}$.
We also have $|D_{x,\varepsilon}^{2}|=O(\varepsilon)$. Now, we choose
a non-negative function $w_{\varepsilon}\in C_{c}^{\infty}(\mathbb{R}^{d})$
such that $w_{\varepsilon}(x)=1$ for all $x\in D_{x,\varepsilon}^{2}$
and $|\text{supp}\,w_{\varepsilon}|\leq C\varepsilon$. Suppose that
$s\in(0,\gamma)$ is a Lebesgue point of the function 
\[
t\mapsto\mathbb{E}\int_{x}|u(t,x)|^{2}w_{\varepsilon}(x),
\]
and $\theta\in(0,T-\gamma)$. Define $V_{(\theta)}:[0,T]\rightarrow\mathbb{R}^{+}$
by $V_{(\theta)}(0)\coloneqq1$ and $V_{(\theta)}^{\prime}\coloneqq-\theta^{-1}\mathbf{1}_{[s,s+\theta)}$.
We take a sequence of non-negative functions $\varphi_{\theta,n}\in C_{c}^{\infty}([0,T))$
satisfying 
\[
\Vert\varphi_{\theta,n}\Vert_{L_{\infty}(0,T)}\vee\Vert\partial_{t}\varphi_{\theta,n}\Vert_{L_{1}(0,T)}\leq1,
\]
such that
\[
\lim_{n\rightarrow\infty}\Vert\varphi_{\theta,n}-V_{(\theta)}\Vert_{H^{1}(0,T)}=0.
\]
For each $\delta>0$, define $\eta_{\delta}\in C^{2}(\mathbb{R})$
by
\begin{equation}
\eta_{\delta}(0)=\eta_{\delta}^{\prime}(0)=0,\quad\eta_{\delta}^{\prime\prime}(r)\coloneqq2\cdot\mathbf{1}_{[0,\delta^{-1})}(|r|)+(-|r|+\delta^{-1}+2)\cdot\mathbf{1}_{[\delta^{-1},\delta^{-1}+2)}(|r|).\label{eq:new eta}
\end{equation}
It is easy to find that $\eta_{\delta}(r)\rightarrow r^{2}$ as $\delta\rightarrow0^{+}$.
Using the entropy inequality \cref{eq:entropy formulation} for $u$
with $\eta=\eta_{\delta}\in\mathcal{E}_{0}$ and $\phi_{n}(t,x)=\varphi_{\theta,n}(t)w_{\varepsilon}(x),$
letting $n\rightarrow\infty$ and taking expectations, with 
\cref{assu:=00005CPhi} and \cref{eq:sigma x}-\cref{eq:F} in 
\cref{assu:Coefficients}, we have
\begin{align*}
 & \frac{1}{\theta}\int_{s}^{s+\theta}\mathbb{E}\int_{D}\eta_{\delta}(u)w_{\varepsilon}(x)\mathrm{d}x\mathrm{d}t\\
 & \leq\mathbb{E}\int_{D}\eta_{\delta}(\xi)w_{\varepsilon}(x)\mathrm{d}x+\mathbb{E}\int_{0}^{s+\theta}\int_{D}\eta_{\delta}^{\prime\prime}(u)|\nabla\llbracket\mathfrak{a}\rrbracket(u)|^{2}w_{\varepsilon}(x)\mathrm{d}x\mathrm{d}t\\
 & \quad+C\mathbb{E}\int_{0}^{s+\theta}\int_{D}(1+|u|^{m+1})\Big(\sum_{i,j}|\partial_{x_{i}x_{j}}w_{\varepsilon}|+\sum_{i}|\partial_{x_{i}}w_{\varepsilon}|+w_{\varepsilon}\Big)\mathrm{d}x\mathrm{d}t.
\end{align*}
Taking $\delta\rightarrow0^{+}$ and then letting $\theta\rightarrow0^{+}$,
we have
\begin{align*}
 & \mathbb{E}\int_{D}|u(s,x)|^{2}w_{\varepsilon}(x)\mathrm{d}x\mathrm{d}t\\
 & \leq\mathbb{E}\int_{D}|\xi(x)|^{2}w_{\varepsilon}(x)\mathrm{d}x+2\mathbb{E}\int_{0}^{s}\int_{D}|\nabla\llbracket\mathfrak{a}\rrbracket(u)|^{2}w_{\varepsilon}(x)\mathrm{d}x\mathrm{d}t\\
 & \quad+C\mathbb{E}\int_{0}^{s}\int_{D}(1+|u|^{m+1})\Big(\sum_{i,j}|\partial_{x_{i}x_{j}}w_{\varepsilon}|+\sum_{i}|\partial_{x_{i}}w_{\varepsilon}|+w_{\varepsilon}\Big)\mathrm{d}x\mathrm{d}t
\end{align*}
hold for almost all $s\in(0,\gamma)$. Combining \cref{eq:u^2 in small domain}
and taking $\gamma\rightarrow0^+$, we have
\begin{align}
 & \limsup_{\gamma\rightarrow0^+}\frac{2}{\gamma}\mathbb{E}\int_{0}^{\gamma}\int_{D}\int_{D_{y,\varepsilon}^{1}}|u(t,x)-\bar{\xi}(y)|^{2}\psi(x)\varrho_{\varepsilon}(y-x)\mathrm{d}y\mathrm{d}x\mathrm{d}t\label{eq:second}\\
 & \leq C\limsup_{\gamma\rightarrow0^+}\underset{t\in(0,\gamma)}{\mathrm{ess\,sup}}\,\mathbb{E}\int_{D_{x,\varepsilon}^{2}}|u(t,x)|^{2}\mathrm{d}x\leq C\mathbb{E}\int_{D\cap\text{supp}\,w_{\varepsilon}}|\xi|^{2}\mathrm{d}x.\nonumber 
\end{align}

Now, we estimate the second term on the right hand side of \cref{eq:split into three}.
For $\gamma\in[0,T]$, choose a decreasing, non-negative function
$\varpi\in C^{\infty}([0,T])$ such that
\[
\varpi(0)=2,\quad\varpi\leq2\cdot\mathbf{1}_{[0,2\gamma]},\quad\partial_{t}\varpi\leq-\frac{1}{\gamma}\cdot\mathbf{1}_{[0,\gamma]}.
\]
Note that $\psi(\cdot)\varrho_{\varepsilon}(y-\cdot)\in C_{c}^{\infty}(D)$
for all $y\in\overline{D}$ and sufficient small $\varepsilon>0$.
For fixed $(y,z)\in D\times\mathbb{R}$, using the entropy inequality
\cref{eq:entropy formulation} with $\phi(t,x)=\varpi(t)\psi(x)\varrho_{\varepsilon}(y-x)$
and $\eta(r)=\eta_{\delta}(r-z)$ defined in \cref{eq:new eta},
with Assumptions \ref{assu:=00005CPhi} and \ref{assu:Coefficients},
we have
\begin{align*}
 & -\int_{t,x}\eta_{\delta}(u-z)\partial_{t}\varpi(t)\psi(x)\varrho_{\varepsilon}(y-x)\\
 & \leq2\int_{x}\eta_{\delta}(\xi-z)\psi(x)\varrho_{\varepsilon}(y-x)+\frac{C}{\varepsilon^{2}}\int_{t,x}(1+|u|^{m+1}+|z|^{m+1})\varpi(t)\\
 & \quad-\int_{t,x}\eta_{\delta}^{\prime\prime}(u-z)|\nabla_{x}\llbracket\mathfrak{a}\rrbracket(u)|^{2}\phi+\int_{0}^{T}\int_{x}\Big(\eta_{\delta}^{\prime}(u-z)\phi\sigma_{x_{i}}^{ik}(x,u)\\
 & \quad-\llbracket\sigma_{rx_{i}}^{ik}\eta_{\delta}^{\prime}(\cdot-z)\rrbracket(x,u)\phi-\llbracket\sigma_{r}^{ik}\eta_{\delta}^{\prime}(\cdot-z)\rrbracket(x,u)\partial_{x_{i}}\phi\Big)\mathrm{d}W^{k}(t).
\end{align*}
Notice that all the terms are continuous in $z\in\mathbb{R}$. Replacing
$z$ by $\xi(y)$ by convolution, taking expectations, integrating
over $y\in D$ and taking $\delta\rightarrow0^{+},\gamma\rightarrow0^+$,
with the definition of $\varpi$, we have
\begin{align}
 & \limsup_{\gamma\rightarrow0^+}\frac{1}{\gamma}\int_{0}^{\gamma}\mathbb{E}\int_{x,y}|u(t,x)-\xi(y)|^{2}\psi(x)\varrho_{\varepsilon}(y-x)\mathrm{d}t\label{eq:third}\\
 & \leq2\mathbb{E}\int_{x,y}|\xi(x)-\xi(y)|^{2}\psi(x)\varrho_{\varepsilon}(x-y).\nonumber 
\end{align}
Combining \cref{eq:split into three} with \cref{eq:second}-\cref{eq:third},
we have
\begin{align*}
 & \limsup_{\gamma\rightarrow0^+}\frac{1}{\gamma}\mathbb{E}\int_{0}^{\gamma}\int_{x}|u(t,x)-\xi(x)|^{2}\psi(x)\mathrm{d}t\\
 & \leq2\mathbb{E}\int_{D}\int_{\mathbb{R}^{d}}|\bar{\xi}(y)-\xi(x)|^{2}\psi(x)\varrho_{\varepsilon}(y-x)\mathrm{d}y\mathrm{d}x\\
 & \quad+4\mathbb{E}\int_{x,y}|\xi(x)-\xi(y)|^{2}\psi(x)\varrho_{\varepsilon}(x-y)+C\mathbb{E}\int_{D\cap\text{supp}\,w_{\varepsilon}}|\xi|^{2}\mathrm{d}x.
\end{align*}
Since $\xi\in L_{m+1}(\Omega\times D)$, the right hand side goes
to $0$ as $\varepsilon\rightarrow0^+$. The proof is complete.
\end{proof}
\begin{lem}
\label{lem:approximation}Let $u\in L_{m+1}(\Omega\times D_{T})$
and for some $\varepsilon\in(0,1)$. Denote by $\tilde{K}$ the constant in
 \cref{rem:support of rho}. Let $\varrho:\mathbb{R}^{d}\rightarrow\mathbb{R}$
be a non-negative function, integrating to $1$ and supported on a
ball of radius $\tilde{K}\varepsilon$ centered at the origin. Under 
\cref{assu:=00005CPhi}, one has
\begin{align}
 & \mathbb{E}\int_{t,x,y}|u(t,x)-u(t,y)|\varrho(x-y)\label{eq:approximation in u}\\
 & \leq C\varepsilon^{\frac{1}{m+1}}\mathbb{E}(1+\Vert u\Vert_{L_{m+1}(D_{T})}^{m+1}+\Vert\nabla\llbracket\mathfrak{a}\rrbracket(u)\Vert_{L_{1}(D_{T})}),\nonumber 
\end{align}
\begin{align}
 & \mathbb{E}\int_{t,x,y}|\Phi(u(t,x))-\Phi(u(t,y))|\varrho(x-y)\label{eq:approximation in Phi}\\
 & \leq C\varepsilon^{\frac{1}{m+1}}\mathbb{E}(1+\Vert u\Vert_{L_{m+1}(D_{T})}^{m+1}+\Vert\nabla\llbracket\mathfrak{a}\rrbracket(u)\Vert_{L_{2}(D_{T})}^{2}),\nonumber 
\end{align}
where $C$ depends on $d$, $K$, $\tilde{K}$ and $T$.
\end{lem}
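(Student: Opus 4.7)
The plan is to reduce both inequalities to the standard mollification bound
\[
\int_{D}\int_{D}|v(x)-v(y)|\varrho(x-y)\,\mathrm{d}x\,\mathrm{d}y\leq\tilde K\varepsilon\,\Vert\nabla v\Vert_{L_{1}(D)}
\]
applied to $v\coloneqq\llbracket\mathfrak{a}\rrbracket(u)$, and then to use \cref{assu:=00005CPhi} to translate increments of $v$ into increments of $u$ or $\Phi(u)$. Note that \cref{def:entropy-solution} (ii) with $f\equiv 1$ yields $v(\omega,t,\cdot)\in H_0^1(D)$ almost surely, so the zero-extension $\bar v$ of $v$ belongs to $H^1(\mathbb{R}^d)$ with $\Vert\nabla\bar v\Vert_{L_1(\mathbb{R}^d)}=\Vert\nabla v\Vert_{L_1(D)}$. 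This makes the path-integral representation $v(x)-v(y)=\int_0^1\nabla\bar v(x+s(y-x))\cdot(y-x)\,\mathrm{d}s$ legitimate even when the segment from $x$ to $y$ exits $D$, so the mollification estimate above holds without any convexity assumption on $D$.

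For \cref{eq:approximation in u}, I split the integrand by whether $|u(t,x)|\vee|u(t,y)|\geq 1$ or $<1$. In the first region the last line of \cref{eq:a K} gives $|u(x)-u(y)|\le K|v(x)-v(y)|$, and the corresponding contribution is bounded by $C\varepsilon\,\mathbb{E}\Vert\nabla v\Vert_{L_1(D_T)}\le C\varepsilon^{1/(m+1)}\mathbb{E}\Vert\nabla v\Vert_{L_1(D_T)}$ since $\varepsilon<1$. In the second region the same line gives $|u(x)-u(y)|\le K^{2/(m+1)}|v(x)-v(y)|^{2/(m+1)}$; H\"older's inequality in $(t,x,y)$ with exponents $(m+1)/2$ and $(m+1)/(m-1)$, together with Jensen applied to $\mathbb{E}$, produces a bound of the form $C\varepsilon^{2/(m+1)}\bigl(\mathbb{E}\Vert\nabla v\Vert_{L_1(D_T)}\bigr)^{2/(m+1)}$. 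A final Young inequality with the same conjugate pair turns this into $C\varepsilon^{1/(m+1)}\bigl(1+\mathbb{E}\Vert\nabla v\Vert_{L_1(D_T)}\bigr)$, which is the claimed estimate.

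For \cref{eq:approximation in Phi}, I rewrite $\Phi(u(x))-\Phi(u(y))=\int_{u(y)}^{u(x)}\mathfrak{a}^2(\tau)\,\mathrm{d}\tau$ and pull one factor of $\mathfrak{a}$ out to obtain $|\Phi(u(x))-\Phi(u(y))|\le\sup_{\tau}|\mathfrak{a}(\tau)|\cdot|v(x)-v(y)|$, where the supremum is over $\tau$ between $u(x)$ and $u(y)$. Integrating $|\mathfrak{a}'(r)|\le K|r|^{(m-3)/2}$ from $0$ gives $|\mathfrak{a}(\tau)|\le C(1+|\tau|^{(m-1)/2})$, so the supremum is bounded by $C(1+(|u(x)|\vee|u(y)|)^{(m-1)/2})$. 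The constant part contributes $\le C\varepsilon\,\mathbb{E}\Vert\nabla v\Vert_{L_1(D_T)}\le C\varepsilon^{1/(m+1)}(1+\mathbb{E}\Vert\nabla v\Vert_{L_2(D_T)}^2)$ by Cauchy--Schwarz, H\"older in $t$, and $\varepsilon<1$. For the $|u|^{(m-1)/2}$-weighted part, substituting $h=y-x$, using the path-integral representation of $v$, and applying Cauchy--Schwarz in $x$ together with $(|u(x)|\vee|u(x+h)|)^{m-1}\le|u(x)|^{m-1}+|u(x+h)|^{m-1}$ and translation invariance of the $L_{m-1}$-norm of the zero-extension of $u$, I obtain an inner bound of $C\Vert u(t)\Vert_{L_{m-1}(D)}^{(m-1)/2}\Vert\nabla v(t)\Vert_{L_{2}(D)}$. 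A further Cauchy--Schwarz in $(\omega,t)$, Young's inequality, and the elementary embedding $\Vert u\Vert_{L_{m-1}}^{m-1}\le C(1+\Vert u\Vert_{L_{m+1}}^{m+1})$ produce the required right-hand side.

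The only mild subtlety is the path-integral representation of $v$ when the segment joining $x$ to $y$ leaves $D$, which is resolved once and for all by the zero-extension afforded by $v\in H_0^1(D)$. I expect the most arithmetically delicate step to be the H\"older--Young juggling at the end of the second paragraph that converts $\varepsilon^{2/(m+1)}\bigl(\mathbb{E}\Vert\nabla v\Vert_{L_1}\bigr)^{2/(m+1)}$ into $\varepsilon^{1/(m+1)}(1+\mathbb{E}\Vert\nabla v\Vert_{L_1})$; this is where the sharp exponent $1/(m+1)$ in the statement comes from, and it is the reason one cannot simply use the Lipschitz bound from \cref{eq:a K} throughout.
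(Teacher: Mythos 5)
Your proof is correct but takes a genuinely different route from the paper's. The paper never invokes $H_0^1$-regularity of $v=\llbracket\mathfrak{a}\rrbracket(u)$: instead of zero-extending $v$, it defines the inner shell $D_{-\varepsilon}=\{y\in D: y+z\in D,\ \forall |z|\le\tilde K\varepsilon\}$, uses the path-integral representation only for $y\in D_{-\varepsilon}$ (where the segment stays inside $D$), and controls the boundary strip $D\setminus D_{-\varepsilon}$ by H\"older against the small measure $|D\setminus D_{-\varepsilon}|=O(\varepsilon)$. In the paper that boundary-strip estimate is where the exponent $1/(m+1)$ comes from (after combining with the $(\cdot)^{2/(m+1)}$ in \cref{eq:u-u varrho to a-a}), whereas in your argument it comes from the crude replacements $\varepsilon^{2/(m+1)}\le\varepsilon^{1/(m+1)}$ (valid since $\varepsilon<1$) and $Z^{2/(m+1)}\le 1+Z$. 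Your zero-extension approach is cleaner and yields the global mollification bound in one stroke, but it tacitly assumes $v(\omega,t,\cdot)\in H_0^1(D)$ a.s., which is not part of the hypotheses of \cref{lem:approximation} as stated (the lemma only assumes $u\in L_{m+1}(\Omega\times D_T)$, with the gradient norm appearing only on the right-hand side); the paper's version therefore applies to any $u$ with $\nabla\llbracket\mathfrak{a}\rrbracket(u)$ merely in the appropriate $L_p$, without a trace condition. In the paper's actual applications the $H_0^1$-regularity is supplied by \cref{def:entropy-solution} (ii), so your argument does carry through there; it is just worth flagging that this is an extra hypothesis you are sneaking in, and one that your argument genuinely needs (the zero extension of an $H^1(D)\setminus H_0^1(D)$ function is not $W^{1,1}(\mathbb{R}^d)$). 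The rest of your estimates — the two-case use of the last line of \cref{eq:a K}, the bound $|\mathfrak{a}(\tau)|\le C(1+|\tau|^{(m-1)/2})$ from integrating $|\mathfrak{a}'|$, the H\"older/Jensen/Young bookkeeping — are all sound and match the paper's targets.
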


\begin{proof}
We first prove \cref{eq:approximation in Phi}. Define a set
\[
D_{-\varepsilon}\coloneqq\{y\in D:y+z\in D,\ \text{for all }z\in\mathbb{R}^{d}\ \text{with }|z|\leq \tilde{K}\varepsilon\}.
\]
Notice that $|D\setminus D_{-\varepsilon}|=O(\varepsilon)$, combining
with \cref{assu:=00005CPhi} and H\"{o}lder's inequality,
we have
\begin{align}
 & \mathbb{E}\int_{t,x,y}|\Phi(u(t,x))-\Phi(u(t,y))|\varrho(x-y)\label{eq:Phi-Phi with varrho}\\
 & \leq\mathbb{E}\int_{t}\int_{D_{-\varepsilon}}\int_{\mathbb{R}^{d}}|\Phi(u(t,y+z))-\Phi(u(t,y))|\varrho(z)\mathrm{d}z\mathrm{d}y\nonumber\\
 & \quad+\mathbb{E}\int_{t,x}\int_{D\setminus D_{-\varepsilon}}|\Phi(u(t,x))-\Phi(u(t,y))|\varrho(x-y)\mathrm{d}y\nonumber\\
 & \leq C\varepsilon\mathbb{E}\int_{t}\int_{D_{-\varepsilon}}\int_{\mathbb{R}^{d}}\varrho(z)\int_{0}^{1}|\mathfrak{a}(u)\nabla\llbracket\mathfrak{a}\rrbracket(u)|(y+\lambda z)\mathrm{d}\lambda\mathrm{d}z\mathrm{d}y\nonumber\\
 & \quad+\Big(\mathbb{E}\int_{t,x}\int_{D\setminus D_{-\varepsilon}}|\Phi(u(t,x))-\Phi(u(t,y))|^{\frac{m+1}{m}}\varrho(x-y)\mathrm{d}y\Big)^{\frac{m}{m+1}}\nonumber\\
 & \quad\cdot\Big(\mathbb{E}\int_{t,x}\int_{D\setminus D_{-\varepsilon}}\varrho(x-y)\mathrm{d}y\Big)^{\frac{1}{m+1}}\nonumber\\
 & \leq C\varepsilon^{\frac{1}{m+1}}\mathbb{E}(1+\Vert u\Vert_{L_{m+1}(D_{T})}^{m+1}+\Vert\nabla\llbracket\mathfrak{a}\rrbracket(u)\Vert_{L_{2}(D_{T})}^{2}).\nonumber
\end{align}

Moreover, using \cref{assu:=00005CPhi} as in the proof of
\cite[Lemma 3.1]{dareiotis2019entropy},
we have 
\begin{align}
&\mathbb{E}\int_{t,x,y}|u(t,x)-u(t,y)|\varrho(x-y)\label{eq:u-u varrho to a-a}\\
&\leq C\mathbb{E}\int_{t,x,y}\mathbf{1}_{|u(t,x)|\vee|u(t,y)|\geq 1}|\llbracket\mathfrak{a}\rrbracket(u(t,x))-\llbracket\mathfrak{a}\rrbracket(u(t,y))|\varrho(x-y)\nonumber\\
&\quad+C\Big(\mathbb{E}\int_{t,x,y}\mathbf{1}_{|u(t,x)|\vee|u(t,y)|\leq 1}|\llbracket\mathfrak{a}\rrbracket(u(t,x))-\llbracket\mathfrak{a}\rrbracket(u(t,y))|\varrho(x-y)\Big)^\frac{2}{m+1}.\nonumber
\end{align}
Besides, as \cref{eq:Phi-Phi with varrho}, we have
\begin{align*}
&\mathbb{E}\int_{t,x,y}|\llbracket\mathfrak{a}\rrbracket(u(t,x))-\llbracket\mathfrak{a}\rrbracket(u(t,y))|\varrho(x-y)\\
&\leq C\varepsilon\mathbb{E}\int_{t}\int_{D_{-\varepsilon}}\int_{\mathbb{R}^d}\varrho(z)\int_{0}^{1}|\nabla\llbracket\mathfrak{a}\rrbracket(u)(y+\lambda z)|\mathrm{d}\lambda\mathrm{d}z\mathrm{d}y\nonumber\\
 & \quad+\Big(\mathbb{E}\int_{t,x}\int_{D\setminus D_{-\varepsilon}}|\llbracket\mathfrak{a}\rrbracket(u(t,x))-\llbracket\mathfrak{a}\rrbracket(u(t,y))|^2\varrho(x-y)\mathrm{d}y\Big)^{\frac{1}{2}}\nonumber\\
 & \quad\cdot\Big(\mathbb{E}\int_{t,x}\int_{D\setminus D_{-\varepsilon}}\varrho(x-y)\mathrm{d}y\Big)^{\frac{1}{2}}\nonumber\\
 & \leq C\varepsilon^{\frac{1}{2}}\mathbb{E}(1+\Vert u\Vert_{L_{m+1}(D_{T})}^{m+1}+\Vert\nabla\llbracket\mathfrak{a}\rrbracket(u)\Vert_{L_{2}(D_{T})}^{2}).\nonumber
\end{align*}
Combining with \cref{eq:u-u varrho to a-a}, we obtain inequality \cref{eq:approximation in u}.
\end{proof}

\begin{acknowledgement*}
	The research of K.~Du was partially supported by National Natural Science
	Foundation of China (12222103), by National Key R{\&}D Program of China (2018YFA0703900), 
	and by Natural Science Foundation of Shanghai (20ZR1403600).
\end{acknowledgement*}
\bibliographystyle{amsalpha}
\bibliography{bi}

\end{document}